\DeclareMathAlphabet{\mathpzc}{OT1}{pzc}{m}{it}
\newcommand{\EO}[1]{{\color{black}#1}}
\DeclareSymbolFont{fouriersymbols}{FMS}{futm}{m}{n}
\DeclareSymbolFont{fourierlargesymbols}{FMX}{futm}{m}{n}
\DeclareMathDelimiter{\VERT}{\mathord}{fouriersymbols}{152}{fourierlargesymbols}{147}
\journalname{JOTA}
\begin{document}

\title{\EO{An Optimal Control Problem for the Navier--Stokes Equations with Point Sources}}

%\subtitle{Using  the  LaTex Template}
%\subtitle{Optimal control for the Navier--Stokes system}

\author{Francisco Fuica, Felipe Lepe, Enrique Ot\'arola, Daniel Quero}

\institute{Francisco Fuica, Enrique Ot\'arola (corresponding author), Daniel Quero  \at
             Universidad T\'ecnica Federico Santa Mar\'ia, Valpara\'iso, Chile\\
             francisco.fuica@sansano.usm.cl, enrique.otarola@usm.cl, daniel.quero@alumnos.usm.cl
           \and
			Felipe Lepe \at
             Universidad del B\'io B\'io, Concepci\'on, Chile\\
              flepe@ubiobio.cl
}

\date{Received: date / Accepted: date}
%The correct dates will be entered by the editor.

\maketitle

\begin{abstract}
We analyze, in two dimensions, an optimal control problem for the Navier--Stokes equations where the control variable corresponds to the amplitude of forces modeled as point sources; control constraints are also considered. This particular setting leads to solutions to the state equation exhibiting reduced regularity properties. We operate under the framework of Muckenhoupt weights, Muckenhoupt-weighted Sobolev spaces, and the corresponding weighted norm inequalities 
and derive the existence of optimal solutions and first and, necessary and sufficient, second order optimality conditions.
\end{abstract}
\keywords{Optimal control problems, Navier--Stokes equations, Dirac measures, Muckenhoupt weights, First and second order optimality conditions.}
\subclass{
35Q30,         % Navier-Stokes equations 
49J20,          % Existence theories for optimal control problems involving partial differential equations
49K20.         % Optimality conditions for problems involving partial differential equations
}

%All acknowledgements should be placed in the back of the paper after Conclusions..

\section{Introduction}\label{sec:intro}
\EO{The purpose of this paper is to study the existence of optimal solutions and first and, necessary and sufficient, second order optimality conditions for an optimal control problem that involves the stationary Navier--Stokes equations.} The control variable corresponds to the amplitude of  forces modeled as point sources supported at some prescribed points of the underlying spatial domain \EO{(Dirac measures)}; control constraints are also considered. The \EO{thus} singular control forcing appears in the right-hand side of the momentum equation. \EO{We notice that,} since Dirac measures are supported 
at points, and points have Lebesgue measure zero, the aforementioned optimization setting can be seen as an instance of \emph{sparse} PDE-constrained optimization \cite{MR2556849,MR2826983,MR3023751,MR2775195}
\EO{and finds} relevance in applications where one can specify the position of actuators at finitely many \EO{prespecified} points. \EO{We mention references \cite{MR2086168} and \cite{MR2525606} for applications within the context of the active control of sound and vibrations, respectively. Regarding analysis, we mention references \cite{MR3225501,MR3800041,MR3878607},} where the corresponding PDE-constrained optimization problem for when the state equation is a Poisson problem is considered. These references also design and analyze some suitable finite element discretizations. Extensions of the theory to the Stokes and semilinear elliptic equations have been recently investigated in \cite{MR4304887} and \cite{Otarola_semilinear_deltas}, respectively.

To the best of our knowledge, the only work available in the literature that considers an optimal control problem for the stationay Navier--Stokes equations with a control that is measure valued is \cite{MR3936891}. Under the assumption that the underlying domain $\Omega \subset \mathbb{R}^{2}$ is of class $C^{2}$, the authors \EO{derive the existence of local solutions for the corresponding optimal control problem and derive necessary and sufficient conditions for local optimality of controls}. In addition, on the basis of a suitable second order condition, the authors prove the stability of optimal states with respect to perturbations of the optimal control problem data.

In \EO{our work} we analyze an optimal control problem for the stationary Navier--Stokes equations with a control variable that corresponds \EO{to the amplitude of forces modeled as point sources. This setting leads to the first difficulty within our analysis: standard energy arguments do not apply to obtain suitable estimates and solutions to the Navier--Stokes equations exhibit reduced regularity properties. In order to deal with such a singular setting, we operate under the framework developed in \cite{MR3998864,MR3906341}, which is based on the theory of Muckenhoupt weights, Muckenhoupt-weighted Sobolev spaces, and weighted norm inequalities. A second difficulty within our analysis is the nonuniqueness of solutions to the Navier--Stokes equations. An assumption guaranteeing local uniqueness of the state equation around optimal controls is thus needed to derive first and second order optimality conditions \cite{MR2338434,MR3936891}. We thus operate under the framework of \emph{regular solutions} (see Definition \ref{def:reg_sol}) \cite{MR2338434,MR3936891,MR4395150}. Note that this framework is satisfied whenever a suitable smallness assumption on controls is fulfilled.} We provide a complete analysis for our optimal control problem that includes existence of optimal solutions (Theorem \ref{thm:existence_optimal_solution}), first order optimality conditions (Theorem \ref{thm:first_opt_cond}), and necessary and sufficient second order optimality conditions (Theorems \ref{thm:second_necess_suff_opt_cond} and \ref{thm:equivalent_opt_cond}). As instrumental results, we analyze \EO{a suitable linearization of the Navier--Stokes equations and the corresponding adjoint state equations in weighted spaces. We also analyze regularity properties for the solution to the adjoint equations. In addition to the difficulties that were previously mentioned, we have} to deal with the fact that solutions to the state and adjoint equations lie in different function spaces. The analysis that we provide thus requires fine properties of Muckenhoupt weights and embeddings between weighted and non-weighted spaces. This subtle intertwining of ideas is one of the highlights of our contribution.

\EO{The contents of our manuscript are organized as follows. In Sect. \ref{sec:model} we introduce the PDE-constrained optimization problem that is under consideration. We collect background information and the main assumptions under which we shall operate in Sect. \ref{sec:notation_and_prel}. Here, we also introduce the concept of \emph{regular solution} for the Navier--Stokes equations and prove that an operator associated to the linearization of such a system is an isomorphism on suitable weighted spaces. In Sect. \ref{sec:ocp} we introduce a weak formulation for our optimal control problem and prove the existence of solutions. Sect. \ref{sec:op_conditions} is dedicated to the analysis of optimality conditions:  we derive first and, necessary and sufficient, second order optimality conditions. We conclude our work with Sect. \ref{sec:conclusions}, where we provide a brief summary of the obtained results and comment on possible extensions.}

\vspace{-0.3cm}
%%%%%%%%%%%%%%%%%%%%%%%%%%%%%%%%%%%%%%%%%%%%%%%%%%%%%%%%%%%%
%%%%%%%%%%%%%%%%%%%%%%%%%%%%%%%%%%%%%%%%%%%%%%%%%%%%%%%%%%%%
%%%%%%%%%%%%%%%%%%%%%%%%%%%%%%%%%%%%%%%%%%%%%%%%%%%%%%%%%%%%
%%%%%%%%%%%%%%%%%%%%%%%%%%%%%%%%%%%%%%%%%%%%%%%%%%%%%%%%%%%%
%%%%%%%%%%%%%%%%%%%%%%%%%%%%%%%%%%%%%%%%%%%%%%%%%%%%%%%%%%%%
%%%%%%%%%%%%%%%%%%%%%%%%%%%%%%%%%%%%%%%%%%%%%%%%%%%%%%%%%%%%
%%%%%%%%%%%%%%%%%%%%%%%%%%%%%%%%%%%%%%%%%%%%%%%%%%%%%%%%%%%%
%%%%%%%%%%%%%%%%%%%%%%%%%%%%%%%%%%%%%%%%%%%%%%%%%%%%%%%%%%%%
\section{Statement of the Problem}
\label{sec:model}
To describe our problem, we let $\Omega\subset\mathbb{R}^2$ be an open and bounded \EO{domain} with Lipschitz boundary $\partial\Omega$ and let $\emptyset \neq \mathcal{D} \subset \Omega$ be a finite ordered set with cardinality $\#\mathcal{D} =: \ell$. Given a desired velocity \EO{field} $\mathbf{y}_\Omega\in \mathbf{L}^2(\Omega)$ and a regularization parameter $\eta>0$, we introduce the cost functional 
\begin{equation}\label{def:cost_functional}
J(\mathbf{y},\mathcal{U}):=\frac{1}{2}\|\mathbf{y}-\mathbf{y}_\Omega\|_{\mathbf{L}^2(\Omega)}^{2}+\frac{\eta}{2}\sum_{t\in\mathcal{D}}|\mathbf{u}_t|^2, 
\quad \mathcal{U}= (\mathbf{u}_{1},\ldots,\mathbf{u}_{\ell}),
\quad \mathbf{u}_{t} \in \mathbb{R}^2.
\end{equation} 

\EO{The PDE-constrained optimization problem under consideration reads as follows: Find $\min J(\mathbf{y},\mathcal{U})$ subject to the stationary Navier--Stokes equations}
\begin{equation}\label{def:state_eq}
-\nu\Delta \mathbf{y}+(\mathbf{y}\cdot\nabla)\mathbf{y}+\nabla p =   \sum_{t\in\mathcal{D}}\mathbf{u}_t \delta_t \text{ in } \Omega, 
\,\,\,\,\,
\text{div }\mathbf{y}=0 \text{ in } \Omega,
\,\,\,\,\,
\mathbf{y}=\mathbf{0}  \text{ on } \partial\Omega,
\end{equation}
and the control constraints 
\begin{equation}\label{def:box_constraints}
\mathcal{U}\in \mathbb{U}_{ad},
\
\mathbb{U}_{ad}:=\{\mathcal{V}=(\mathbf{v}_1,\ldots,\mathbf{v}_{\ell}) \in [\mathbb{R}^2]^{\ell}:  \mathbf{a}_t \leq \mathbf{v}_t \leq \mathbf{b}_t \text{ for all } t\in \mathcal{D} \},
\end{equation}
with $\mathbf{a}_t,\mathbf{b}_t \in \mathbb{R}^2$ satisfying $\mathbf{a}_t < \mathbf{b}_t$ for every $t\in\mathcal{D}$. We immediately comment that, 
throughout this work, vector inequalities must be understood componentwise and that $|\cdot|$ denotes the \EO{euclidean} norm in $\mathbb{R}^2$. In \eqref{def:state_eq}, $\mathbf{y}$ represents the velocity of the fluid, $p$ represents the pressure, $\nu>0$ denotes the kinematic viscosity, and $\delta_t$ corresponds to the Dirac delta supported at the interior point $t \in \mathcal{D}$.
\section{Notation and Preliminaries}\label{sec:notation_and_prel}

The main purpose of this section is to \EO{introduce} the main notation and \EO{recall} basic results which we shall use later on. 
%%%%%%%%%%%%%%%%%%%%%%%%%%%%%%%%%%%%%%%%%%%%%%%%%%%%%%%%%%%%
%%%%%%%%%%%%%%%%%%%%%%%%%%%%%%%%%%%%%%%%%%%%%%%%%%%%%%%%%%%%
%%%%%%%%%%%%%%%%%%%%%%%%%%%%%%%%%%%%%%%%%%%%%%%%%%%%%%%%%%%%
%%%%%%%%%%%%%%%%%%%%%%%%%%%%%%%%%%%%%%%%%%%%%%%%%%%%%%%%%%%%
%%
\subsection{Notation}\label{sec:notation}
\EO{Let $\mathfrak{X}$ be a Banach function space. We denote by $\mathfrak{X}'$, $\mathfrak{X}''$, and $\|\cdot\|_{\mathfrak{X}}$ the dual, the bidual, and the norm of $\mathfrak{X}$, respectively. Let $\{ x_n \}_{n\in \mathbb{N}}$ be a sequence in $\mathfrak{X}$.
We denote by $x_n \rightarrow x$ and $x_n \rightharpoonup x$ the strong and weak convergence, respectively, of $\{ x_n \}_{n\in \mathbb{N}}$ to $x$ in $\mathfrak{X}$.
We denote by $\langle \cdot, \cdot\rangle_{\mathfrak{X}',\mathfrak{X}}$ the duality pairing between $\mathfrak{X}'$ and $\mathfrak{X}$ and simply write $\langle \cdot, \cdot\rangle$ when $\mathfrak{X}'$ and $\mathfrak{X}$ are clear from the context. We write $\mathfrak{X}\hookrightarrow\mathfrak{Y}$ to denote that $\mathfrak{X}$ is continuously embedded in the Banach function space $\mathfrak{Y}$.}

\EO{Let $E \subset \mathbb{R}^2$ be a Lebesgue measurable set. We denote the Lebesgue measure of such a set by $|E|$ . For $f: E \rightarrow \Omega$, we set} 
\[
\fint_{E} f = \frac{1}{|E|} \int_E f.
\]
%We shall use standard notation for weighted Lebesgue and Sobolev spaces. 
%To denote vector-valued functions we shall use lowercase bold letters, whereas to denote function spaces we shall use uppercase bold letters. 
%For instance, for a bounded domain $G \subset \mathbb{R}^{d}$, with $d\in\{1,2\}$, we denote $\mathbf{L}^2(G)=[L^2(G)]^2$ and equip $\mathbf{L}^2(G)$ with the following inner product and norm, respectively,
%\begin{equation*}
%(\mathbf{w},\mathbf{v})_{\mathbf{L}^2(G)}=\int_G \mathbf{w}\cdot \mathbf{v}, 
%\qquad
%\|\mathbf{v}\|_{\mathbf{L}^2(G)}=(\mathbf{v},\mathbf{v})_{\mathbf{L}^2(G)}^{\frac{1}{2}}\qquad \forall\: \mathbf{w},\mathbf{v}\in \mathbf{L}^2(G).
%\end{equation*}
%
%Given $q \in (1,\infty)$, we denote by $q'$ its H\"older conjugate, i.e., the real number such that $1/q+ 1/q'= 1$. 

By $a \lesssim b$ we mean $a \leq C b$, with a positive constant $C$ \EO{that does not depend on either $a$ or $b$. The value of $C$ might change at each occurrence}. If the particular value of the constant $C$ is of relevance for our analysis, we will thus assign it a name.

%%%%%%%%%%%%%%%%%%%%%%%%%%%%%%%%%%%%%%%%%%%%%%%%%%%%%%%%%%%%
%%%%%%%%%%%%%%%%%%%%%%%%%%%%%%%%%%%%%%%%%%%%%%%%%%%%%%%%%%%%
%%%%%%%%%%%%%%%%%%%%%%%%%%%%%%%%%%%%%%%%%%%%%%%%%%%%%%%%%%%%
%%%%%%%%%%%%%%%%%%%%%%%%%%%%%%%%%%%%%%%%%%%%%%%%%%%%%%%%%%%%

\subsection{Muckenhoupt Weights}\label{sec:wse}

By a weight, we shall mean a locally integrable function $\omega$ on $\mathbb{R}^2$ such that $\omega(x)>0$ for a.e.~$x\in \mathbb{R}^2$. A special class of weights that will be of importance for our analysis is the so-called Muckenhoupt class $A_{2}$ \cite{Javier2001,Fabes_et_al1982,MR0293384,Turesson2000} .
\begin{definition}[Muckenhoupt Class $A_2$]\label{def:mucken_weight}
A weight $\omega$ belongs to the Muckenhoupt class $A_{2}$ if
\begin{equation*}
[\omega]_{A_2} :=
\sup_{B}\left( \fint_B\omega \right)\left(\fint_B\omega^{-1} \right) < \infty,
\end{equation*}
where the supremum is taken over all balls $B$ in $\mathbb{R}^{2}$. We call $[\omega]_{A_2}$ the Muckenhoupt characteristic of $\omega$.
\end{definition}

\EO{We refer the interested reader to \cite{Javier2001,Fabes_et_al1982,MR0293384,Turesson2000} for basic facts about the Muckenhoupt class $A_{2}$}. To present prototypical examples of Muckenhoupt weights, we let $\mathcal{K}$ be a smooth compact submanifold of dimension $k \in \{ 0,1\}$ and define $\mathrm{d}_{\mathcal{K}}^{\alpha}(x):= \mathrm{dist}(x,\mathcal{K})^{\alpha}$. The weight $\mathrm{d}_{\mathcal{K}}^{\alpha}$ belongs to the Muckenhoupt class $A_2$ provided $\alpha \in (-(2-k),(2-k))$; see \cite{ACDT2014} and \cite[Lemma 2.3(vi)]{MR1601373}. \EO{We thus identify the following two particular cases:}
%The aforementioned prototypical examples of Muckenhoupt weights are thus in order:
\begin{enumerate}
\item 	Let $z \in \Omega$. Then, the weight $\mathrm{d}_{z}^{\alpha} \in A_2$ if $\alpha \in (-2,2)$.
\item Let $\gamma \subset \Omega$ be a smooth closed curve without self-intersections. Then, the weight $\mathrm{d}_{\gamma}^{\alpha} \in A_2$ if $\alpha \in (-1,1)$.
\end{enumerate}

As a consequence of the fact that the lower dimensional objects $z$ and $\gamma$ are strictly contained in $\Omega$, there \EO{are} neighborhoods of $\partial \Omega$ where \EO{the} weights $\mathrm{d}_{z}^{\alpha}$ and $\mathrm{d}_{\gamma}^{\alpha}$ have no degeneracies or singularities. This simple observation motivates the following restricted class of Muckenhoupt weights \cite[Definition 2.5]{MR1601373}.
\begin{definition}[Class $A_2(G)$]
Let $G\subset\mathbb{R}^2$ be a Lipschitz domain. We say that $\omega\in A_2$ belongs to $A_2(G)$ if there is an open set $\mathcal{G} \subset G$ and $\varepsilon, \omega_l >0$ such that
$
\{x\in G: \textnormal{dist}(x,\partial G)<\varepsilon\}\subset \mathcal{G},
$
$\omega\in C(\bar{\mathcal{G}})$, and $\omega(x) \geq \omega_l$ for all $x\in\bar{\mathcal{G}}$.
\end{definition}

\subsection{Muckenhoupt-weighted Sobolev Spaces}\label{sec:wss}

Let $\omega \in A_2$ and $G\subset \mathbb{R}^{2}$ be an open set. We define 
%\EO{the following weighted Lebesgue and Sobolev spaces:}
\begin{itemize}
\item[$\bullet$] $L^{2}(\omega,G)$ as the space of measurable functions $f$ on $G$ such that 
$$\| f \|_{L^2(\omega,G)}:= \left( \int_G |f|^2 \omega \right)^{\frac{1}{2}} < \infty,$$
\item[$\bullet$] $H^1(\omega,G) := \{f \in L^{2}(\omega,G) : D^{\alpha} f \in L^2(\omega,G) \text{ for } |\alpha| \leq 1 \}$, \EO{endowed with the norm $\| f \|_{H^1(G)} := (  \| f \|_{L^2(G)}^2 + \| \nabla f \|^2_{L^2(G)} )^{\frac{1}{2}}$}, and
\item[$\bullet$] $H^{1}_{0}(\omega,G)$ as the closure of $C_{0}^{\infty}(G)$ in $H^1(\omega,G)$.
%, and
%\item[$\bullet$] \DQ{$H^{1}_{0}(\omega,G)'$ as the dual space of $H^{1}_{0}(\omega,G)$.}
\end{itemize}
For basic properties of these spaces, such as approximation by smooth functions, extensions theorems, \EO{and interpolation inequalities,} we refer the interested reader to \cite[Chapter 2]{Turesson2000}.

\EO{Spaces of vector valued functions will be denoted by boldface}
%To denote the vector-valued counterpart of these spaces we shall use uppercase bold 
uppercase letters whereas lowercase bold letters will be used to denote vector valued functions. \EO{In particular, we introduce $\mathbf{H}^{1}_{0}(\omega,G)$ and $| \cdot |_{\mathbf{H}^{1}(\omega,G)}$ as follows:}
\begin{equation*}
\EO{\mathbf{H}^{1}_{0}(\omega,G):=[H^{1}_{0}(\omega,G)]^2,
\quad
| \mathbf{v}|_{\mathbf{H}^{1}(\omega,G)}^{2}:=
\|\nabla \mathbf{v} \|_{\mathbf{L}^{2}(\omega,G)}^{2} = \sum_{i=1}^{2}\|\nabla v_{i}\|_{L^{2}(\omega,G)}^2,}
\end{equation*}
\EO{for every $\mathbf{v} \in \mathbf{H}^{1}_{0}(\omega,G)$.}

%Due to the fact that the weight $\omega$ belongs to $A_2$, most of the properties of classical Sobolev spaces have a weighted counterpart. In particular, $L^{2}(\omega,G)$ and $H^1(\omega,G)$ are Hilbert spaces \cite[Proposition 2.1.2]{Turesson2000} and smooth functions are dense \cite[Corollary 2.1.6]{Turesson2000}. In view of a weighted Poincar\'e inequality, that follows from \cite[Theorem 1.3]{Fabes_et_al1982}, we have that in $H^{1}_{0}(\omega,G)$ the seminorm $\|\nabla v\|_{L^2(\omega,G)}$ is equivalent to $\|v\|_{H^1(\omega,G)}$. 
%
%%We introduce the space $\mathbf{H}^{1}_{0}(\omega,G)$ and the semi-norm $| \cdot |_{\mathbf{H}^{1}(\omega,G)}$ as follows:
%%\begin{equation*}
%%\mathbf{H}^{1}_{0}(\omega,G):=[H^{1}_{0}(\omega,G)]^2,
%%\quad
%%| \mathbf{v}|_{\mathbf{H}^{1}(\omega,G)}^{2}:=
%%\|\nabla \mathbf{v} \|_{\mathbf{L}^{2}(\omega,G)}^{2} = \sum_{i=1}^{2}\|\nabla v_{i}\|_{L^{2}(\omega,G)}^2.
%%\end{equation*}
%

\subsection{Weighted Inequalities and Embeddings}

The following fundamental result, which is known as \emph{reverse H\"older inequality}, 
will be \EO{essential} for our analysis; see \cite[Theorem 7.4]{Javier2001}.

\begin{proposition}[Reverse H\"older Inequality]
If $\omega \in A_2$, then there exists \EO{a positive constant} $\epsilon$ such that, for every ball $B \subset \mathbb{R}^2$, we have
\[
%\frac{1}{|B|} \int_{B} \omega^{1+\epsilon} \lesssim \left( \frac{1}{|B|} \int_B \omega \right)^{1+\epsilon}.
\fint_{B} \omega^{1+\epsilon} \lesssim \left(  \fint_B \omega \right)^{1+\epsilon}.
\] 
The hidden constant only depends on the Muckenhoupt characteristic $[\omega]_{A_2}$.
\label{pro:reverse}
\end{proposition}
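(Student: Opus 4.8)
The plan is to establish the self-improving higher integrability of $\omega$ by first proving the $A_\infty$-type comparison
\begin{equation*}
\frac{\omega(E)}{\omega(B)} \le C \left( \frac{|E|}{|B|} \right)^{\delta}
\quad \text{for every ball } B \subset \mathbb{R}^2 \text{ and every measurable } E \subseteq B,
\tag{$\star$}
\end{equation*}
with $C,\delta>0$ depending only on $[\omega]_{A_2}$, and then upgrading $(\star)$ to the reverse H\"older inequality by a layer-cake argument.

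First I would extract from the $A_2$ condition the following elementary estimate: for $E \subseteq B$, H\"older's inequality gives $|E| = \int_E \omega^{1/2}\omega^{-1/2} \le \omega(E)^{1/2} ( \int_B \omega^{-1} )^{1/2}$, and since $\fint_B \omega^{-1} \le [\omega]_{A_2} ( \fint_B \omega )^{-1}$ this yields $( |E|/|B| )^{2} \le [\omega]_{A_2}\, \omega(E)/\omega(B)$. Applying this with $B \setminus E$ in place of $E$ produces a constant $\theta_0 := 1 - (4[\omega]_{A_2})^{-1} \in (0,1)$ such that $|E| \le \tfrac12 |B|$ implies $\omega(E) \le \theta_0\, \omega(B)$, and likewise with dyadic cubes in place of balls, since these are comparable. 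I would then bootstrap this dichotomy: given a dyadic cube $Q_0$ and $E \subseteq Q_0$ with $|E|/|Q_0|$ sufficiently small, a Calder\'on--Zygmund stopping-time decomposition of $\mathbf{1}_E$ at a suitable height produces pairwise disjoint dyadic cubes $\{Q_j\}$ covering $E$ up to a null set, with $|E \cap Q_j| \le \tfrac12 |Q_j|$ and $\sum_j |Q_j| \le \tfrac12 |Q_0|$; combining the two instances of the preceding step --- on each $Q_j$ and on $\bigcup_j Q_j \subseteq Q_0$ --- and iterating yields the power decay $(\star)$ for dyadic cubes, which transfers to arbitrary balls using that $\omega$ is doubling (another quantitative consequence of $\omega \in A_2$).

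Finally, to derive the statement, I would fix a ball $B$, set $\bar\omega := \fint_B \omega$, and use the layer-cake formula together with Chebyshev's inequality $|\{ x \in B : \omega(x) > \tau \}| \le \bar\omega |B| / \tau$ to write
\begin{equation*}
\int_B \omega^{1+\epsilon} = \epsilon \int_0^{\infty} \tau^{\epsilon - 1}\, \omega\big( \{ x \in B : \omega(x) > \tau \} \big) \, d\tau.
\end{equation*}
On $\{ \tau \le \bar\omega \}$ I would bound $\omega(\{ \omega > \tau \}) \le \omega(B)$, obtaining the contribution $|B|\,\bar\omega^{1+\epsilon}$; on $\{ \tau > \bar\omega \}$ I would combine $(\star)$ with Chebyshev to get $\omega(\{ \omega > \tau \}) \le C (\bar\omega/\tau)^{\delta} \omega(B)$, so that choosing any $\epsilon \in (0,\delta)$ makes the $\tau$-integral converge and gives $\fint_B \omega^{1+\epsilon} \le ( 1 + C\epsilon/(\delta - \epsilon) ) ( \fint_B \omega )^{1+\epsilon}$, with the constant depending only on $[\omega]_{A_2}$.

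I expect the main obstacle to be the quantitative bookkeeping in the bootstrap step: keeping track of how the exponent $\delta$ and constant $C$ in $(\star)$ depend only on $[\omega]_{A_2}$ (and on the fixed dimension two), and transferring the dyadic-cube estimate to general balls via doubling. Once $(\star)$ is available with these explicit dependencies, the concluding layer-cake computation is routine.
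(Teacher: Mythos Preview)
Your argument is correct and follows the standard route to the reverse H\"older inequality via the $A_\infty$ comparison condition; the Cauchy--Schwarz step, the Calder\'on--Zygmund iteration, and the layer-cake computation with the weighted measure $d\mu = \omega\,dx$ are all sound. Note, however, that the paper does not actually prove this proposition: it simply quotes it from the literature (Duoandikoetxea, \emph{Fourier Analysis}, Theorem~7.4), so there is no ``paper's own proof'' to compare against---your sketch is essentially the textbook argument found in that reference.
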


We now present the following embedding results.
\begin{theorem}[Continuous Embeddings]\label{thm:embedding_result}
If $\omega \in A_{2}$, then there exists $\varepsilon > 0$ such that $ \mathbf{H}_0^1(\omega,\Omega) \hookrightarrow \mathbf{L}^{2 + \varepsilon}(\Omega)$ and there exists $\kappa > 1$ such that $ \mathbf{H}_0^1(\omega,\Omega) \hookrightarrow \mathbf{W}_{0}^{1,\kappa}(\Omega)$.
\end{theorem}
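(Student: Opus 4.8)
The plan is to exploit the reverse Hölder inequality (Proposition~\ref{pro:reverse}) to gain integrability of the weight $\omega^{-1}$, and then combine the resulting higher summability of gradients with the classical Sobolev embedding in the unweighted setting. Since $\omega \in A_2$ if and only if $\omega^{-1} \in A_2$ (with the same characteristic), Proposition~\ref{pro:reverse} applied to $\omega^{-1}$ yields an exponent $\epsilon > 0$ such that $\omega^{-1} \in L^{1+\epsilon}_{\mathrm{loc}}(\mathbb{R}^2)$; in fact, since $\Omega$ is bounded, $\omega^{-1} \in L^{1+\epsilon}(\Omega)$. First I would fix such an $\epsilon$ and set $s := 1 + 1/\epsilon \in (1,\infty)$, the conjugate exponent of $1+\epsilon$.

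Next, for $\mathbf{v} \in \mathbf{H}^1_0(\omega,\Omega)$ (it suffices to argue for $\mathbf{v} \in \mathbf{C}^\infty_0(\Omega)$ and pass to the limit by density), I would apply Hölder's inequality with exponents $1+\epsilon$ and $s$ to the product $|\nabla \mathbf{v}|^{2s'} = |\nabla\mathbf{v}|^{2s'}\omega^{s'}\omega^{-s'}$ after choosing the right power. Concretely, writing $\kappa := 2s' = 2(1+\epsilon)/(1+2\epsilon) \in (1,2)$, one has
\begin{equation*}
\int_\Omega |\nabla \mathbf{v}|^{\kappa} = \int_\Omega |\nabla\mathbf{v}|^{\kappa}\omega^{\kappa/2}\,\omega^{-\kappa/2} \le \left( \int_\Omega |\nabla \mathbf{v}|^2 \omega \right)^{\kappa/2}\left( \int_\Omega \omega^{-(\kappa/2)\cdot\frac{2}{2-\kappa}} \right)^{\frac{2-\kappa}{2}},
\end{equation*}
and the choice of $\kappa$ makes the exponent on $\omega^{-1}$ in the last factor equal to $1+\epsilon$, so that integral is finite by the reverse Hölder step. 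This shows $\nabla \mathbf{v} \in \mathbf{L}^{\kappa}(\Omega)$ with $\kappa>1$ and $\|\nabla\mathbf{v}\|_{\mathbf{L}^\kappa(\Omega)} \lesssim |\mathbf{v}|_{\mathbf{H}^1(\omega,\Omega)}$; together with the fact that $\mathbf{v}$ vanishes on $\partial\Omega$ (the $\mathbf{C}^\infty_0$ approximants do, and this is preserved) and the Poincaré inequality in $\mathbf{W}^{1,\kappa}_0(\Omega)$, we obtain the continuous embedding $\mathbf{H}^1_0(\omega,\Omega) \hookrightarrow \mathbf{W}^{1,\kappa}_0(\Omega)$. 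Finally, since $\Omega \subset \mathbb{R}^2$, the Sobolev embedding gives $\mathbf{W}^{1,\kappa}_0(\Omega) \hookrightarrow \mathbf{L}^{2\kappa/(2-\kappa)}(\Omega)$; as $\kappa > 1$ we have $2\kappa/(2-\kappa) > 2$, so setting $\varepsilon := 2\kappa/(2-\kappa) - 2 > 0$ yields the first embedding $\mathbf{H}^1_0(\omega,\Omega)\hookrightarrow \mathbf{L}^{2+\varepsilon}(\Omega)$, and composing the two gives both claims.

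The only delicate point is bookkeeping of exponents: one must verify that the $\kappa$ produced by the reverse Hölder exponent $\epsilon$ genuinely satisfies $1 < \kappa < 2$ (so that the unweighted Sobolev embedding in dimension two applies and yields a gain past $L^2$), and that the density of $\mathbf{C}^\infty_0(\Omega)$ in $\mathbf{H}^1_0(\omega,\Omega)$ — available from \cite[Chapter 2]{Turesson2000} — is used to transfer the inequality from smooth functions to the whole space. No genuine obstacle arises beyond this elementary but careful juggling of Hölder conjugates; the substantive input is entirely contained in Proposition~\ref{pro:reverse}.
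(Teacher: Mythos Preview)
Your approach is correct and, for the $\mathbf{W}^{1,\kappa}_0$ embedding, is precisely what the paper's ``similar considerations'' would spell out. For the $\mathbf{L}^{2+\varepsilon}$ embedding you take a genuinely different route: the paper first invokes the weighted Sobolev embedding $\mathbf{H}^1_0(\omega,\Omega) \hookrightarrow \mathbf{L}^4(\omega,\Omega)$ of Fabes--Kenig--Serapioni and then applies H\"older together with the reverse H\"older inequality to pass from $\mathbf{L}^4(\omega,\Omega)$ to $\mathbf{L}^{2+\varepsilon}(\Omega)$; you instead establish the $\mathbf{W}^{1,\kappa}_0$ embedding first and compose with the classical unweighted Sobolev embedding in $\mathbb{R}^2$. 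Your argument is more self-contained (it avoids the external weighted Sobolev theorem entirely) and in fact produces a slightly larger admissible $\varepsilon$; the paper's route has the advantage of directly reusing the weighted $\mathbf{L}^4$ embedding that appears elsewhere in the analysis. One arithmetic slip to fix: the correct choice is $\kappa = 2(1+\epsilon)/(2+\epsilon)$, so that the exponent $\kappa/(2-\kappa)$ on $\omega^{-1}$ equals $1+\epsilon$; your stated value $2(1+\epsilon)/(1+2\epsilon)$ gives exponent $(1+\epsilon)/\epsilon$ instead, and neither value equals $2s'$ in your notation. With this corrected $\kappa$ (which still lies in $(1,2)$) the bookkeeping goes through exactly as you describe.
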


\begin{proof}
We prove the first embedding result; the second one follows from similar considerations. Let $\omega \in A_{2}$ and $\boldsymbol\Phi \in \mathbf{H}_0^1(\omega,\Omega)$. An application of \cite[Theorem 1.3]{Fabes_et_al1982} implies that $\boldsymbol \Phi \in \mathbf{L}^{4}(\omega,\Omega)$. We thus invoke H\"older's inequality to obtain
\begin{equation}
\int_{\Omega}|\boldsymbol\Phi|^{2 + \varepsilon} 
= 
\int_{\Omega}|\boldsymbol\Phi|^{2 + \varepsilon}\omega^{\frac{2+\varepsilon}{4}}\omega^{-\frac{2+\varepsilon}{4}} 
\leq \left(\int_{\Omega}|\boldsymbol\Phi|^{4}\omega\right)^{\frac{2+\varepsilon}{4}}\left(\int_{\Omega}\omega^{-\frac{2+\varepsilon}{2-\varepsilon}}\right)^{\frac{2-\varepsilon}{4}}
\label{eq:vare_L2_H1omega}
\end{equation}
for some $\varepsilon>0$. We observe that, since $\omega \in A_{2}$ and $\frac{2+\varepsilon}{2-\varepsilon} = 1 + \delta$, with $\delta = \frac{2\varepsilon}{2 - \varepsilon}$, the reverse H\"older inequality of Proposition \ref{pro:reverse} allows us to obtain
\[ 
\int_{\Omega}\omega^{-\frac{2+\varepsilon}{2-\varepsilon}}
=
\int_{\Omega}\omega^{-(1+\delta)}
 \lesssim |\Omega|^{-\delta} \left( \int_{\Omega}\omega^{-1} \right)^{1+\delta}
 =
\EO{|\Omega| \left( \fint_{\Omega}\omega^{-1} \right)^{\frac{2+\varepsilon}{2-\varepsilon}}.}
\]
Here, $\varepsilon>0$ is sufficiently small such that \EO{the} previously defined parameter $\delta$ is less or equal that the one dictated by the reverse H\"older inequality. Since $\int_{\Omega}\omega^{-1}$ is uniformly bounded, the previous bound combined with estimate \eqref{eq:vare_L2_H1omega} allow us to conclude.
\qed
\end{proof}
%%%%%%%%%%%%%%%%%%%%%%%%%%%%%%%%%%%%%%%%%%%%%%%%%%%%%%%%%%%%
%%%%%%%%%%%%%%%%%%%%%%%%%%%%%%%%%%%%%%%%%%%%%%%%%%%%%%%%%%%%
%%%%%%%%%%%%%%%%%%%%%%%%%%%%%%%%%%%%%%%%%%%%%%%%%%%%%%%%%%%%
%%%%%%%%%%%%%%%%%%%%%%%%%%%%%%%%%%%%%%%%%%%%%%%%%%%%%%%%%%%%
\subsubsection{A Particular Weight}
\label{sec:particular_weight}

In this section, we introduce a particular weight in the class $A_2$ that will be of fundamental importance. %to analyze our optimal control problem. 
With the finite set $\mathcal{D} \subset \Omega$ at hand, we define
\begin{equation}
\label{eq:d_D}
d_{\mathcal{D}} := \left\{\begin{array}{ll}
\mathrm{dist}(\mathcal{D},\partial \Omega), & \mbox{if } \ell=1,
\\
\min \left \{ \mathrm{dist}(\mathcal{D},\partial \Omega), \min\{|t-t'|: t,t' \in \mathcal{D}, \ t\neq t' \} \right \}, & \mbox{otherwise}.
\end{array}\right.
\end{equation}
We recall that $\ell = \# \mathcal{D}$. Since $\mathcal{D} \subset \Omega$ is finite, we immediately conclude that $d_{\mathcal{D}}>0$. With this notation at hand, we define the weight $\rho$ as follows: 
\begin{equation}\label{def:weight_rho}
\text{If }\ell=1,~ \rho(x) = \mathsf{d}_t^\alpha(x),
\text{ otherwise, }
\rho(x)=
\left\{\begin{array}{lr}
\mathsf{d}_t^\alpha(x), & \exists t\in \mathcal{D}:\mathsf{d}_t(x) < \frac{d_\mathcal{D}}{2},\\
1, & \mathsf{d}_t(x)\geq \frac{d_\mathcal{D}}{2} \: \forall t\in \mathcal{D},
\end{array}
\right.
\end{equation}
where $\mathsf{d}_t(x) := |x - t|$ and $\alpha \in (0,2)$. Since $(0,2) \subset (-2,2)$, owing to  \cite[Theorem 6]{ACDT2014} and \cite[Lemma 2.3 (vi)]{MR1601373}, $\rho \in A_{2}$. The extra restriction on $\alpha$, namely, $\alpha>0$, is needed in order to guarantee that for $t\in\mathcal{D}$ and $\mathbf{v}_t \in \mathbb{R}^2$, $\mathbf{v}_t\delta_{t} \in \mathbf{H}_0^1(\rho^{-1},\Omega)'$; see \cite[Remark 21.19]{MR2305115} and \cite[Proposition 5.2]{MR4081912} for details.

\EO{The following lemma provides instrumental embedding and density results.}

\begin{lemma}[Embedding and Density Results]\label{lemma:density_H-1_dualH01_omega-1} 
\EO{Let $\rho$ be the weight defined in \eqref{def:weight_rho}. If $\alpha \in (0,2)$, then}
\begin{itemize}
\EO{\item[\textnormal{(i)}] $\mathbf{H}^{1}_{0}(\rho^{-1},\Omega) \hookrightarrow \mathbf{H}_{0}^{1}(\Omega) \hookrightarrow \mathbf{H}^{1}_{0}(\rho,\Omega)$,}
\EO{\item[\textnormal{(ii)}] $\mathbf{H}^{1}_{0}(\rho,\Omega)' \hookrightarrow \mathbf{H}^{-1}(\Omega) \hookrightarrow \mathbf{H}^{1}_{0}(\rho^{-1},\Omega)'$, and}
\EO{\item[\textnormal{(iii)}] $\mathbf{H}^{-1}(\Omega)$ is dense in $\mathbf{H}^{1}_{0}(\rho^{-1},\Omega)'$.}
\end{itemize}
\end{lemma}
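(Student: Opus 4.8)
The plan is to reduce the whole statement to two elementary facts about the weight $\rho$ of \eqref{def:weight_rho}. First, since $\alpha>0$ and $\Omega$ is bounded, $\rho$ is essentially bounded on $\Omega$: one has $0<\rho(x)\le C_\rho$ a.e.\ with $C_\rho:=\max\{1,(\operatorname{diam}\Omega)^{\alpha}\}$, because in the case $\ell=1$, $\rho(x)=\mathsf{d}_t^{\alpha}(x)\le(\operatorname{diam}\Omega)^{\alpha}$, while in the case $\ell\ge 2$ the value $\rho(x)$ is either $1$ or $\mathsf{d}_t^{\alpha}(x)$ with $\mathsf{d}_t(x)<d_{\mathcal{D}}/2$; equivalently $\rho^{-1}(x)\ge C_\rho^{-1}$ a.e. Second, since $\alpha<2$ (or simply because $\rho\in A_2\Rightarrow\rho^{-1}\in A_2$), $\rho^{-1}\in\mathbf{L}^1_{\mathrm{loc}}(\Omega)$, so $\int_{\operatorname{supp}\phi}(|\phi|^2+|\nabla\phi|^2)\rho^{-1}<\infty$ for every $\phi\in C_0^{\infty}(\Omega)$; hence $C_0^{\infty}(\Omega)\subset\mathbf{H}^1_0(\rho^{-1},\Omega)$, and a Cauchy--Schwarz estimate $\int_K|f|\le(\int_K|f|^2\rho^{\mp1})^{1/2}(\int_K\rho^{\pm1})^{1/2}$ shows that $\mathbf{L}^2(\rho,\Omega)$ and $\mathbf{L}^2(\rho^{-1},\Omega)$ — hence $\mathbf{H}^1_0(\rho,\Omega)$ and $\mathbf{H}^1_0(\rho^{-1},\Omega)$ — embed continuously into $\mathbf{L}^1_{\mathrm{loc}}(\Omega)$, and thus into the space of distributions on $\Omega$.

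For item (i): the bound $\rho\le C_\rho$ yields, at the level of $C_0^\infty(\Omega)$, the norm inequalities $\|\cdot\|_{\mathbf{H}^1(\rho,\Omega)}\le C_\rho^{1/2}\|\cdot\|_{\mathbf{H}^1(\Omega)}$ and $\|\cdot\|_{\mathbf{H}^1(\Omega)}\le C_\rho^{1/2}\|\cdot\|_{\mathbf{H}^1(\rho^{-1},\Omega)}$. Passing to the respective $C_0^\infty$-closures produces the two continuous inclusions, once one checks that a sequence $(\phi_n)\subset C_0^\infty(\Omega)$ which is Cauchy in the stronger norm converges to the \emph{same} limit in the weaker space; this is immediate because, by the previous paragraph, both limits agree with the limit in $\mathbf{L}^1_{\mathrm{loc}}(\Omega)$. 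Injectivity of each inclusion is clear.

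For item (ii): I would take adjoints. The embedding $\mathbf{H}^1_0(\rho^{-1},\Omega)\hookrightarrow\mathbf{H}^1_0(\Omega)$ from (i) has dense range, since its range contains $C_0^\infty(\Omega)$ (which lies in $\mathbf{H}^1_0(\rho^{-1},\Omega)$) and $C_0^\infty(\Omega)$ is dense in $\mathbf{H}^1_0(\Omega)$; likewise $\mathbf{H}^1_0(\Omega)\hookrightarrow\mathbf{H}^1_0(\rho,\Omega)$ has dense range, because its range already contains $C_0^\infty(\Omega)$, which by definition is dense in $\mathbf{H}^1_0(\rho,\Omega)$. The adjoints of these two continuous, injective, dense-range maps are therefore continuous and injective, which is exactly the assertion $\mathbf{H}^1_0(\rho,\Omega)'\hookrightarrow\mathbf{H}^{-1}(\Omega)$ and $\mathbf{H}^{-1}(\Omega)\hookrightarrow\mathbf{H}^1_0(\rho^{-1},\Omega)'$, the duality pairings being the natural restrictions.

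For item (iii): observe that $\mathbf{H}^1_0(\rho^{-1},\Omega)$, being built from the inner products $\int(\,\cdot\,)(\,\cdot\,)\rho^{-1}$ and $\int\nabla(\,\cdot\,)\!:\!\nabla(\,\cdot\,)\rho^{-1}$, is a Hilbert space, hence reflexive; consequently a linear subspace of $\mathbf{H}^1_0(\rho^{-1},\Omega)'$ is norm-dense iff its pre-annihilator in $\mathbf{H}^1_0(\rho^{-1},\Omega)$ is trivial. So let $v\in\mathbf{H}^1_0(\rho^{-1},\Omega)$ annihilate the embedded copy of $\mathbf{H}^{-1}(\Omega)$; since that embedding is the adjoint of $\mathbf{H}^1_0(\rho^{-1},\Omega)\hookrightarrow\mathbf{H}^1_0(\Omega)$, this means $\langle f,v\rangle_{\mathbf{H}^{-1}(\Omega),\mathbf{H}^1_0(\Omega)}=0$ for every $f\in\mathbf{H}^{-1}(\Omega)=\mathbf{H}^1_0(\Omega)'$, where $v$ is read as an element of $\mathbf{H}^1_0(\Omega)$ via (i); hence $v=0$ in $\mathbf{H}^1_0(\Omega)$ and, by injectivity, $v=0$ in $\mathbf{H}^1_0(\rho^{-1},\Omega)$. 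The main obstacle, and the only genuinely delicate point, is precisely this bookkeeping in (iii): one must verify that $\mathbf{H}^{-1}(\Omega)\hookrightarrow\mathbf{H}^1_0(\rho^{-1},\Omega)'$ really is ``restriction along the embedding of (i)'', so that annihilating $\mathbf{H}^{-1}(\Omega)$ in the weighted dual coincides with being annihilated by all of $\mathbf{H}^1_0(\Omega)'$; once this is settled, reflexivity of the Hilbert space $\mathbf{H}^1_0(\rho^{-1},\Omega)$ closes the argument, and everything else is a routine consequence of $\rho\le C_\rho$ (which encodes $\alpha>0$) and $\rho^{-1}\in\mathbf{L}^1_{\mathrm{loc}}(\Omega)$ (which encodes $\alpha<2$).
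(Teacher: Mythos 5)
Your proposal is correct and follows essentially the same route as the paper: the two-sided norm comparison from $\rho\le C_\rho$ (i.e.\ $\alpha>0$) for (i), duality for (ii), and reflexivity plus the Hahn--Banach density criterion for (iii), where your use of the fact that $\mathbf{H}^{-1}(\Omega)$ separates points of $\mathbf{H}^{1}_{0}(\Omega)$ replaces the paper's explicit Riesz representative $\mathbf{L}_{\mathbf{f}}$ but amounts to the same argument. Your extra bookkeeping — checking via $\mathbf{L}^{1}_{\mathrm{loc}}$ that the closure limits coincide in (i) and that the dual maps have dense range, hence injective adjoints, in (ii) — is a welcome refinement of details the paper leaves implicit.
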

\begin{proof}
\begin{itemize}
\item[(i)] \EO{We prove that $\mathbf{H}^{1}_{0}(\rho^{-1},\Omega) \hookrightarrow \mathbf{H}_{0}^{1}(\Omega)$; the other embedding follows from similar considerations. If $\mathbf{v} \in \mathbf{H}_{0}^{1}(\rho^{-1},\Omega)$, then}
\begin{equation*}
\EO{
\|\nabla \mathbf{v}\|_{\mathbf{L}^{2}(\Omega)} 
= 
\|\rho^{\frac{1}{2}}\rho^{-\frac{1}{2}}\nabla \mathbf{v}\|_{\mathbf{L}^{2}(\Omega)} 
\leq 
\|\rho^{\frac{1}{2}}\|_{L^{\infty}(\Omega)}\|\nabla \mathbf{v}\|_{\mathbf{L}^{2}(\rho^{-1},\Omega)}.
}
\end{equation*}
\EO{Notice that, since $\alpha >0$, the weight $\rho$ is uniformly bounded in $\Omega$.}
\item[(ii)] \EO{We prove that $\mathbf{H}^{-1}(\Omega) \hookrightarrow \mathbf{H}^{1}_{0}(\rho^{-1},\Omega)'$; the other embedding follows from similar considerations. Let $\mathbf{L}$ be an arbitrary element in $\mathbf{H}^{-1}(\Omega)$. In view of the embedding $\mathbf{H}^{1}_{0}(\rho^{-1},\Omega) \hookrightarrow \mathbf{H}^{1}_{0}(\Omega)$, we immediately deduce that
\begin{align*}
\|\mathbf{L}\|_{\mathbf{H}^{1}_{0}(\rho^{-1},\Omega)'}  
&
:= \sup_{ \mathbf{v} \in \mathbf{H}^{1}_{0}(\rho^{-1},\Omega)} \frac{ \langle \mathbf{L} , \mathbf{v} \rangle  }{ \|\nabla \mathbf{v}\|_{\mathbf{L}^{2}(\rho^{-1},\Omega)}} 
\\
& \leq \sup_{ \mathbf{v} \in \mathbf{H}^{1}_{0}(\Omega)} \frac{ \langle \mathbf{L} , \mathbf{v} \rangle  }{ \|\nabla \mathbf{v}\|_{\mathbf{L}^{2}(\rho^{-1},\Omega)}}
\leq \|\rho^{\frac{1}{2}}\|_{L^{\infty}(\Omega)} \|\mathbf{L}\|_{\mathbf{H}^{-1}(\Omega)}.
\end{align*}
This implies that $\mathbf{L} \in \mathbf{H}^{1}_{0}(\rho^{-1},\Omega)'$, as we intended to show.}
\item[(iii)] \EO{For completeness, we provide a proof based on \cite[Corollary 1.8 and Remark 5]{Brezis}: Let $\mathbf{F} \in \mathbf{H}_{0}^{1}(\rho^{-1},\Omega)''$ be such that
\begin{equation}\label{eq:density_prove_1}
\langle \mathbf{F}, \mathbf{L} \rangle_{\mathbf{H}_{0}^{1}(\rho^{-1},\Omega)'',\mathbf{H}_{0}^{1}(\rho^{-1},\Omega)'} = 0 \quad \forall \mathbf{L} \in \mathbf{H}^{-1}(\Omega).
\end{equation}
We have to prove that $\mathbf{F} = \mathbf{0}$ in $\mathbf{H}_{0}^{1}(\rho^{-1},\Omega)''$. Since $\mathbf{H}_{0}^{1}(\rho^{-1},\Omega)$ is a reflexive space, there exists $\mathbf{f} \in \mathbf{H}_{0}^{1}(\rho^{-1},\Omega)$ such that 
\begin{equation}\label{eq:density_prove_2}
\langle \mathbf{F}, \mathbf{Q} \rangle_{\mathbf{H}_{0}^{1}(\rho^{-1},\Omega)'',\mathbf{H}_{0}^{1}(\rho^{-1},\Omega)'} = \langle \mathbf{Q}, \mathbf{f} \rangle_{\mathbf{H}_{0}^{1}(\rho^{-1},\Omega)',\mathbf{H}_{0}^{1}(\rho^{-1},\Omega)}
\end{equation}
for all $\mathbf{Q} \in \mathbf{H}_{0}^{1}(\rho^{-1},\Omega)'$. In view of $\mathbf{H}^{-1}(\Omega) \hookrightarrow \mathbf{H}^{1}_{0}(\rho^{-1},\Omega)'$, relation \eqref{eq:density_prove_2} is also valid for all $\mathbf{Q} \in \mathbf{H}^{-1}(\Omega)$. On the other hand, the Riesz representation theorem immediately yields that for every $\mathbf{u} \in \mathbf{H}_{0}^{1}(\Omega)$ there exists $\mathbf{L} \in \mathbf{H}^{-1}(\Omega)$ satisfying
\begin{equation}\label{eq:Riesz_representative}
\langle \mathbf{L},\mathbf{w} \rangle_{\mathbf{H}^{-1}(\Omega),\mathbf{H}_{0}^{1}(\Omega)} = \int_{\Omega}\nabla \mathbf{u}: \nabla \mathbf{w} \quad \forall \mathbf{w} \in \mathbf{H}_{0}^{1}(\Omega).
\end{equation}
Since $\mathbf{H}_{0}^{1}(\rho^{-1},\Omega)\hookrightarrow\mathbf{H}_{0}^{1}(\Omega)$, identity \eqref{eq:Riesz_representative} holds for $\mathbf{u} \in \mathbf{H}_{0}^{1}(\rho^{-1},\Omega)$. In particular, for $\mathbf{f} \in \mathbf{H}_{0}^{1}(\rho^{-1},\Omega)$, there exists $\mathbf{L}_{\mathbf{f}} \in \mathbf{H}^{-1}(\Omega)$ such that
$$  \langle \mathbf{L}_{\mathbf{f}},\mathbf{w} \rangle_{\mathbf{H}^{-1}(\Omega),\mathbf{H}_{0}^{1}(\Omega)} = \int_{\Omega}\nabla \mathbf{f}: \nabla \mathbf{w} \quad \forall \mathbf{w} \in \mathbf{H}_{0}^{1}(\Omega). $$
Set $\mathbf{w} = \mathbf{f}$ into the previous relation and invoke \eqref{eq:density_prove_1} and \eqref{eq:density_prove_2} to conclude that $\|\nabla \mathbf{f}\|_{\mathbf{L}^{2}(\Omega)} = 0$. This implies that $\mathbf{f} = \mathbf{0}$ a.e. in $\Omega$. Consequently, $\mathbf{f} = \mathbf{0}$ in $\mathbf{H}_{0}^{1}(\rho^{-1},\Omega)$ and hence $\mathbf{F} = \mathbf{0}$ in $\mathbf{H}_{0}^{1}(\rho^{-1},\Omega)''$.
\qed}
\end{itemize} 

\end{proof}

%%%%%%%%%%%%%%%%%%%%%%%%%%%%%%%%%%%%%%%%%%%%%%%%%%%%%%%%%%%%
%%%%%%%%%%%%%%%%%%%%%%%%%%%%%%%%%%%%%%%%%%%%%%%%%%%%%%%%%%%%
%%%%%%%%%%%%%%%%%%%%%%%%%%%%%%%%%%%%%%%%%%%%%%%%%%%%%%%%%%%%
%%%%%%%%%%%%%%%%%%%%%%%%%%%%%%%%%%%%%%%%%%%%%%%%%%%%%%%%%%%%

\section{The Navier--Stokes Equations Under Singular Forcing}
\label{sec:NS_singular_rhs}

\EO{In this section, we follow the weighted approach developed in \cite{MR3998864} and review existence results for a suitable variational formulation of the stationary Navier--Stokes equations under singular forcing. By singular, we mean that the forcing term of the momentum equation is allowed to belong to the space $\mathbf{H}_0^1(\omega^{-1},\Omega)'$ with $\omega \in A_2$. To be precise, given $\mathbf{f} \in \mathbf{H}_0^1(\omega^{-1},\Omega)'$, we consider the following weak problem: Find $(\boldsymbol\Phi,\zeta)\in \mathbf{H}_0^1(\omega,\Omega)\times L^2(\omega,\Omega)/\mathbb{R}$ such that 
\begin{equation}
\label{eq:weak_NS_delta}
\begin{aligned}
\int_{\Omega} \left(\nu \nabla \boldsymbol\Phi : \nabla \mathbf{v} - \boldsymbol\Phi\otimes\boldsymbol\Phi: \nabla \mathbf{v} - \zeta\textrm{div }\mathbf{v}\right) 
&
= \langle \mathbf{f},\mathbf{v}\rangle_{\mathbf{H}_0^1(\omega^{-1},\Omega)',\mathbf{H}_0^1(\omega^{-1},\Omega)}, \\
 \int_{\Omega}  q \text{div }\boldsymbol\Phi
 &
 = 0,
\end{aligned}
\end{equation}
for all $(\mathbf{v},q)\in\mathbf{H}_0^1(\omega^{-1},\Omega)\times L^2(\omega^{-1},\Omega)/\mathbb{R}$. Here, $\nu>0$ and $\omega \in A_{2}$}.
%, $\omega$ denotes a weight in $A_2$, and $\langle \cdot, \cdot\rangle$ denotes the duality pairing between $\mathbf{H}_0^1(\omega^{-1},\Omega)'$ and $\mathbf{H}_0^1(\omega^{-1},\Omega)$.} 
%The spaces $\mathcal{X}$ and $\mathcal{Y}$ are defined as follows:
%\begin{equation}\label{eq:spaces_X_Y}
%\mathcal{X}:=\mathbf{H}_0^1(\omega,\Omega)\times L^2(\omega,\Omega)/\mathbb{R}, \qquad \mathcal{Y}:=\mathbf{H}_0^1(\omega^{-1},\Omega)\times L^2(\omega^{-1},\Omega)/\mathbb{R}.
%\end{equation}

\EO{Existence of solutions without smallness conditions is as follows \cite[Theorem 1]{MR3998864}: Let $\Omega$ be Lipschitz, $\omega \in A_2(\Omega)$, $\nu >0$, and $\mathbf{f} \in \mathbf{H}_0^1(\omega^{-1},\Omega)'$. Thus, \eqref{eq:weak_NS_delta} has at least one solution $(\boldsymbol\Phi, \zeta) \in \mathbf{H}_0^1(\omega,\Omega)\times L^2(\omega,\Omega)/\mathbb{R}$, which satisfies}
\begin{equation}\label{eq:NS_weighted_stab}
\|\nabla \boldsymbol\Phi\|_{\mathbf{L}^{2}(\omega,\Omega)} + \|\zeta\|_{L^{2}(\omega,\Omega)}
\lesssim
\|\mathbf{f}\|_{\mathbf{H}_0^1(\omega^{-1},\Omega)'}.
\end{equation}

\EO{
A similar result can be obtained on $L^{p}$-based spaces. In an abuse of notation, we denote by $(\boldsymbol\Phi, \zeta) \in \mathbf{W}_0^{1,p}(\Omega)\times L^{p}(\Omega)/\mathbb{R}$ the solution to
\begin{equation}
\label{eq:weak_NS_delta_p}
\begin{aligned}
\int_{\Omega} \left(\nu \nabla \boldsymbol\Phi : \nabla \mathbf{v} - \boldsymbol\Phi\otimes\boldsymbol\Phi: \nabla \mathbf{v} - \zeta\textrm{div }\mathbf{v}\right) 
&
= \langle \mathbf{f},\mathbf{v}\rangle_{\mathbf{W}^{-1,p}(\Omega),\mathbf{W}^{1,p'}(\Omega)}, \\
\int_{\Omega}  q \text{div }\boldsymbol\Phi
&
= 0,
\end{aligned}
\end{equation}
for all $(\mathbf{v},q) \in \mathbf{W}_0^{1,p'}(\Omega) \times L^{p'}(\Omega) \setminus \mathbb{R}$. Here, $\mathbf{f} \in \mathbf{W}^{-1,p}(\Omega)$ and $p'$ is such that $1/p + 1/p' = 1$. Let us assume that $\Omega$ is Lipschitz and that $\nu>0$. Within this setting at hand, we have the following existence result: If $p \in (4/3-\epsilon, 2)$, where $\epsilon = \epsilon (\Omega)>0$ denotes a constant that depends on $\Omega$, then problem \eqref{eq:weak_NS_delta_p} has at least one solution $(\boldsymbol\Phi, \zeta) \in \mathbf{W}_0^{1,p}(\Omega)\times L^{p}(\Omega)/\mathbb{R}$. In addition, we have the following stability bound:
\begin{equation}\label{eq:NS_p_stab}
\EO{\|\nabla \boldsymbol\Phi\|_{\mathbf{L}^{p}(\Omega)} + \|\zeta\|_{L^{p}(\Omega)}
\lesssim
\|\mathbf{f}\|_{\mathbf{W}^{-1,p}(\Omega)}.}
\end{equation}
The proof of such an existence result and the stability bound \eqref{eq:NS_p_stab} follows from the arguments elaborated in \cite[Section 3]{MR4257071}.}

%\begin{remark}[ $\mathbf{W}_{0}^{1,p}$--regularity of $\boldsymbol{\Phi}$]\label{rem:reg_state_vel}
%Let $\mathcal{V} = (\mathbf{v}_{1},\ldots,\mathbf{v}_{\ell}) \in [\mathbb{R}^{2}]^{\ell}$ and $\alpha \in (0,2)$. Since $\delta_{t} \in \mathbf{H}_0^1(\rho^{-1},\Omega)' \cap \mathbf{W}^{-1,p}(\Omega)$, for all $t \in \mathcal{D}$ and for all $p < 2$, it follows that, when the forcing term $\mathbf{f} = \sum_{t \in \mathcal{D}}\mathbf{v}_{t}\delta_{t}$, the velocity field $\boldsymbol\Phi$ that solves \eqref{eq:weak_NS_delta} belongs to $\mathbf{H}_{0}^{1}(\rho,\Omega) \cap \mathbf{W}_{0}^{1,p}(\Omega)$, for all $p \in (4/3 - \lambda, 2)$, where $\lambda = \lambda(\Omega) > 0$. The latter follows by adapting the existence results of \cite[Section 3]{MR4257071}, in particular we require to relax the assumptions of \cite[Proposition 2]{MR4257071}.
%\end{remark}

%%%%%%%%%%%%%%%%%%%%%%%%%%%%%%%%%%%%%%%%%%%%%%%%%%%%%%%%%%%%
%%%%%%%%%%%%%%%%%%%%%%%%%%%%%%%%%%%%%%%%%%%%%%%%%%%%%%%%%%%%
%%%%%%%%%%%%%%%%%%%%%%%%%%%%%%%%%%%%%%%%%%%%%%%%%%%%%%%%%%%%
%%%%%%%%%%%%%%%%%%%%%%%%%%%%%%%%%%%%%%%%%%%%%%%%%%%%%%%%%%%%

\subsection{Regular Solutions}

\EO{In this section, we follow \cite{MR3936891} and introduce the concept of \emph{regular solutions} for the Navier--Stokes equations.}

\begin{definition}[Regular Solution]\label{def:reg_sol}
\EO{Let $(\boldsymbol\Phi,\zeta) \in \mathbf{H}_0^1(\rho,\Omega)\times L^2(\rho,\Omega)/\mathbb{R} $ be a weak solution to \eqref{def:state_eq} associated to a control $\mathcal{U} = ( \mathbf{u}_1,\dots \mathbf{u}_{\ell} ) \in \mathbb{U}_{ad}$. We say that the velocity field $\boldsymbol\Phi$ is regular if for every $\mathbf{g} \in \mathbf{H}^{-1}(\Omega)$ the weak problem: Find $(\boldsymbol\theta,\xi)\in \mathbf{H}_0^1(\Omega)\times L^2(\Omega)/\mathbb{R}$ such that
\begin{equation}
\label{eq:lin_NS}
\begin{aligned}
\displaystyle\int_{\Omega} \left[ \left(\nu \nabla \boldsymbol\theta
-\boldsymbol\Phi\otimes\boldsymbol\theta - \boldsymbol\theta \otimes \boldsymbol\Phi \right): \nabla \mathbf{w}
- 
\xi\textnormal{div }\mathbf{w}\right] 
 & = \langle\mathbf{g},\mathbf{w}\rangle_{\mathbf{H}^{-1}(\Omega),\mathbf{H}_{0}^{1}(\Omega)},
 \\
 \int_{\Omega} s\textnormal{div }\boldsymbol\theta & = 0,
 \end{aligned}
\end{equation}
for all $(\mathbf{w},s) \in \mathbf{H}_{0}^{1}(\Omega) \times L^{2}(\Omega)/\mathbb{R}$, is well--posed.}
\end{definition}

\EO{Let us introduce the linear map
\begin{align}\label{eq:T}
\begin{split}
T: \mathbf{V}(\Omega) \times L^{2}(\Omega)/\mathbb{R} & \rightarrow \mathbf{H}^{-1}(\Omega), \\
(\boldsymbol\theta,\xi) & \mapsto -\nu \Delta \boldsymbol\theta + \text{div}\left(\boldsymbol\Phi \otimes \boldsymbol\theta ) + \text{div}(\boldsymbol\theta \otimes \boldsymbol\Phi \right)
%(\boldsymbol\Phi \cdot \nabla)\boldsymbol\theta + (\boldsymbol\theta \cdot \nabla)\boldsymbol\Phi 
+ \nabla \xi,
\end{split}
\end{align}
where $\mathbf{V}(\Omega) = \{\mathbf{w} \in \mathbf{H}_{0}^{1}(\Omega): \text{div } \mathbf{w} = 0 \text{ in } \Omega\}$. We notice that, as a consequence of Definition \ref{def:reg_sol}, if the velocity field $\boldsymbol\Phi$ is regular, then the map $T$ is an isomorphism from $\mathbf{V}(\Omega) \times L^{2}(\Omega)/\mathbb{R}$ into $\mathbf{H}^{-1}(\Omega)$ \cite[Section 2]{MR3936891}.}
%\end{remark}

\EO{In the following result, we show a well-posedness result that is crucial for the upcoming analysis.}

\begin{theorem}[Well-Posedness in Weighted Spaces]\label{thm:wellposed_lin_NS_weighted}
\EO{Let $(\boldsymbol\Phi,\zeta)$ be a solution to \eqref{def:state_eq} associated to $\mathcal{U} \in \mathbb{U}_{ad}$ such that $\boldsymbol\Phi$ is regular. Then, for every $\mathbf{g} \in \mathbf{H}_{0}^{1}(\rho^{-1},\Omega)'$ the problem: Find $(\boldsymbol\theta,\xi) \in \mathbf{H}_0^1(\rho,\Omega)\times L^2(\rho,\Omega)/\mathbb{R}$ such that
\begin{multline}
\label{eq:lin_NS_weighted}
%\begin{aligned}
\displaystyle\int_{\Omega} \left[ \left(\nu \nabla \boldsymbol\theta
-\boldsymbol\Phi\otimes\boldsymbol\theta - \boldsymbol\theta \otimes \boldsymbol\Phi \right): \nabla \mathbf{w}
- 
\xi\textnormal{div }\mathbf{w}\right]
\\
  = \langle\mathbf{g},\mathbf{w}\rangle_{\mathbf{H}_{0}^{1}(\rho^{-1},\Omega)',\mathbf{H}_{0}^{1}(\rho^{-1},\Omega)}, 
\quad
 \int_{\Omega} s\textnormal{div }\boldsymbol\theta = 0,
% \end{aligned}
\end{multline}
for all $(\mathbf{w},s) \in \mathbf{H}_{0}^{1}(\rho^{-1},\Omega) \times L^{2}(\rho^{-1},\Omega)/\mathbb{R}$, admits a unique solution. In addition, we have the stability bound
\begin{equation}
\label{eq:stability_lin_NS_weighted}
\|\nabla \boldsymbol \theta\|_{\mathbf{L}^{2}(\rho,\Omega)} + \|\xi\|_{L^{2}(\rho,\Omega)}
\lesssim (1 + \| \nabla\boldsymbol\Phi\|_{\mathbf{L}^{p}(\Omega)})
\|\mathbf{g} \|_{\mathbf{H}_0^1(\rho^{-1},\Omega)'},
\end{equation}
where $p \in (4/3 - \epsilon, 2)$ and $\epsilon = \epsilon(\Omega) > 0$.}
\end{theorem}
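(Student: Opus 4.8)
The plan is to prove the theorem by a duality/transposition argument combined with the $L^p$-theory already at our disposal, so that we never need to establish a weighted inf--sup condition for the weight $\rho$ directly. First I would reduce the weak problem \eqref{eq:lin_NS_weighted} to an operator equation. Following the map $T$ defined in \eqref{eq:T}, the key observation is that $T$ is an isomorphism from $\mathbf{V}(\Omega)\times L^2(\Omega)/\mathbb{R}$ onto $\mathbf{H}^{-1}(\Omega)$ (a consequence of Definition \ref{def:reg_sol}), and moreover — by the $L^p$-regularity result behind \eqref{eq:weak_NS_delta_p}--\eqref{eq:NS_p_stab} applied to the \emph{linearized} system — $T$ restricts to an isomorphism from $(\mathbf{V}(\Omega)\cap\mathbf{W}_0^{1,p}(\Omega))\times(L^p(\Omega)/\mathbb{R})$ onto $\mathbf{W}^{-1,p}(\Omega)$, with the stability constant controlled by $1+\|\nabla\boldsymbol\Phi\|_{\mathbf{L}^p(\Omega)}$ via a fixed-point/perturbation estimate for the convection terms $\boldsymbol\Phi\otimes\boldsymbol\theta+\boldsymbol\theta\otimes\boldsymbol\Phi$. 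Here $p\in(4/3-\epsilon,2)$ is chosen so that the Sobolev embeddings make the trilinear convection term bounded; this is exactly the range in \eqref{eq:NS_p_stab}.

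Next I would transfer this to the weighted setting by transposition. Given $\mathbf{g}\in\mathbf{H}_0^1(\rho^{-1},\Omega)'$, I want to define $(\boldsymbol\theta,\xi)$ through its action against test data. By Theorem \ref{thm:embedding_result} (with $\omega=\rho^{-1}\in A_2$) there is $\varepsilon>0$ with $\mathbf{H}_0^1(\rho^{-1},\Omega)\hookrightarrow\mathbf{L}^{2+\varepsilon}(\Omega)$ and $\kappa>1$ with $\mathbf{H}_0^1(\rho^{-1},\Omega)\hookrightarrow\mathbf{W}_0^{1,\kappa}(\Omega)$; dually $\mathbf{W}^{-1,\kappa'}(\Omega)\hookrightarrow\mathbf{H}_0^1(\rho^{-1},\Omega)'$. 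One checks $\kappa'$ lands in the admissible range of the $L^p$-theory (shrinking $\alpha$ or $\varepsilon$ if needed), so the \emph{adjoint} linearized problem is well-posed in $\mathbf{W}^{1,\kappa}_0(\Omega)\times L^\kappa(\Omega)/\mathbb{R}$ for every right-hand side in $\mathbf{W}^{-1,\kappa}(\Omega)$. Then for each $\mathbf{F}\in\mathbf{L}^2(\rho^{-1},\Omega)$ (a dense/large enough class of test functions, identified with a subspace of $\mathbf{W}^{-1,\kappa}(\Omega)$ after composing with the divergence-free Leray projector) I solve the adjoint problem and define $\langle\boldsymbol\theta,\mathbf{F}\rangle:=\langle\mathbf{g},\mathbf{w}_{\mathbf{F}}\rangle_{\mathbf{H}_0^1(\rho^{-1},\Omega)',\mathbf{H}_0^1(\rho^{-1},\Omega)}$ where $\mathbf{w}_{\mathbf{F}}$ is the adjoint solution; the $L^p/L^\kappa$ stability bound for the adjoint, combined with the embedding constant $\|\rho^{1/2}\|_{L^\infty(\Omega)}$ from Lemma \ref{lemma:density_H-1_dualH01_omega-1}(ii)--(iii), shows this functional is bounded on $\mathbf{L}^2(\rho^{-1},\Omega)$, hence represented by an element $\nabla\boldsymbol\theta\in\mathbf{L}^2(\rho,\Omega)$, giving $\boldsymbol\theta\in\mathbf{H}_0^1(\rho,\Omega)$; the pressure $\xi\in L^2(\rho,\Omega)/\mathbb{R}$ is recovered by a weighted de Rham / inf--sup argument on the $A_2$ pair $(\rho,\rho^{-1})$. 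Testing \eqref{eq:lin_NS_weighted} then reduces to the adjoint identity, which holds by construction, so $(\boldsymbol\theta,\xi)$ is a solution. Uniqueness follows because a homogeneous solution $(\boldsymbol\theta,\xi)\in\mathbf{H}_0^1(\rho,\Omega)\times L^2(\rho,\Omega)/\mathbb{R}$ lies, via Lemma \ref{lemma:density_H-1_dualH01_omega-1}(i), in $\mathbf{H}_0^1(\Omega)\times L^2(\Omega)/\mathbb{R}$ (since $\rho$ is bounded, $\alpha>0$), where $T$ is injective by regularity of $\boldsymbol\Phi$.

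The stability bound \eqref{eq:stability_lin_NS_weighted} then drops out of the construction: $\|\nabla\boldsymbol\theta\|_{\mathbf{L}^2(\rho,\Omega)}=\sup_{\mathbf{F}}\langle\mathbf{g},\mathbf{w}_{\mathbf{F}}\rangle/\|\mathbf{F}\|_{\mathbf{L}^2(\rho^{-1},\Omega)}\le\|\mathbf{g}\|_{\mathbf{H}_0^1(\rho^{-1},\Omega)'}\sup_{\mathbf{F}}\|\nabla\mathbf{w}_{\mathbf{F}}\|_{\text{(appropriate norm)}}/\|\mathbf{F}\|_{\mathbf{L}^2(\rho^{-1},\Omega)}$, and the latter supremum is bounded by $C(1+\|\nabla\boldsymbol\Phi\|_{\mathbf{L}^p(\Omega)})$ from the adjoint $L^p$-stability estimate; the pressure term is controlled by the same quantity through the weighted inf--sup inequality. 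I expect the main obstacle to be the bookkeeping that makes the exponents consistent: one must simultaneously arrange that $\rho^{-1}\in A_2$ yields an $\varepsilon>0$ large enough (equivalently $\kappa'$ close enough to some exponent below $2$, or above $2$ on the dual side) that the $L^p$ linearized Navier--Stokes theory of \eqref{eq:weak_NS_delta_p} applies to the \emph{adjoint} convection operator, \emph{and} that the constant in that theory genuinely depends on $\boldsymbol\Phi$ only through $\|\nabla\boldsymbol\Phi\|_{\mathbf{L}^p(\Omega)}$ in the form claimed — this requires reexamining the perturbation argument of \cite[Section 3]{MR4257071} with the convection term split as a compact-plus-small perturbation and tracking the constant explicitly. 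A secondary technical point is justifying the weighted pressure recovery (a weighted analogue of Ne\v{c}as/Bogovskii), for which one invokes that $(\rho,\rho^{-1})$ is an $A_2$-pair and cites the weighted inf--sup condition underlying the well-posedness of \eqref{eq:weak_NS_delta}.
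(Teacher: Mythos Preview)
Your duality/transposition strategy matches the paper's spirit, but two concrete steps break.

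First, your uniqueness argument is backwards. You claim a homogeneous $(\boldsymbol\theta,\xi)\in\mathbf{H}_0^1(\rho,\Omega)\times L^2(\rho,\Omega)/\mathbb{R}$ lies in $\mathbf{H}_0^1(\Omega)\times L^2(\Omega)/\mathbb{R}$ ``since $\rho$ is bounded''. But boundedness of $\rho$ gives $\int|\nabla\mathbf{v}|^2\rho\le\|\rho\|_{L^\infty}\int|\nabla\mathbf{v}|^2$, i.e.\ $\mathbf{H}_0^1(\Omega)\hookrightarrow\mathbf{H}_0^1(\rho,\Omega)$, exactly the opposite direction (this is what Lemma~\ref{lemma:density_H-1_dualH01_omega-1}(i) records). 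Because $\rho$ vanishes on $\mathcal{D}$, the space $\mathbf{H}_0^1(\rho,\Omega)$ is strictly \emph{larger} than $\mathbf{H}_0^1(\Omega)$, so you cannot reduce to injectivity of $T$ on unweighted spaces. The paper obtains uniqueness by instead pairing $T_\rho(\boldsymbol\theta,\xi)=\mathbf{0}$ against the full range of the weighted adjoint map $S_\rho$, whose surjectivity onto $\mathbf{H}_0^1(\rho,\Omega)'$ it has already established.

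Second, the $L^p$ route does not yield the weighted adjoint regularity you need. For the pairing $\langle\mathbf{g},\mathbf{w}_{\mathbf{F}}\rangle$ to make sense you must have $\mathbf{w}_{\mathbf{F}}\in\mathbf{H}_0^1(\rho^{-1},\Omega)$ with norm controlled by the data. An embedding $\mathbf{W}_0^{1,q}(\Omega)\hookrightarrow\mathbf{H}_0^1(\rho^{-1},\Omega)$ requires, by H\"older, $\int\rho^{-q/(q-2)}<\infty$, i.e.\ $q>4/(2-\alpha)$; this blows up as $\alpha\to 2$ and is unavailable on Lipschitz domains, while the linearized theory you invoke only gives exponents in $(4/3,2)$. ``Shrinking $\alpha$'' is not permitted since the statement must hold for every $\alpha\in(0,2)$. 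The paper bypasses this obstacle: it first solves the adjoint in $\mathbf{H}_0^1(\Omega)$ via the isomorphism $S$, then rewrites the equation as a \emph{Stokes} problem with forcing in $\mathbf{H}_0^1(\rho,\Omega)'$ (the convection terms are controlled there using $\boldsymbol\Phi\in\mathbf{W}_0^{1,p}$ from \eqref{eq:NS_p_stab} together with Theorem~\ref{thm:embedding_result}) and applies the \emph{weighted Stokes well-posedness} of \cite[Theorem~17]{MR3906341} to bootstrap $(\mathbf{z},r)$ into $\mathbf{H}_0^1(\rho^{-1},\Omega)\times L^2(\rho^{-1},\Omega)/\mathbb{R}$ with the bound \eqref{eq:stability_phi_weighted}. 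With $S_\rho$ in hand, existence of $(\boldsymbol\theta,\xi)$ is then obtained by a density argument (approximate $\mathbf{g}$ by $\mathbf{g}_k\in\mathbf{H}^{-1}(\Omega)$ via Lemma~\ref{lemma:density_H-1_dualH01_omega-1}(iii), solve through $T$, bound uniformly in $\mathbf{H}_0^1(\rho,\Omega)$ by duality with $S_\rho$, and extract weak limits) rather than by the direct transposition you sketch.
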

\begin{proof}
\EO{We adapt the duality argument elaborated in the proof of \cite[Theorem 2.9]{MR3936891} to our weighted setting. To accomplish this task, we introduce the map
\begin{equation}\label{eq:T_omega}
\begin{split}
T_{\rho}: \mathbf{V}(\rho,\Omega) \times L^{2}(\rho,\Omega)/\mathbb{R} & \rightarrow \mathbf{H}_{0}^{1}(\rho^{-1},\Omega)', 
\\
(\boldsymbol\theta,\xi) & \mapsto -\nu \Delta \boldsymbol\theta + \text{div}\left(\boldsymbol\Phi \otimes \boldsymbol\theta ) + \text{div}(\boldsymbol\theta \otimes \boldsymbol\Phi \right) + \nabla \xi,
\end{split}
\end{equation}
where $\mathbf{V}(\rho,\Omega) = \{\mathbf{v} \in \mathbf{H}_{0}^{1}(\rho,\Omega): \textnormal{div } \mathbf{v} = 0  \text{ in } \Omega \}$, and prove that $T_{\rho}$ is an isomorphism on the basis of three steps.}

\EO{\emph{Step 1.} \emph{Well-posedness of the adjoint problem in} $\mathbf{H}_0^1(\Omega) \times L^2(\Omega)/\mathbb{R}$. Given $\boldsymbol\psi \in \mathbf{H}^{-1}(\Omega)$, we introduce the adjoint problem: Find $(\mathbf{z},r)$ such that
\begin{equation}\label{eq:adj_sys}
-\nu \Delta \mathbf{z} - (\boldsymbol\Phi \cdot \nabla)\mathbf{z} 
+ (\nabla \boldsymbol\Phi)^{\intercal}\mathbf{z} + \nabla r 
= 
\boldsymbol\psi \ \text{in } \Omega, 
\,\,
\text{div } \mathbf{z} = 0 \ \text{in } \Omega, 
\,\,
 \mathbf{z} = \boldsymbol{0} \ \text{on } \partial \Omega.
\end{equation}
We also introduce a suitable linear map associated to the system \eqref{eq:adj_sys}:
\begin{align}\label{eq:S}
\begin{split}
S: \mathbf{V}(\Omega) \times L^{2}(\Omega)/\mathbb{R} & \rightarrow \mathbf{H}^{-1}(\Omega), \\
(\mathbf{z},r) & \mapsto -\nu \Delta \mathbf{z} - (\boldsymbol\Phi \cdot \nabla)\mathbf{z} + (\nabla \boldsymbol\Phi)^{\intercal}\mathbf{z} + \nabla r.
\end{split}
\end{align}

In what follows, we prove that $S$ is an isomorphism. As a first step, we derive the bound
\begin{equation}\label{eq:step1_estim}
\| \nabla \mathbf{z}\|_{\mathbf{L}^{2}(\Omega)} + \|r\|_{L^{2}(\Omega)} \lesssim 
\|S(\mathbf{z},r)\|_{\mathbf{H}^{-1}(\Omega)} \quad \forall (\mathbf{z},r) \in \mathbf{V}(\Omega) \times L^{2}(\Omega)/\mathbb{R}.
\end{equation}
Let $\mathbf{g} \in \mathbf{H}^{-1}(\Omega)$ and let $(\mathbf{z},r) \in \mathbf{V}(\Omega) \times L^2(\Omega)/ \mathbb{R}$. Since $\boldsymbol\Phi$ is regular, the map $T$, defined in \eqref{eq:T}, is an isomorphism. Consequently, there exists $(\boldsymbol\theta,\xi) \in \mathbf{V}(\Omega) \times L^{2}(\Omega)/\mathbb{R}$ such that $T(\boldsymbol\theta,\xi) = \mathbf{g}$. Invoke the definitions of $T$ and $S$, given by \eqref{eq:T} and \eqref{eq:S}, respectively, integration by parts, and the fact that $\textrm{div }\boldsymbol\theta = \textrm{div }\boldsymbol\Phi = \textrm{div }\mathbf{z} = 0$ to arrive at
\begin{multline*}
\langle \mathbf{g},\mathbf{z} \rangle = \langle T(\boldsymbol\theta,\xi),\mathbf{z} \rangle = \langle -\nu \Delta \boldsymbol\theta + \text{div}(\boldsymbol\Phi \otimes \boldsymbol\theta) + \text{div}(\boldsymbol\theta \otimes \boldsymbol\Phi) + \nabla \xi , \mathbf{z} \rangle 
\\
= \langle -\nu \Delta \mathbf{z} +  (\nabla \boldsymbol\Phi)^{\intercal} \mathbf{z}  - (\boldsymbol\Phi \cdot \nabla)\mathbf{z} + \nabla r, \boldsymbol\theta \rangle 
= \langle S(\mathbf{z},r), \boldsymbol\theta\rangle 
\\
\lesssim 
\|S(\mathbf{z},r)\|_{\mathbf{H}^{-1}(\Omega)}\|\mathbf{g}\|_{\mathbf{H}^{-1}(\Omega)},
\end{multline*}
where, in the last step, we have utilized the bound $\| \nabla \boldsymbol\theta \|_{\mathbf{L}^2(\Omega)} \lesssim \|\mathbf{g}\|_{\mathbf{H}^{-1}(\Omega)}$; the latter follows from the fact that $\boldsymbol\Phi$ is regular (cf. Definition \ref{def:reg_sol}). Since $\mathbf{g}$ and $(\mathbf{z},r)$ are arbitrary, we can thus conclude that, for every $(\mathbf{z},r) \in \mathbf{V}(\Omega) \times L^2(\Omega)/ \mathbb{R}$, we have
\begin{equation}\label{eq:estim_phi}
\| \nabla \mathbf{z}\|_{\mathbf{L}^{2}(\Omega)} \lesssim \| S(\mathbf{z},r) \|_{\mathbf{H}^{-1}(\Omega)}.
\end{equation}
It remains to bound $\|r\|_{L^{2}(\Omega)}$. Invoke the definition of $S$ given in \eqref{eq:S} to obtain
\begin{multline}\label{eq:estim_pi}
\|\nabla r\|_{\mathbf{H}^{-1}(\Omega)} \leq \|S(\mathbf{z},r)\|_{\mathbf{H}^{-1}(\Omega)} + \|\nu \Delta \mathbf{z}\|_{\mathbf{H}^{-1}(\Omega)} \\
+ \|(\boldsymbol\Phi \cdot \nabla)\mathbf{z}\|_{\mathbf{H}^{-1}(\Omega)} + \|(\nabla \boldsymbol\Phi)^{\intercal}\mathbf{z}\|_{\mathbf{H}^{-1}(\Omega)}.
\end{multline}
It is clear that $\| \Delta \mathbf{z}\|_{\mathbf{H}^{-1}(\Omega)} \leq \|\nabla \mathbf{z}\|_{\mathbf{L}^{2}(\Omega)}$. To bound the first convective term on the right-hand side of \eqref{eq:estim_pi}, we invoke H\"older's inequality, the standard Sobolev embedding $\mathbf{H}_{0}^{1}(\Omega) \hookrightarrow \mathbf{L}^{\beta}(\Omega)$, which holds for every $\beta < \infty$, and the first embedding result of Theorem \ref{thm:embedding_result}. These arguments reveal that
\[
\| (\boldsymbol\Phi \cdot \nabla)\mathbf{z} \|_{\mathbf{H}^{-1}(\Omega)} 
 \lesssim \|\boldsymbol\Phi\|_{\mathbf{L}^{2+\varepsilon}(\Omega)}\|\nabla \mathbf{z}\|_{\mathbf{L}^{2}(\Omega)}
\lesssim 
 \| \nabla \boldsymbol\Phi\|_{\mathbf{L}^{2}(\rho,\Omega)}\|\nabla \mathbf{z}\|_{\mathbf{L}^{2}(\Omega)}.
\]
Here, $\varepsilon >0$ is as in the statement of Theorem \ref{thm:embedding_result}. The second convective term on the right-hand side of \eqref{eq:estim_pi} can be controlled as follows:
\begin{multline}\label{eq:bound_conv_term_H-1}
\| (\nabla\boldsymbol\Phi)^{\intercal}\mathbf{z} \|_{\mathbf{H}^{-1}(\Omega)} 
 \leq 
\sup_{\mathbf{v} \in \mathbf{H}_{0}^{1}(\Omega)}\dfrac{\|\nabla\boldsymbol\Phi\|_{\mathbf{L}^{2}(\rho,\Omega)}\|\mathbf{z}\|_{\mathbf{L}^{\kappa}(\Omega)}\|\mathbf{v}\|_{\mathbf{L}^{\mu}(\Omega)}\|\rho^{-\frac{1}{2}}\|_{\mathbf{L}^{\varsigma}(\Omega)} }{\|\nabla \mathbf{v}\|_{\mathbf{L}^{2}(\Omega)}} \\
\lesssim
\|\nabla\boldsymbol\Phi\|_{\mathbf{L}^{2}(\rho,\Omega)}\|\nabla\mathbf{z}\|_{\mathbf{L}^{2}(\Omega)},
\qquad
\kappa^{-1} + \mu^{-1} + \varsigma^{-1} = 2^{-1},
\end{multline}
upon utilizing $\mathbf{H}_{0}^{1}(\Omega) \hookrightarrow \mathbf{L}^{\beta}(\Omega)$ ($\beta < \infty$). To bound $\|\rho^{-\frac{1}{2}}\|_{\mathbf{L}^{\varsigma}(\Omega)}$ we invoke
%%the reverse H\"older's inequality of 
Proposition \ref{pro:reverse}
%Theorem \ref{thm:embedding_result}, 
and the fact that $\varsigma$ can be written as $\varsigma = 2 + \delta$ for $\delta > 0$ arbitrarily small. In fact, we have
\begin{equation}\label{eq:bound_rho_-1/2}
\| \rho^{-\frac{1}{2}} \|^{\varsigma}_{\mathbf{L}^{\varsigma}(\Omega)}
=
\left( \int_{\Omega} \rho^{-1 - \frac{\delta }{2} } \right)
\lesssim |\Omega|^{-\frac{\delta }{2}} \left( \int_{\Omega} \rho^{-1} \right)^{1+\frac{\delta }{2}}.
\end{equation}
%In the latter inequality we have utilized the reverse H\"older's inequality of Proposition \ref{pro:reverse} and the fact that $\varsigma$ can be written as $\varsigma = 2 + \delta$, for $\delta > 0$ arbitrarily small.
Replace \eqref{eq:estim_phi} and the estimates previously obtained into \eqref{eq:estim_pi} to obtain
\begin{equation*}
\begin{aligned}
\|\nabla r\|_{\mathbf{H}^{-1}(\Omega)}
& \lesssim
\|S(\mathbf{z},r)\|_{\mathbf{H}^{-1}(\Omega)} + 
\left(\nu 
+
 \|\nabla \boldsymbol\Phi\|_{\mathbf{L}^{2}(\rho,\Omega)} 
 \right)\|\nabla \mathbf{z}\|_{\mathbf{L}^{2}(\Omega)}
\\
& \lesssim
\left(1 + \nu 
+
 \|\nabla \boldsymbol\Phi\|_{\mathbf{L}^{2}(\rho,\Omega)} 
 \right)
\|S(\mathbf{z},r)\|_{\mathbf{H}^{-1}(\Omega)}. 
\end{aligned}
\end{equation*}
This bound, together with \eqref{eq:estim_phi}, allows us to obtain \eqref{eq:step1_estim}. With estimate  \eqref{eq:step1_estim} at hand, we can thus deduce that the linear and bounded operator $S$ is injective with a closed range in $\mathbf{H}^{-1}(\Omega)$. The surjectivity of $S$ can be obtained with similar arguments to the ones developed in the \emph{Step 1} of the proof of \cite[Theorem 2.9]{MR3936891}.}
%From the step 1 of the proof of \cite[Theorem 2.9]{MR3162396} it follows that $S$ is an isomorphism.

\EO{\emph{Step 2}. \emph{Well-posedness of the adjoint problem in $\mathbf{H}_{0}^{1}(\rho^{-1},\Omega) \times L^{2}(\rho^{-1},\Omega)/\mathbb{R}$}. The purpose of this step is to prove that problem \eqref{eq:adj_sys} is well-posed in the space $\mathbf{H}_{0}^{1}(\rho^{-1},\Omega) \times L^{2}(\rho^{-1},\Omega)/\mathbb{R} $ whenever $\boldsymbol \psi \in \mathbf{H}_0^1(\rho,\Omega)'$. To accomplish this task, we introduce the map
\begin{align}\label{eq:Sp}
\begin{split}
S_{\rho}: \mathbf{V}(\rho^{-1},\Omega) \times L^{2}(\rho^{-1},\Omega)/\mathbb{R} & \rightarrow \mathbf{H}_0^1(\rho,\Omega)', \\
(\mathbf{z},r) & \mapsto -\nu \Delta \mathbf{z} - (\boldsymbol\Phi \cdot \nabla)\mathbf{z} + (\nabla \boldsymbol\Phi)^{\intercal}\mathbf{z} + \nabla r.
\end{split}
\end{align}
In what follows, we prove that the linear map $S_{\rho}$ is an isomorphism.

\emph{Step 2.1}. $S_{\rho}$ \emph{is surjective:} Let $\boldsymbol\psi \in \mathbf{H}_{0}^{1}(\rho,\Omega)'$. Since $ \mathbf{H}_{0}^{1}(\rho,\Omega)' \subset \mathbf{H}^{-1}(\Omega)$ (cf. Lemma \ref{lemma:density_H-1_dualH01_omega-1}), we immediately deduce that $\boldsymbol\psi \in \mathbf{H}^{-1}(\Omega)$. As a consequence, the well-posedness results obtained in \emph{Step 1} yield the existence of a unique solution $(\mathbf{z},r) \in \mathbf{H}_0^1(\Omega) \times L^{2}(\Omega)/\mathbb{R}$ to system \eqref{eq:adj_sys} together with a suitable stability bound. We now prove that $(\mathbf{z},r)$ belongs to $\mathbf{H}_{0}^{1}(\rho^{-1},\Omega) \times L^{2}(\rho^{-1},\Omega)/\mathbb{R}$. To accomplish this task, we first observe that $(\mathbf{z},r)$ can be seen as the solution to the Stokes problem
\begin{equation*}
-\nu \Delta \mathbf{z} + \nabla r = \boldsymbol\psi+ (\boldsymbol\Phi \cdot \nabla)\mathbf{z} - (\nabla \boldsymbol\Phi)^{\intercal}\mathbf{z} \ \text{in } \Omega, 
\quad
 \text{div } \mathbf{z} = 0 \ \text{in } \Omega, \quad \mathbf{z} = \boldsymbol{0} \ \text{on } \partial \Omega,
\end{equation*}
and notice that the forcing term of the momentum equation belongs to $\mathbf{H}_{0}^{1}(\rho,\Omega)'$. In fact, the control of the convective term $(\boldsymbol\Phi \cdot \nabla)\mathbf{z}$ in $\mathbf{H}_{0}^{1}(\rho,\Omega)'$ is as follows:
\begin{equation*}
\begin{aligned}
\|(\boldsymbol\Phi \cdot \nabla)\mathbf{z} \|_{\mathbf{H}_{0}^{1}(\rho,\Omega)'} 
& 
\leq 
\sup_{\mathbf{v} \in \mathbf{H}_{0}^{1}(\rho,\Omega)}
\dfrac{
\|\boldsymbol\Phi\|_{\mathbf{L}^{\mu}(\Omega)}\|\nabla\mathbf{z}\|_{\mathbf{L}^{2}(\Omega)}\|\mathbf{v}\|_{\mathbf{L}^{\varsigma}(\Omega)}}{\|\nabla \mathbf{v}\|_{\mathbf{L}^{2}(\rho,\Omega)}} \\
& 
\lesssim 
\| \nabla \boldsymbol\Phi\|_{\mathbf{L}^{p}(\Omega)}
\|\nabla\mathbf{z}\|_{\mathbf{L}^{2}(\Omega)},
\quad
\mu^{-1} + \varsigma^{-1} = 2^{-1},
\end{aligned}
\end{equation*}
upon setting $\varsigma = 2 +\varepsilon$ with $\varepsilon$ being dictated by Theorem \ref{thm:embedding_result}. Notice that, we have also utilized the fact that $\boldsymbol\Phi \in \mathbf{W}_0^{1,p}(\Omega)$ for every $p \in (4/3 - \epsilon, 2)$, where $\epsilon = \epsilon(\Omega) > 0$; see estimate \eqref{eq:NS_p_stab}. The second convective term can be bounded in light of similar arguments:
\begin{equation*}
\begin{aligned}
\|(\nabla\boldsymbol\Phi)^{\intercal} \mathbf{z} \|_{\mathbf{H}_{0}^{1}(\rho,\Omega)'} 
&
\leq 
\sup_{\mathbf{v} \in \mathbf{H}_{0}^{1}(\rho,\Omega)}
\dfrac{
\| \nabla\boldsymbol\Phi\|_{\mathbf{L}^{p}(\Omega)} \| \mathbf{z} \|_{\mathbf{L}^{\mu}(\Omega)} \| \mathbf{v} \|_{\mathbf{L}^{\varsigma}(\Omega)}
}
{\|\nabla \mathbf{v}\|_{\mathbf{L}^{2}(\rho,\Omega)}} 
\\
& \lesssim 
\| \nabla\boldsymbol\Phi\|_{\mathbf{L}^{p}(\Omega)} 
\|\nabla\mathbf{z}\|_{\mathbf{L}^{2}(\Omega)},
\quad
p^{-1}
+ \mu^{-1} + \varsigma^{-1} = 1,
\end{aligned}
\end{equation*}
upon setting, again, $\varsigma = 2 +\varepsilon$ with $\varepsilon$ being dictated by Theorem \ref{thm:embedding_result}. Notice that we have also utilized the standard Sobolev embedding $\mathbf{H}_{0}^{1}(\Omega) \hookrightarrow \mathbf{L}^{\beta}(\Omega)$, which holds for every $\beta < \infty$. Having proved that the forcing term of the momentum equation belongs to $\mathbf{H}_{0}^{1}(\rho,\Omega)'$, we invoke \cite[Theorem 17]{MR3906341} to conclude that $(\mathbf{z},r) \in \mathbf{V}(\rho^{-1},\Omega) \times L^{2}(\rho ^{-1},\Omega)/\mathbb{R}$ together with the bound
\begin{equation}
\begin{aligned}
\|\nabla \mathbf{z}\|_{\mathbf{L}^{2}(\rho^{-1},\Omega)} + \|r\|_{L^{2}(\rho^{-1},\Omega)}
\lesssim
\| \boldsymbol\psi \|_{\mathbf{H}_0^{1}(\rho,\Omega)'} (1 + \| \nabla\boldsymbol\Phi\|_{\mathbf{L}^{p}(\Omega)}),
\end{aligned}
\label{eq:stability_phi_weighted}
\end{equation}
where we have utilized the bounds $\|\nabla\mathbf{z}\|_{\mathbf{L}^{2}(\Omega)} \lesssim \| \boldsymbol\psi \|_{\mathbf{H}^{-1}(\Omega)} \lesssim \| \boldsymbol\psi \|_{\mathbf{H}_0^{1}(\rho,\Omega)'} $. The first bound follows from the results of \emph{Step 1} while the second one follows from the item (ii) in Lemma \ref{lemma:density_H-1_dualH01_omega-1}. We have thus proved that $S_{\rho}$ is surjective.

\emph{Step 2.2}. $S_{\rho}$ \emph{is injective}: Let $(\mathbf{z},r) \in \mathbf{V}(\rho^{-1},\Omega) \times L^{2}(\rho ^{-1},\Omega)/\mathbb{R}$ be such that $S_{\rho}(\mathbf{z},r) = \mathbf{0}$. Since $\mathbf{V}(\rho^{-1},\Omega) \times L^{2}(\rho ^{-1},\Omega)/\mathbb{R} \subset \mathbf{V}(\Omega) \times L^{2}(\Omega)/\mathbb{R}$, we have that $S(\mathbf{z},r) = \mathbf{0}$. The fact that $S$ is an isomorphism allow us to conclude that $(\mathbf{z},r) = (\mathbf{0},0)$.}

\EO{\emph{Step 3}. \emph{Well-posedness of problem \eqref{eq:lin_NS_weighted}.}
We prove that problem \eqref{eq:lin_NS_weighted} is well-posed. To accomplish this task, we proceed on the basis of a density argument. Let $\mathbf{g} \in \mathbf{H}_{0}^{1}(\rho^{-1},\Omega)'$. Since $\mathbf{H}^{-1}(\Omega)$ is dense in $\mathbf{H}_{0}^{1}(\rho^{-1},\Omega)'$ (cf.~Lemma \ref{lemma:density_H-1_dualH01_omega-1}), there exists a sequence $\{\mathbf{g}_{k}\}_{k \in \mathbb{N}} \subset \mathbf{H}^{-1}(\Omega)$ such that $\mathbf{g}_{k} \rightarrow \mathbf{g}$ in $\mathbf{H}_{0}^{1}(\rho^{-1},\Omega)'$ as $k \uparrow \infty$. On the other hand, since $\boldsymbol \Phi$ is regular, the map $T$ is an isomorphism. Consequently, for every $k \in \mathbb{N}$, there exists a unique pair $(\boldsymbol \theta_k,\xi_k) \in \mathbf{H}_0^1(\Omega) \times L^2(\Omega)/\mathbb{R}$ that solves problem \eqref{eq:lin_NS} with $\mathbf{g}$ being replaced by $\mathbf{g}_k$.}

\EO{\emph{Step 3.1}. \emph{$\{ (\boldsymbol \theta_k,\xi_k) \}_{k \in \mathbb{N}}$ is bounded in $\mathbf{H}_{0}^{1}(\rho,\Omega) \times L^{2}(\rho,\Omega)/\mathbb{R}$.} Let $\boldsymbol \psi \in \mathbf{H}_0^1(\rho,\Omega)'$. The results obtained in \emph{Step 2} guarantee that $S_{\rho}$, which is defined in \eqref{eq:Sp}, is an isomorphism. As a consequence, there exists a pair $(\mathbf{z},r) \in \mathbf{V}(\rho^{-1},\Omega) \times L^{2}(\rho ^{-1},\Omega)/\mathbb{R}$ such that $\boldsymbol \psi = S_{\rho}(\mathbf{z},r)$. Let us now observe that
\begin{equation}
\langle \boldsymbol \psi, \boldsymbol \theta_k \rangle 
=
\langle S_{\rho}(\mathbf{z},r),  \boldsymbol\theta_k \rangle 
=
\langle \mathbf{z}, T(\boldsymbol \theta_k,\xi_k)\rangle 
=
\langle  \mathbf{g}_k, \mathbf{z} \rangle. 
\label{eq:duality_trick}
\end{equation}
With the previous identity at hand, the stability bound \eqref{eq:stability_phi_weighted} reveals that
\begin{align*}
|\langle \boldsymbol \psi, \boldsymbol \theta_k \rangle | & \leq
\|\mathbf{g}_k \|_{\mathbf{H}_0^1(\rho^{-1},\Omega)'}
\| \nabla \mathbf{z}\|_{\mathbf{L}^2(\rho^{-1},\Omega)} \\
& \lesssim
\|\mathbf{g}_k \|_{\mathbf{H}_0^1(\rho^{-1},\Omega)'}
\| \boldsymbol \psi \|_{\mathbf{H}_0^1(\rho,\Omega)'}(1 + \|\nabla \boldsymbol{\Phi}\|_{\mathbf{L}^p(\Omega)}).
\end{align*}
The arbitrariness of $\boldsymbol \psi$ allows us to deduce the following bound for $\boldsymbol \theta_k $ and $k \in \mathbb{N}$:
$
\| \nabla \boldsymbol \theta_k  \|_{\mathbf{L}^2(\rho,\Omega)}
\lesssim
\| \mathbf{g}_k \|_{\mathbf{H}_0^1(\rho^{-1},\Omega)'}(1 + \|\nabla \boldsymbol{\Phi}\|_{\mathbf{L}^p(\Omega)}).
$
This estimate and the inf-sup condition on weighted spaces of \cite[Lemma 6.1]{MR4081912} yield the boundedness of the sequence $\{ \| \xi_k \|_{L^2(\rho,\Omega)} \}_{k\in \mathbb{N}}$ in $\mathbb{R}$.

\emph{Step 3.2}. \emph{Existence of solutions for \eqref{eq:lin_NS_weighted}.} Since 
the sequence $\{ (\boldsymbol \theta_k,\xi_k)  \}_{k \in \mathbb{N}}$ is bounded in $\mathbf{H}_{0}^{1}(\rho,\Omega) \times L^{2}(\rho,\Omega)/\mathbb{R}$, we deduce the existence of a nonrelabeled subsequence $\{ (\boldsymbol \theta_k,\xi_k) \}_{k \in \mathbb{N}}$ such that
\[
\boldsymbol \theta_k \rightharpoonup \boldsymbol \theta \textrm{ in } \mathbf{H}_{0}^{1}(\rho,\Omega),
\qquad
\xi_k \rightharpoonup \xi \textrm{ in } L^{2}(\rho,\Omega)/\mathbb{R},
\qquad
k \uparrow \infty.
\]
In what follows, we show that $(\boldsymbol \theta,\xi) \in \mathbf{H}_{0}^{1}(\rho,\Omega) \times L^{2}(\rho,\Omega)/\mathbb{R}$ solves the system \eqref{eq:lin_NS_weighted}. To accomplish this task, we let $(\mathbf{w},s)$ be an arbitrary pair in $\mathbf{H}_{0}^{1}(\rho^{-1},\Omega) \times L^{2}(\rho^{-1},\Omega)/\mathbb{R}$ and observe that $\left| \int_{\Omega} \nabla (\boldsymbol \theta - \boldsymbol \theta_{k}) : \nabla \mathbf{w} \right| \rightarrow 0$ and that
\[
\left| \int_{\Omega} (\boldsymbol \Phi \otimes \boldsymbol \theta - \boldsymbol \Phi \otimes \boldsymbol \theta_{k}) : \nabla \mathbf{w} \right|
\leq 
\| \boldsymbol \Phi   \|_{\mathbf{L}^4(\rho,\Omega)}
\| \boldsymbol \theta - \boldsymbol \theta_{k} \|_{\mathbf{L}^4(\rho,\Omega)}
\| \nabla \mathbf{w} \|_{\mathbf{L}^2(\rho^{-1},\Omega)}
\rightarrow 0
\]
as $k \uparrow \infty$; the second convergence result being a consequence of the weighted compact Sobolev embedding $\mathbf{H}_0^1(\rho,\Omega) \hookrightarrow \mathbf{L}^4(\rho,\Omega)$ \cite[Theorem 4.12]{MR2797702} (see also \cite[Proposition 2]{MR3998864}). Similarly, we have
\[
\left| \int_{\Omega} (\boldsymbol \theta - \boldsymbol \theta_{k}) \otimes \boldsymbol \Phi : \nabla \mathbf{w} \right|,
\quad
\left| \int_{\Omega} (\xi - \xi_k) \textrm{div } \mathbf{w} \right|,
\quad
\left| \int_{\Omega} s \textrm{div } (\boldsymbol \theta - \boldsymbol \theta_{k} )\right| \rightarrow 0
\]
% $\left| \int_{\Omega} (\boldsymbol \theta \otimes \boldsymbol \Phi - \boldsymbol \theta_{k} \otimes \boldsymbol \Phi) : \nabla \mathbf{w} \right| \rightarrow 0$, $\left| \int_{\Omega} (\xi - \xi_k) \textrm{div } \mathbf{w} \right| \rightarrow 0$, 
% and $| \int_{\Omega} s \textrm{div } (\boldsymbol \theta - \boldsymbol \theta_{k} ) | \rightarrow 0$ 
 as $k \uparrow \infty$. We have thus proved that $(\boldsymbol \theta,\xi) \in \mathbf{H}_{0}^{1}(\rho,\Omega) \times L^{2}(\rho,\Omega)/\mathbb{R}$ is a solution to problem \eqref{eq:lin_NS_weighted}.}

\EO{\emph{Step 3.3.} \emph{Stability bound.} Let $\boldsymbol \psi \in \mathbf{H}_0^1(\rho,\Omega)'$ and let $(\mathbf{z},r) \in \mathbf{H}_0^1(\rho^{-1},\Omega) \times L^{2}(\rho ^{-1},\Omega)/\mathbb{R}$ be the unique solution to \eqref{eq:adj_sys}. Similar arguments to the ones utilized to obtain \eqref{eq:duality_trick} combined with the stability bound \eqref{eq:stability_phi_weighted} yield
\begin{multline*}
\langle \boldsymbol \psi, \boldsymbol \theta \rangle 
=
\langle  \mathbf{g}, \mathbf{z} \rangle
\leq 
\|\mathbf{g} \|_{\mathbf{H}_0^1(\rho^{-1},\Omega)'}
\| \nabla \mathbf{z}\|_{\mathbf{L}^2(\rho^{-1},\Omega)}
\\
\lesssim
\|\mathbf{g} \|_{\mathbf{H}_0^1(\rho^{-1},\Omega)'}
\| \boldsymbol\psi \|_{\mathbf{H}_0^{1}(\rho,\Omega)'} (1 + \| \nabla\boldsymbol\Phi\|_{\mathbf{L}^{p}(\Omega)}).
\end{multline*}
Since $\boldsymbol \psi$ is an arbitrary element of $\mathbf{H}_0^1(\rho,\Omega)'$, we can thus deduce that 
\[
\| \nabla \boldsymbol \theta \|_{\mathbf{L}^2(\rho,\Omega)} \lesssim \|\mathbf{g} \|_{\mathbf{H}_0^1(\rho^{-1},\Omega)'}(1 + \| \nabla\boldsymbol\Phi\|_{\mathbf{L}^{p}(\Omega)}).
\]
We now utilize the inf-sup condition on weighted spaces of \cite[Lemma 6.1]{MR4081912} to control the pressure: $\| \xi \|_{L^2(\rho,\Omega)} \lesssim \|\mathbf{g} \|_{\mathbf{H}_0^1(\rho^{-1},\Omega)'}(1 + \| \nabla\boldsymbol\Phi\|_{\mathbf{L}^{p}(\Omega)})$.

\emph{Step 3.4} \emph{The map $T_{\rho}$ is injective}. Let $(\boldsymbol \theta,\xi) \in \mathbf{H}_{0}^{1}(\rho,\Omega) \times L^{2}(\rho,\Omega)/\mathbb{R}$ be such that $T_{\rho}(\boldsymbol \theta,\xi) = \mathbf{0}$. This immediately implies that $\langle T_{\rho}(\boldsymbol \theta,\xi), \mathbf{z} \rangle = 0$ for every $\mathbf{z} \in \mathbf{H}_0^1(\rho^{-1},\Omega)$.  An argument based on integration by parts thus reveals that $\langle \boldsymbol \theta, S_{\rho}(\mathbf{z},r) \rangle = 0$ for every $(\mathbf{z},r) \in \mathbf{H}_{0}^{1}(\rho^{-1},\Omega) \times L^{2}(\rho^{-1},\Omega)/\mathbb{R}$. This implies that $\boldsymbol \theta = \mathbf{0}$.  Since $T_{\rho}(\boldsymbol \theta,\xi) = \mathbf{0}$, we invoke the definition of $T_{\rho}$ to deduce that $\nabla \xi = \mathbf{0}$ and thus that $\xi = 0$. This concludes the proof. \qed}
\end{proof}

\begin{remark}[\EO{Well-Posedness in $\mathbf{W}_{0}^{1,p}(\Omega) \times L^p(\Omega)/ \mathbb{R}$}]
\label{rem:reg_linearized_vel}
\EO{Let $\mathbf{g} \in \mathbf{W}^{-1,p}(\Omega)$ and $p'$ be such that $1/p + 1/p' = 1$. Let us denote by $(\boldsymbol\theta,\xi)$ the weak solution to the system \eqref{eq:lin_NS_weighted} with
\[
\langle\mathbf{g},\mathbf{w}\rangle_{\mathbf{W}^{-1,p}(\Omega),\mathbf{W}_{0}^{1,p'}(\Omega)}
\]
as a forcing term. An adaptation of the arguments elaborated in the proof of Theorem \ref{thm:wellposed_lin_NS_weighted}, that are in turn inspired by the ones in \cite[Theorem 2.9]{MR3936891}, show that problem \ref{eq:lin_NS_weighted} is well posed in $\mathbf{W}_{0}^{1,p}(\Omega) \times L^p(\Omega)/ \mathbb{R}$ whenever $p$ belongs to $(4/3,2)$. This results holds under the assumption that $\Omega$ is Lipschitz and therefore improves on \cite[Theorem 2.9]{MR3936891} where $\partial \Omega \in C^2$. We notice that the only place in the proof of \cite[Theorem 2.9]{MR3936891} where such a regularity on $\Omega$ is needed is \cite[estimate (2.19)]{MR3936891}. Since $p \in (4/3,2)$ and thus $p' \in (2,4)$, \cite[estimate (2.19)]{MR3936891} on Lipschitz domains can be obtained upon utilizing \cite[Theorem 1.6, (1.52)]{MR2763343}.}
\end{remark}
%%%%%%%%%%%%%%%%%%%%%%%%%%%%%%%%%%%%%%%%%%%%%%%%%%%%%%%%%%%%
%%%%%%%%%%%%%%%%%%%%%%%%%%%%%%%%%%%%%%%%%%%%%%%%%%%%%%%%%%%%
%%%%%%%%%%%%%%%%%%%%%%%%%%%%%%%%%%%%%%%%%%%%%%%%%%%%%%%%%%%%
%%%%%%%%%%%%%%%%%%%%%%%%%%%%%%%%%%%%%%%%%%%%%%%%%%%%%%%%%%%%

\subsubsection{\EO{Differentiability Properties of a Solution Operator}}
\label{sec:sol_operator}

\EO{In this section, we investigate differentiability properties for a solution map associated to system \eqref{def:state_eq} around a regular velocity field $\mathbf{y}$. We present some of these properties in  the following theorem.}

\begin{theorem}[Differentiability of $\mathcal{U} \mapsto (\mathbf{y},p)$]\label{thm:diff_prop}
\EO{Let $\bar{\mathcal{U}} \in \mathbb{U}_{ad}$ and let $(\bar{\mathbf{y}},\bar{p}) \in \mathbf{H}_{0}^{1}(\rho,\Omega) \times L^{2}(\rho,\Omega)/\mathbb{R}$ be a solution to \eqref{def:state_eq}. If $\bar{\mathbf{y}}$ is regular, then there exist open neighborhoods $\mathcal{O}(\bar{\mathcal{U}}) \subset [\mathbb{R}^{2}]^{\ell} $, $\mathcal{O}(\bar{\mathbf{y}}) \subset \mathbf{V}(\rho,\Omega) $, and $\mathcal{O}(\bar{p}) \subset L^{2}(\rho,\Omega)/\mathbb{R}$ of $ \bar{\mathcal{U}}$, $\bar{\mathbf{y}}$, and $\bar{p}$, respectively, and a map of class $C^2$,
\begin{equation}\label{eq:sol_op_Q}
Q: \mathcal{O}(\bar{\mathcal{U}}) \to \mathcal{O}(\bar{\mathbf{y}}) \times \mathcal{O}(\bar{p}),
\end{equation}
such that $Q(\bar{\mathcal{U}}) = (\bar{\mathbf{y}},\bar{p})$. In addition, the neighborhood $\mathcal{O}(\bar{\mathcal{U}})$ can be taken such that, for each $\mathcal{U} \in \mathcal{O}(\bar{\mathcal{U}})$,
\begin{itemize}
\item[\textnormal{(i)}] the pair $(\mathbf{y}_{\mathcal{U}},p_{\mathcal{U}}) = Q(\mathcal{U})$ uniquely solves \eqref{def:state_eq} in $\mathcal{O}(\bar{\mathbf{y}}) \times \mathcal{O}(\bar{p})$,
\item[\textnormal{(ii)}] the map $Q'(\mathcal{U}):[\mathbb{R}^{2}]^{\ell} \rightarrow \mathbf{V}(\rho,\Omega) \times L^{2}(\rho,\Omega)/\mathbb{R}$ is an isomorphism,
\item[\textnormal{(iii)}] if $\mathcal{V} \in [\mathbb{R}^{2}]^{\ell}$, then $(\boldsymbol\theta,\xi) := Q'(\mathcal{U})\mathcal{V}
%% \in \mathbf{V}(\rho,\Omega) \times L^{2}(\rho,\Omega)/\mathbb{R}
$ corresponds to the unique solution to \eqref{eq:lin_NS_weighted} with $\boldsymbol\Phi$ and $\mathbf{g}$ being replaced by $\mathbf{y}_{\mathcal{U}}$ and $\sum_{t \in \mathcal{D}}\mathbf{v}_{t}\delta_{t}$, respectively, and
\item[\textnormal{(iv)}] if $\mathcal{V}_{1},\mathcal{V}_{2} \in [\mathbb{R}^{2}]^{\ell}$, then $(\boldsymbol{\psi},\gamma):=Q''(\mathcal{U})\mathcal{V}_{1}\mathcal{V}_{2}$ corresponds to the unique solution to
\begin{multline}\label{eq:second_der_sol_map_S}
\int_{\Omega} ( [\nu \nabla \boldsymbol{\psi} - 
\mathbf{y}_{\mathcal{U}} \otimes \boldsymbol{\psi}
-
\boldsymbol{\psi} \otimes \mathbf{y}_{\mathcal{U}}]: \nabla\mathbf{v}
- 
\gamma\textnormal{div }\mathbf{v}) \\
 =    \int_{\Omega}(\boldsymbol{\theta}_{\mathcal{V}_{1}} \otimes \boldsymbol{\theta}_{\mathcal{V}_{2}} + \boldsymbol{\theta}_{\mathcal{V}_{2}} \otimes\boldsymbol{\theta}_{\mathcal{V}_{1}}) : \nabla \mathbf{v},\qquad 
\int_{\Omega} q\textnormal{div }\boldsymbol{\psi}  = 0,
\end{multline}
for all $(\mathbf{v},q)\in \mathbf{H}_{0}^{1}(\rho^{-1},\Omega) \times L^{2}(\rho^{-1},\Omega)/\mathbb{R}$. Here, $(\boldsymbol \theta_{\mathcal{V}_{i}},\xi_{\mathcal{V}_{i}})=Q'(\mathcal{U})\mathcal{V}_{i}$, with $i\in\{1,2\}$, corresponds to the unique solution to \eqref{eq:lin_NS_weighted} with $\boldsymbol\Phi$ and $\mathbf{g}$ being replaced by $\mathbf{y}_{\mathcal{U}}$ and $\sum_{t \in \mathcal{D}}\mathbf{v}_{t}\delta_{t}$, respectively.
\end{itemize}
}
\end{theorem}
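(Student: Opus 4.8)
The plan is to apply the implicit function theorem to a suitably defined operator whose zero set parametrizes solutions to the state equation near $(\bar{\mathcal{U}},\bar{\mathbf{y}},\bar{p})$. Concretely, I would define
\[
\mathcal{F}: [\mathbb{R}^{2}]^{\ell} \times \big(\mathbf{V}(\rho,\Omega) \times L^{2}(\rho,\Omega)/\mathbb{R}\big) \to \mathbf{H}_{0}^{1}(\rho^{-1},\Omega)',
\]
\[
\mathcal{F}(\mathcal{U},\boldsymbol\Phi,\zeta) := -\nu\Delta\boldsymbol\Phi + \mathrm{div}(\boldsymbol\Phi\otimes\boldsymbol\Phi) + \nabla\zeta - \sum_{t\in\mathcal{D}}\mathbf{u}_t\delta_t,
\]
understood in the weak sense of \eqref{eq:weak_NS_delta} tested against $\mathbf{H}_{0}^{1}(\rho^{-1},\Omega)$, together with the divergence constraint built into the choice of $\mathbf{V}(\rho,\Omega)$. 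The first task is to check that $\mathcal{F}$ is well defined and of class $C^{\infty}$ (in fact a polynomial of degree two in $(\mathcal{U},\boldsymbol\Phi,\zeta)$, hence $C^2$ certainly suffices). The only nonlinear term is the convective term $\mathrm{div}(\boldsymbol\Phi\otimes\boldsymbol\Phi)$; its mapping property into $\mathbf{H}_{0}^{1}(\rho^{-1},\Omega)'$ follows from the weighted Hölder/Sobolev estimate $\|\boldsymbol\Phi\otimes\boldsymbol\Phi:\nabla\mathbf{w}\|_{L^1(\Omega)}\leq\|\boldsymbol\Phi\|_{\mathbf{L}^{4}(\rho,\Omega)}^{2}\|\nabla\mathbf{w}\|_{\mathbf{L}^{2}(\rho^{-1},\Omega)}$ combined with the weighted embedding $\mathbf{H}_0^1(\rho,\Omega)\hookrightarrow\mathbf{L}^4(\rho,\Omega)$ already invoked in the proof of Theorem \ref{thm:wellposed_lin_NS_weighted}; bilinearity gives smoothness immediately, and the term $\sum_t\mathbf{u}_t\delta_t$ is linear in $\mathcal{U}$ and lies in $\mathbf{H}_{0}^{1}(\rho^{-1},\Omega)'$ by the discussion in Section \ref{sec:particular_weight}.

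Next I would identify the partial derivative $D_{(\boldsymbol\Phi,\zeta)}\mathcal{F}(\bar{\mathcal{U}},\bar{\mathbf{y}},\bar{p})$. A direct computation shows that, applied to $(\boldsymbol\theta,\xi)$, this linearization is exactly the left-hand side of \eqref{eq:lin_NS_weighted} with $\boldsymbol\Phi$ replaced by $\bar{\mathbf{y}}$, i.e. it equals $T_{\rho}(\boldsymbol\theta,\xi)$ in the notation of \eqref{eq:T_omega}. By Theorem \ref{thm:wellposed_lin_NS_weighted}, since $\bar{\mathbf{y}}$ is regular, the map $T_{\rho}$ is an isomorphism from $\mathbf{V}(\rho,\Omega)\times L^{2}(\rho,\Omega)/\mathbb{R}$ onto $\mathbf{H}_{0}^{1}(\rho^{-1},\Omega)'$. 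The implicit function theorem on Banach spaces then produces open neighborhoods $\mathcal{O}(\bar{\mathcal{U}})$, $\mathcal{O}(\bar{\mathbf{y}})\times\mathcal{O}(\bar{p})$ and a $C^2$ map $Q$ with $Q(\bar{\mathcal{U}})=(\bar{\mathbf{y}},\bar{p})$ such that $\mathcal{F}(\mathcal{U},Q(\mathcal{U}))=0$ and, shrinking if necessary, $Q(\mathcal{U})$ is the unique zero of $\mathcal{F}(\mathcal{U},\cdot)$ in that neighborhood; this is exactly assertion (i). Assertion (ii) follows because the implicit function theorem also gives $Q'(\mathcal{U}) = -[D_{(\boldsymbol\Phi,\zeta)}\mathcal{F}(\mathcal{U},Q(\mathcal{U}))]^{-1}\circ D_{\mathcal{U}}\mathcal{F}(\mathcal{U},Q(\mathcal{U}))$, where after possibly shrinking $\mathcal{O}(\bar{\mathcal{U}})$ the pair $(\mathbf{y}_{\mathcal{U}},p_{\mathcal{U}})$ remains regular (regularity is an open condition, since $T_{\rho}$ being an isomorphism persists under small perturbations of $\boldsymbol\Phi$ in $\mathbf{V}(\rho,\Omega)$, as the operator norm of the perturbation of $T_{\rho}$ is controlled by $\|\nabla(\mathbf{y}_{\mathcal{U}}-\bar{\mathbf{y}})\|_{\mathbf{L}^2(\rho,\Omega)}$ via the same weighted estimates); hence $D_{(\boldsymbol\Phi,\zeta)}\mathcal{F}(\mathcal{U},Q(\mathcal{U}))$ is an isomorphism. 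Since $D_{\mathcal{U}}\mathcal{F}$ is itself an isomorphism onto its image inside $\mathbf{H}_{0}^{1}(\rho^{-1},\Omega)'$ (it sends $\mathcal{V}$ to $-\sum_t\mathbf{v}_t\delta_t$, which is injective in $\mathcal{V}$ and has a bounded inverse on its range by the norm equivalence on the finite-dimensional space $[\mathbb{R}^2]^\ell$), the composition $Q'(\mathcal{U})$ is an isomorphism onto its image $\mathbf{V}(\rho,\Omega)\times L^2(\rho,\Omega)/\mathbb{R}$.

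For (iii) I would differentiate the identity $\mathcal{F}(\mathcal{U},Q(\mathcal{U}))=0$ in the direction $\mathcal{V}$: writing $(\boldsymbol\theta,\xi)=Q'(\mathcal{U})\mathcal{V}$ and using the chain rule, $D_{(\boldsymbol\Phi,\zeta)}\mathcal{F}(\mathcal{U},Q(\mathcal{U}))(\boldsymbol\theta,\xi) + D_{\mathcal{U}}\mathcal{F}(\mathcal{U},Q(\mathcal{U}))\mathcal{V} = 0$, which is precisely the weak formulation \eqref{eq:lin_NS_weighted} with $\boldsymbol\Phi=\mathbf{y}_{\mathcal{U}}$ and forcing $\mathbf{g}=\sum_{t\in\mathcal{D}}\mathbf{v}_t\delta_t$; uniqueness of the solution is Theorem \ref{thm:wellposed_lin_NS_weighted}. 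For (iv) I would differentiate once more, now the identity $D_{(\boldsymbol\Phi,\zeta)}\mathcal{F}(\mathcal{U},Q(\mathcal{U}))Q'(\mathcal{U})\mathcal{V}_1 + D_{\mathcal{U}}\mathcal{F}(\mathcal{U},Q(\mathcal{U}))\mathcal{V}_1 = 0$, in the direction $\mathcal{V}_2$. Because $\mathcal{F}$ is quadratic the only surviving second-order contribution comes from the bilinear convective term, producing exactly the right-hand side $\int_\Omega(\boldsymbol\theta_{\mathcal{V}_1}\otimes\boldsymbol\theta_{\mathcal{V}_2}+\boldsymbol\theta_{\mathcal{V}_2}\otimes\boldsymbol\theta_{\mathcal{V}_1}):\nabla\mathbf{v}$ in \eqref{eq:second_der_sol_map_S}, while the linear part is again $T_{\rho}$ applied to $(\boldsymbol\psi,\gamma):=Q''(\mathcal{U})\mathcal{V}_1\mathcal{V}_2$; well-posedness of this system follows once more from Theorem \ref{thm:wellposed_lin_NS_weighted}, noting that the right-hand side defines an element of $\mathbf{H}_{0}^{1}(\rho^{-1},\Omega)'$ by the weighted estimate $\|\boldsymbol\theta_{\mathcal{V}_1}\otimes\boldsymbol\theta_{\mathcal{V}_2}:\nabla\mathbf{v}\|_{L^1}\leq\|\boldsymbol\theta_{\mathcal{V}_1}\|_{\mathbf{L}^4(\rho,\Omega)}\|\boldsymbol\theta_{\mathcal{V}_2}\|_{\mathbf{L}^4(\rho,\Omega)}\|\nabla\mathbf{v}\|_{\mathbf{L}^2(\rho^{-1},\Omega)}$ together with $\mathbf{H}_0^1(\rho,\Omega)\hookrightarrow\mathbf{L}^4(\rho,\Omega)$. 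The main obstacle I anticipate is the careful verification that $\mathcal{F}$ genuinely maps into $\mathbf{H}_{0}^{1}(\rho^{-1},\Omega)'$ and is $C^2$ there — in particular handling the convective term and the Dirac forcing simultaneously within the weighted duality — and the bookkeeping needed to show that regularity of $\mathbf{y}_{\mathcal{U}}$ (and hence the isomorphism property of $T_{\rho}$) is preserved on a possibly smaller neighborhood $\mathcal{O}(\bar{\mathcal{U}})$, which is what legitimizes simultaneously assertions (i)–(iv).
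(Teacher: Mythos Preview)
Your proposal is correct and takes essentially the same approach as the paper: the paper's proof is a one-line deferral to \cite[Theorem 2.10 and Corollary 2.11]{MR3936891} (an implicit function theorem argument) adapted via Theorem \ref{thm:wellposed_lin_NS_weighted}, and you have spelled out precisely those details in the weighted setting. One small remark: in your justification of (ii) you write that $Q'(\mathcal{U})$ is ``an isomorphism onto its image $\mathbf{V}(\rho,\Omega)\times L^2(\rho,\Omega)/\mathbb{R}$''---the image is of course a finite-dimensional subspace, not the whole space (the statement of (ii) in the paper is itself imprecise in this regard), but your argument that $Q'(\mathcal{U})$ is injective with bounded inverse on its range is exactly what is needed and used downstream.
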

\begin{proof}
\EO{The proof follows from slight modifications of the arguments elaborated in the proof of \cite[Theorem 2.10 and Corollary 2.11]{MR3936891} upon utilizing the results of Theorem \ref{thm:wellposed_lin_NS_weighted}. For brevity, we skip the details.
\qed}
\end{proof}

\EO{We conclude this section with the following Lipschitz property for $Q$, which will be of importance to study second order conditions in Section \ref{subsec:second_order_optimality_conditions}.
}

\begin{lemma}[Lipschitz Property]\label{lemma:lips_prop}
\EO{In the framework of Theorem \ref{thm:diff_prop}, we have the following Lipschitz property for the map $Q$:}
\begin{equation}
\EO{
\| \nabla( \mathbf{y} - \bar{\mathbf{y}})\|_{\mathbf{L}^{2}(\rho,\Omega)} + \|p - \bar{p} \|_{L^{2}(\rho,\Omega)}
\lesssim
\| \mathcal{U} - \bar{\mathcal{U}} \|_{[\mathbb{R}^{2}]^{\ell}}
\quad \forall \mathcal{U} \in \mathcal{O}(\bar{\mathcal{U}}),
}
\label{eq:LIpschitz_property}
\end{equation}
\EO{with a hidden constant that depends on $Q'$ and $\mathcal{O}(\bar{\mathcal{U}})$.}
\end{lemma}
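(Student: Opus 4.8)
The plan is to exploit the $C^{2}$---in particular $C^{1}$---regularity of the solution map $Q$ established in Theorem~\ref{thm:diff_prop}, combined with a mean value argument along segments in the control space. Since $[\mathbb{R}^{2}]^{\ell}$ is finite dimensional, I would first replace, if necessary, the neighborhood $\mathcal{O}(\bar{\mathcal{U}})$ by an open ball $B \subset \mathcal{O}(\bar{\mathcal{U}})$ centered at $\bar{\mathcal{U}}$ whose closure $\overline{B}$ is still contained in $\mathcal{O}(\bar{\mathcal{U}})$; such a ball is convex and $\overline{B}$ is compact. Because $Q$ is of class $C^{2}$, the Fr\'echet derivative $Q' : \mathcal{O}(\bar{\mathcal{U}}) \to \mathcal{L}( [\mathbb{R}^{2}]^{\ell}, \mathbf{V}(\rho,\Omega) \times L^{2}(\rho,\Omega)/\mathbb{R})$ is continuous, so the quantity
\[
M := \sup_{\mathcal{W} \in \overline{B}} \| Q'(\mathcal{W}) \|_{\mathcal{L}( [\mathbb{R}^{2}]^{\ell}, \mathbf{V}(\rho,\Omega) \times L^{2}(\rho,\Omega)/\mathbb{R})} < \infty.
\]

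Next, given $\mathcal{U} \in B$, I would note that the segment $\{ \bar{\mathcal{U}} + s(\mathcal{U} - \bar{\mathcal{U}}) : s \in [0,1] \}$ lies in $B$ by convexity, and invoke the fundamental theorem of calculus for $C^{1}$ maps between Banach spaces:
\[
Q(\mathcal{U}) - Q(\bar{\mathcal{U}}) = \int_{0}^{1} Q'\bigl( \bar{\mathcal{U}} + s(\mathcal{U} - \bar{\mathcal{U}}) \bigr)(\mathcal{U} - \bar{\mathcal{U}})\, \mathrm{d}s .
\]
Taking norms in $\mathbf{V}(\rho,\Omega) \times L^{2}(\rho,\Omega)/\mathbb{R}$ and using the definition of $M$ then yields
\[
\| Q(\mathcal{U}) - Q(\bar{\mathcal{U}}) \|_{\mathbf{V}(\rho,\Omega) \times L^{2}(\rho,\Omega)/\mathbb{R}} \leq M \, \| \mathcal{U} - \bar{\mathcal{U}} \|_{[\mathbb{R}^{2}]^{\ell}} .
\]
Finally, recalling from Theorem~\ref{thm:diff_prop}(i) that $Q(\mathcal{U}) = (\mathbf{y},p)$ and $Q(\bar{\mathcal{U}}) = (\bar{\mathbf{y}},\bar{p})$, and that the norm of the product space $\mathbf{V}(\rho,\Omega) \times L^{2}(\rho,\Omega)/\mathbb{R}$ is (equivalent to) $\| \nabla \cdot \|_{\mathbf{L}^{2}(\rho,\Omega)}$ on the velocity component plus $\| \cdot \|_{L^{2}(\rho,\Omega)}$ on the pressure component, I would obtain \eqref{eq:LIpschitz_property} with hidden constant $M$, which depends on $Q'$ and on the radius of $B$, hence on $\mathcal{O}(\bar{\mathcal{U}})$.

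I do not anticipate a genuine obstacle here: the statement is a soft consequence of differentiability. The only points requiring a little care are, first, arranging convexity of the neighborhood, which is immediate because one may always shrink $\mathcal{O}(\bar{\mathcal{U}})$ to a ball without affecting the conclusions of Theorem~\ref{thm:diff_prop}, and second, observing that the local boundedness of $Q'$ on a compact subset of $\mathcal{O}(\bar{\mathcal{U}})$ comes for free from the $C^{1}$ regularity of $Q$ together with the finite dimensionality of $[\mathbb{R}^{2}]^{\ell}$. If one prefers to avoid integrating a Banach-space-valued map, the same conclusion follows from the classical mean value inequality $\| Q(\mathcal{U}) - Q(\bar{\mathcal{U}}) \| \le \bigl( \sup_{s \in [0,1]} \| Q'(\bar{\mathcal{U}} + s(\mathcal{U}-\bar{\mathcal{U}})) \| \bigr) \| \mathcal{U} - \bar{\mathcal{U}} \|$ valid for $C^{1}$ maps on convex sets.
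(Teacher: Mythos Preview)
Your proposal is correct and follows essentially the same approach as the paper: shrink $\mathcal{O}(\bar{\mathcal{U}})$ to a convex neighborhood on which $\|Q'(\cdot)\|$ is uniformly bounded, then apply the mean value inequality for $C^{1}$ maps between Banach spaces. The paper invokes the mean value theorem directly rather than the integral representation, but you already note this as an equivalent alternative.
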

\begin{proof}
\EO{In view of the results of Theorem \ref{thm:diff_prop}, we can choose an open, bounded, and convex neighborhood $\mathcal{O}(\bar{\mathcal{U}})$ of $\bar{\mathcal{U}}$ such that $Q'(\mathcal{U}):[\mathbb{R}^{2}]^{\ell} \rightarrow \mathbf{V}(\rho,\Omega) \times L^{2}(\rho,\Omega)/\mathbb{R}$ is an isomorphism and $\|Q'(\mathcal{U})\| \leq \mathcal{M}_{Q}$ for every $\mathcal{U} \in \mathcal{O}(\bar{\mathcal{U}})$. Here, $\mathcal{M}_{Q} > 0$ and $\| \cdot \|$ denotes the norm in the space of linear and continuous operators from $[\mathbb{R}^{2}]^{\ell}$ into $\mathbf{V}(\rho,\Omega) \times L^{2}(\rho,\Omega)/\mathbb{R}$. Thus, as a consequence of the mean value theorem for operators \cite[Proposition 5.3.11]{MR2153422}, we have}
\[ 
\EO{\| Q(\mathcal{U}) - Q(\bar{\mathcal{U}}) \|_{\mathbf{H}_{0}^{1}(\rho,\Omega) \times L^{2}(\rho,\Omega)/\mathbb{R}}
\leq
\sup_{\mathfrak{t} \in [0,1]}\|Q'((1-\mathfrak{t})\mathcal{U} + \mathfrak{t}\bar{\mathcal{U}})\|\| \mathcal{U} - \bar{\mathcal{U}} \|_{[\mathbb{R}^{2}]^{\ell}}} 
\]
\end{proof}
\EO{for every $\mathcal{U} \in \mathcal{O}(\bar{\mathcal{U}})$. Invoke the fact that $\|Q'(\mathcal{U})\| \leq \mathcal{M}_{Q}$, for every $\mathcal{U} \in \mathcal{O}(\bar{\mathcal{U}})$, to immediately arrive at the desired bound.
%The proof concludes by estimating the supremum with the bound $M_{Q}$.
\qed}

%%%%%%%%%%%%%%%%%%%%%%%%%%%%%%%%%%%%%%%%%%%%%%%%%%%%%%%%%%%%
%%%%%%%%%%%%%%%%%%%%%%%%%%%%%%%%%%%%%%%%%%%%%%%%%%%%%%%%%%%%
%%%%%%%%%%%%%%%%%%%%%%%%%%%%%%%%%%%%%%%%%%%%%%%%%%%%%%%%%%%%
%%%%%%%%%%%%%%%%%%%%%%%%%%%%%%%%%%%%%%%%%%%%%%%%%%%%%%%%%%%%
%%%%%%%%%%%%%%%%%%%%%%%%%%%%%%%%%%%%%%%%%%%%%%%%%%%%%%%%%%%%
%%%%%%%%%%%%%%%%%%%%%%%%%%%%%%%%%%%%%%%%%%%%%%%%%%%%%%%%%%%%
%%%%%%%%%%%%%%%%%%%%%%%%%%%%%%%%%%%%%%%%%%%%%%%%%%%%%%%%%%%%
%%%%%%%%%%%%%%%%%%%%%%%%%%%%%%%%%%%%%%%%%%%%%%%%%%%%%%%%%%%%
\section{The Optimal Control Problem}\label{sec:ocp}
In this section, we propose and analyze the following weak formulation for the optimal control problem \eqref{def:cost_functional}--\eqref{def:box_constraints}: Find
\begin{equation}\label{eq:weak_cost}
\min \{J(\mathbf{y},\mathcal{U}): \EO{(\mathbf{y},\mathcal{U})\in \mathbf{H}_{0}^{1}(\rho,\Omega)}\times\mathbb{U}_{ad}\},
\end{equation}
subject to the weak formulation of the stationary Navier--Stokes equations
\begin{equation}
\label{eq:weak_st_eq}
\int_{\Omega} \left(\nu \nabla \mathbf{y} : \nabla \mathbf{v} - \mathbf{y}\otimes\mathbf{y}: \nabla \mathbf{v} - p\text{div }\mathbf{v}\right) = \sum_{t\in\mathcal{D}}\langle \mathbf{u}_t\delta_{t},\mathbf{v}\rangle, \quad \int_{\Omega} q\text{div }\mathbf{y} = 0,
\end{equation}
for all $(\mathbf{v},q)\in \mathbf{H}_{0}^{1}(\rho^{-1},\Omega) \times L^{2}(\rho^{-1},\Omega)/\mathbb{R}$. The weight $\rho$ is defined as in \eqref{def:weight_rho}, \EO{where the parameter $\alpha$ belongs to $(0,2)$.} We comment that, since the velocity component $\mathbf{y}$ of a solution to the state equation is sought in $\mathbf{H}_0^1(\rho,\Omega)$, an application of Theorem \ref{thm:embedding_result} guarantees that $\mathbf{y} \in \mathbf{L}^2(\Omega)$. Consequently, all the terms involved in the definition of the cost functional $J$ are well defined.

%%%%%%%%%%%%%%%%%%%%%%%%%%%%%%%%%%%%%%%%%%%%%%%%%%%%%%%%%%%%
%%%%%%%%%%%%%%%%%%%%%%%%%%%%%%%%%%%%%%%%%%%%%%%%%%%%%%%%%%%%
%%%%%%%%%%%%%%%%%%%%%%%%%%%%%%%%%%%%%%%%%%%%%%%%%%%%%%%%%%%%
%%%%%%%%%%%%%%%%%%%%%%%%%%%%%%%%%%%%%%%%%%%%%%%%%%%%%%%%%%%%
\subsection{Existence of Optimal Solutions} 
The existence of an optimal solution \EO{$(\bar{\mathbf{y}},\bar{\mathcal{U}})$} is as follows.
\begin{theorem}[Existence]
The control problem \eqref{eq:weak_cost}--\eqref{eq:weak_st_eq} admits at least one global solution \EO{$(\bar{\mathbf{y}},\bar{\mathcal{U}})\in \mathbf{H}_{0}^{1}(\rho,\Omega)\times\mathbb{U}_{ad}$.}
\label{thm:existence_optimal_solution}
\end{theorem}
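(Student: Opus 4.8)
The plan is to use the direct method of the calculus of variations. First I would let $\{(\mathbf{y}_n,\mathcal{U}_n)\}_{n\in\mathbb{N}} \subset \mathbf{H}_0^1(\rho,\Omega)\times\mathbb{U}_{ad}$ be a minimizing sequence for \eqref{eq:weak_cost}--\eqref{eq:weak_st_eq}; such a sequence exists provided the admissible set is nonempty, which follows because for any fixed $\mathcal{U}\in\mathbb{U}_{ad}$ the forcing $\sum_{t\in\mathcal{D}}\mathbf{u}_t\delta_t$ belongs to $\mathbf{H}_0^1(\rho^{-1},\Omega)'$ (see Section \ref{sec:particular_weight}) and hence the existence result \eqref{eq:weak_NS_delta}--\eqref{eq:NS_weighted_stab} of Section \ref{sec:NS_singular_rhs}, applied with $\omega=\rho\in A_2(\Omega)$, furnishes at least one associated state $\mathbf{y}\in\mathbf{H}_0^1(\rho,\Omega)$. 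Since the cost is bounded below by zero, the infimum $\mathfrak{j}:=\inf J$ is finite.

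Next I would extract compactness. The set $\mathbb{U}_{ad}\subset[\mathbb{R}^2]^\ell$ is nonempty, closed, convex and bounded (because $\mathbf{a}_t<\mathbf{b}_t$), hence compact; so along a subsequence (not relabeled) $\mathcal{U}_n\to\bar{\mathcal{U}}\in\mathbb{U}_{ad}$. The regularization term in $J$ together with $J(\mathbf{y}_n,\mathcal{U}_n)\to\mathfrak{j}$ bounds $\|\mathbf{y}_n-\mathbf{y}_\Omega\|_{\mathbf{L}^2(\Omega)}$ uniformly; more importantly, the stability bound \eqref{eq:NS_weighted_stab} yields $\|\nabla\mathbf{y}_n\|_{\mathbf{L}^2(\rho,\Omega)}\lesssim\|\sum_{t\in\mathcal{D}}\mathbf{u}_{n,t}\delta_t\|_{\mathbf{H}_0^1(\rho^{-1},\Omega)'}$, and the right-hand side is uniformly bounded because $\{\mathcal{U}_n\}$ is bounded in $[\mathbb{R}^2]^\ell$. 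Hence $\{\mathbf{y}_n\}$ is bounded in the reflexive space $\mathbf{H}_0^1(\rho,\Omega)$, so up to a further subsequence $\mathbf{y}_n\rightharpoonup\bar{\mathbf{y}}$ in $\mathbf{H}_0^1(\rho,\Omega)$. By the weighted compact Sobolev embedding $\mathbf{H}_0^1(\rho,\Omega)\hookrightarrow\hookrightarrow\mathbf{L}^4(\rho,\Omega)$ (\cite[Theorem 4.12]{MR2797702}; see also \cite[Proposition 2]{MR3998864}) and the continuous embedding $\mathbf{H}_0^1(\rho,\Omega)\hookrightarrow\mathbf{L}^2(\Omega)$ of Theorem \ref{thm:embedding_result}, we may also assume $\mathbf{y}_n\to\bar{\mathbf{y}}$ strongly in $\mathbf{L}^4(\rho,\Omega)$ and $\mathbf{y}_n\rightharpoonup\bar{\mathbf{y}}$ weakly in $\mathbf{L}^2(\Omega)$.

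Then I would pass to the limit in the state equation \eqref{eq:weak_st_eq}, tested against a fixed $(\mathbf{v},q)\in\mathbf{H}_0^1(\rho^{-1},\Omega)\times L^2(\rho^{-1},\Omega)/\mathbb{R}$. The viscous term $\int_\Omega\nu\nabla\mathbf{y}_n:\nabla\mathbf{v}$ and the divergence terms $\int_\Omega p_n\,\mathrm{div}\,\mathbf{v}$ and $\int_\Omega q\,\mathrm{div}\,\mathbf{y}_n$ pass to the limit by weak convergence (here one also needs a uniform bound on $\{p_n\}$ in $L^2(\rho,\Omega)/\mathbb{R}$, again supplied by \eqref{eq:NS_weighted_stab}, so that $p_n\rightharpoonup\bar p$ along a subsequence; the pairing $L^2(\rho,\Omega)\times L^2(\rho^{-1},\Omega)$ is well posed). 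The delicate term is the convective one: writing
\[
\int_\Omega \mathbf{y}_n\otimes\mathbf{y}_n:\nabla\mathbf{v} - \int_\Omega \bar{\mathbf{y}}\otimes\bar{\mathbf{y}}:\nabla\mathbf{v}
= \int_\Omega (\mathbf{y}_n-\bar{\mathbf{y}})\otimes\mathbf{y}_n:\nabla\mathbf{v} + \int_\Omega \bar{\mathbf{y}}\otimes(\mathbf{y}_n-\bar{\mathbf{y}}):\nabla\mathbf{v},
\]
each term is bounded by $\|\mathbf{y}_n-\bar{\mathbf{y}}\|_{\mathbf{L}^4(\rho,\Omega)}(\|\mathbf{y}_n\|_{\mathbf{L}^4(\rho,\Omega)}+\|\bar{\mathbf{y}}\|_{\mathbf{L}^4(\rho,\Omega)})\|\nabla\mathbf{v}\|_{\mathbf{L}^2(\rho^{-1},\Omega)}\to0$ by the strong $\mathbf{L}^4(\rho,\Omega)$ convergence and the uniform $\mathbf{L}^4(\rho,\Omega)$ bound. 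Thus $(\bar{\mathbf{y}},\bar{\mathcal{U}})$ is admissible. Finally, $J$ is weakly lower semicontinuous: $\mathbf{y}\mapsto\frac12\|\mathbf{y}-\mathbf{y}_\Omega\|_{\mathbf{L}^2(\Omega)}^2$ is convex and continuous on $\mathbf{L}^2(\Omega)$ hence weakly l.s.c., and $\mathcal{U}\mapsto\frac{\eta}{2}\sum_{t\in\mathcal{D}}|\mathbf{u}_t|^2$ is continuous on $[\mathbb{R}^2]^\ell$, so $J(\bar{\mathbf{y}},\bar{\mathcal{U}})\le\liminf_n J(\mathbf{y}_n,\mathcal{U}_n)=\mathfrak{j}$, which forces equality and shows $(\bar{\mathbf{y}},\bar{\mathcal{U}})$ is a global minimizer. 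The main obstacle is the convective term: one must be careful that the natural compactness is in the weighted space $\mathbf{L}^4(\rho,\Omega)$ (not an unweighted one), which is exactly why the weighted compact embedding of \cite{MR2797702} is invoked, and one must verify that the test-function space $\mathbf{H}_0^1(\rho^{-1},\Omega)$ pairs correctly with the weighted estimates.
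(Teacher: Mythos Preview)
Your argument is correct and follows essentially the same route as the paper's proof: a minimizing sequence, compactness of $\mathbb{U}_{ad}$, the stability bound \eqref{eq:NS_weighted_stab} to get uniform bounds on $(\mathbf{y}_n,p_n)$ in $\mathbf{H}_0^1(\rho,\Omega)\times L^2(\rho,\Omega)/\mathbb{R}$, and the compact embedding $\mathbf{H}_0^1(\rho,\Omega)\hookrightarrow\mathbf{L}^4(\rho,\Omega)$ to handle the convective term. The only cosmetic difference is in the last step: the paper upgrades the $\mathbf{L}^4(\rho,\Omega)$ strong convergence to strong $\mathbf{L}^2(\Omega)$ convergence via H\"older (so that $J(\mathbf{y}_k,\mathcal{U}_k)\to J(\bar{\mathbf{y}},\bar{\mathcal{U}})$ with equality), whereas you invoke weak lower semicontinuity of $J$ in $\mathbf{L}^2(\Omega)$; both conclude optimality.
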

\begin{proof}
Let 
%$\{(\mathbf{y}_{k},p_k, \mathcal{U}_{k})\}_{k\in\mathbb{N}}$ 
\EO{$\{(\mathbf{y}_{k}, \mathcal{U}_{k})\}_{k\in\mathbb{N}}$} be a minimizing sequence, i.e., for $k\in \mathbb{N}$, the pair $(\mathbf{y}_{k},  p_k) \in \mathbf{H}_{0}^{1}(\rho,\Omega) \times L^{2}(\rho,\Omega)/\mathbb{R}$
%$(\mathbf{y}_{k},  p_k) \in \mathbf{H}_0^1(\rho,\Omega)\times L^2(\rho,\Omega) / \mathbb{R}$ 
solves
\begin{equation*}
\label{eq:weak_st_eq_k}
\int_{\Omega} \left(\nu \nabla \mathbf{y}_k : \nabla \mathbf{v} - \mathbf{y}_k\otimes\mathbf{y}_k: \nabla \mathbf{v} - p_k\text{div }\mathbf{v}\right) = \sum_{t\in\mathcal{D}}\langle \mathbf{u}_t^k \delta_{t},\mathbf{v}\rangle, \int_{\Omega} q\text{div }\mathbf{y}_k = 0,
\end{equation*}
for all $(\mathbf{v},q)\in\mathbf{H}_{0}^{1}(\rho^{-1},\Omega) \times L^{2}(\rho^{-1},\Omega)/\mathbb{R}$, and 
%the pair $(\mathbf{y}_{k},  p_k)$ together with $\mathcal{U}_k$ are 
\EO{the pair $(\mathbf{y}_{k},\mathcal{U}_{k})$ is} such that
\EO{$J(\mathbf{y}_{k}, \mathcal{U}_{k})\rightarrow \mathfrak{i}:=\inf\{J(\mathbf{y}, \mathcal{U}) : (\mathbf{y}, \mathcal{U})\in \mathbf{H}_{0}^{1}(\rho,\Omega) \times\mathbb{U}_{ad}\}$ as $k\uparrow \infty$.}
%$J(\mathbf{y}_{k}, p_k, \mathcal{U}_{k})\rightarrow \mathfrak{i}:=\inf\{J(\mathbf{y}, p, \mathcal{U}) :(\mathbf{y}, p, \mathcal{U})\in \mathbf{H}_0^1(\rho,\Omega)\times L^2(\rho,\Omega) / \mathbb{R} \times\mathbb{U}_{ad}\}$ as $k\uparrow \infty$. 
Here, for $k \in \mathbb{N}$, we denote $\mathcal{U}_k:= \{  \mathbf{u}_t^k \}_{t \in \mathcal{D}}$. We notice that the existence of solutions for the previously stated problem follows from the results of Section \ref{sec:NS_singular_rhs}.
%\FL{subsection \ref{sec:NS_singular_rhs}.}

Since $\mathbb{U}_{ad}$ is compact, we immediately conclude the existence of a nonrelabeled subsequence $\{\mathcal{U}_{k}\}_{k\in\mathbb{N}}$ such that $\mathcal{U}_{k}\rightarrow \bar{\mathcal{U}}$ in $[\mathbb{R}^{2}]^{\ell}$ with $\bar{\mathcal{U}}\in\mathbb{U}_{ad}$. On the other hand, in view of the stability bound \eqref{eq:NS_weighted_stab}, we conclude that $\{(\mathbf{y}_{k},p_k)\}_{k\in\mathbb{N}}$ is uniformly bounded in $\mathbf{H}_0^1(\rho,\Omega)\times L^2(\rho,\Omega) / \mathbb{R}$.
Consequently, we deduce the existence of a nonrelabeled subsequence $\{(\mathbf{y}_{k},p_k)\}_{k\in\mathbb{N}}$ such that $(\mathbf{y}_{k},p_k)\rightharpoonup (\bar{\mathbf{y}},\bar{p})$ in $\mathbf{H}_0^1(\rho,\Omega)\times L^2(\rho,\Omega)/\mathbb{R}$
as $k\uparrow \infty$; $(\bar{\mathbf{y}},\bar{p})$ being the natural candidate for an optimal state. The rest of the proof is dedicated to prove that $(\bar{\mathbf{y}},\bar{p})$ solves \eqref{eq:weak_st_eq} with $\mathbf{u}_t$ being replaced by $\bar{\mathbf{u}}_t$ for $t\in \mathcal{D}$.

With the weak convergence $(\mathbf{y}_{k},p_k)\rightharpoonup (\bar{\mathbf{y}},\bar{p})$ in $\mathbf{H}_0^1(\rho,\Omega)\times L^2(\rho,\Omega)/\mathbb{R}$
as $k\uparrow \infty$, at hand, we obtain
\[
\int_{\Omega} \nu \nabla (\mathbf{y}_{k} - \bar{\mathbf{y}}) : \nabla \mathbf{v} \rightarrow \!0, 
~
\int_{\Omega} (p_{k} - \bar{p})\text{div }\mathbf{v} \rightarrow 0, ~ \int_{\Omega}  q\text{div }(\mathbf{y}_{k} - \bar{\mathbf{y}}) \rightarrow 0,
\]
%\FL{\begin{equation*}
%\int_{\Omega} \nu \nabla \mathbf{y}_{k} : \nabla \mathbf{v} \rightarrow \!\int_{\Omega} \nu \nabla \bar{\mathbf{y}} : \nabla \mathbf{v}, 
%\quad 
%\int_{\Omega} p_{k}\text{div }\mathbf{v} \rightarrow \!\int_{\Omega} \bar{p}\text{div }\mathbf{v}, 
%\end{equation*}
%\quad
%\begin{equation*}
%\int_{\Omega}  q\text{div }\mathbf{y}_{k} \rightarrow \!\int_{\Omega} q\text{div }\bar{\mathbf{y}},
%\end{equation*}}
as $k\uparrow \infty$, for every $\mathbf{v} \in \mathbf{H}_0^1(\rho^{-1},\Omega)$ and $q \in L^2(\rho^{-1},\Omega)/\mathbb{R}$. On the other hand, the convergence $\mathcal{U}_{k}\rightarrow \bar{\mathcal{U}}$ in $[\mathbb{R}^{2}]^{\ell}$ yields $\sum_{t\in\mathcal{D}}\langle \mathbf{u}_t^{k}\delta_{t},\mathbf{v}\rangle \rightarrow \sum_{t\in\mathcal{D}}\langle \bar{\mathbf{u}}_t\delta_{t},\mathbf{v}\rangle$ as $k\uparrow \infty$. It thus suffices to analyze the convective term. To accomplish this task, we invoke H\"older's inequality to arrive at
\begin{multline*}
\left|\int_{\Omega} \left(\mathbf{y}_{k}\otimes\mathbf{y}_{k} - \bar{\mathbf{y}}\otimes\bar{\mathbf{y}} \right) : \nabla \mathbf{v} \right| \\
\leq 
\left(\|\mathbf{y}_{k}\|_{\mathbf{L}^4(\rho,\Omega)} + \|\bar{\mathbf{y}}\|_{\mathbf{L}^4(\rho,\Omega)}\right) \|\bar{\mathbf{y}} - \mathbf{y}_{k}\|_{\mathbf{L}^4(\rho,\Omega)}\|\nabla \mathbf{v}\|_{\mathbf{L}^2(\rho^{-1},\Omega)}.
\end{multline*}
The compact embedding $\mathbf{H}_0^1(\rho,\Omega) \hookrightarrow \mathbf{L}^4(\rho,\Omega)$, which follows from \cite[Theorem 4.12]{MR2797702} (see also \cite[Proposition 2]{MR3998864}), combined with $\mathbf{y}_{k}\rightharpoonup \bar{\mathbf{y}}$ in $\mathbf{H}_0^1(\rho,\Omega)$, as $k \uparrow \infty$, allow us to conclude that $(\bar{\mathbf{y}},\bar{p})$ solves \eqref{eq:weak_st_eq} with $\mathbf{u}_t$ being replaced by $\bar{\mathbf{u}}_t$ for $t\in \mathcal{D}$; $\bar{\mathcal{U}} = \{ \bar{\mathbf{u}}_t\}_{t \in \mathcal{D}}$.

To conclude the proof, we must prove the optimality of $\bar{\mathcal{U}}$. Observe that $\mathcal{U}_{k}\rightarrow \bar{\mathcal{U}}$ in $[\mathbb{R}^{2}]^{\ell}$, as $k \uparrow \infty$, and that $\mathbf{y}_{k}\rightarrow \bar{\mathbf{y}}$ in $\mathbf{L}^2(\Omega)$, as $k\uparrow \infty$. The latter follows from
\begin{equation*}
\|\bar{\mathbf{y}} - \mathbf{y}_{k}\|_{\mathbf{L}^2(\Omega)} 
\leq 
\|\bar{\mathbf{y}} - \mathbf{y}_{k}\|_{\mathbf{L}^4(\rho,\Omega)} \left(\int_\Omega\rho^{-1} \right)^{\frac{1}{4}}
\lesssim
\|\bar{\mathbf{y}} - \mathbf{y}_{k}\|_{\mathbf{L}^4(\rho,\Omega)} \to 0,
\quad
k \uparrow \infty.
\end{equation*}
With these convergence properties at hand, 
%\DQ{together with the fact that $J$ is continous on $\mathcal{X} \times \mathbb{U}_{ad}$,} 
we thus conclude the optimality of $\bar{\mathcal{U}}$: $J(\bar{\mathbf{y}},\bar{\mathcal{U}}) = \lim_{k\to\infty}J(\mathbf{y}_{k},\mathcal{U}_{k}) = \mathfrak{i}$. \qed
\end{proof}
%%%%%%%%%%%%%%%%%%%%%%%%%%%%%%%%%%%%%%%%%%%%%%%%%%%%%%%%%%%%
%%%%%%%%%%%%%%%%%%%%%%%%%%%%%%%%%%%%%%%%%%%%%%%%%%%%%%%%%%%%
%%%%%%%%%%%%%%%%%%%%%%%%%%%%%%%%%%%%%%%%%%%%%%%%%%%%%%%%%%%%
%%%%%%%%%%%%%%%%%%%%%%%%%%%%%%%%%%%%%%%%%%%%%%%%%%%%%%%%%%%%
\section{First and Second Order Optimality Conditions} 
\label{sec:op_conditions}

In this section, we analyze first and second order optimality conditions for the optimal control problem \eqref{eq:weak_cost}--\eqref{eq:weak_st_eq}. \EO{We must immediately mention that, since \eqref{eq:weak_cost}--\eqref{eq:weak_st_eq} is not convex, we distinguish between local and global solutions and present optimality conditions in the context of local solutions \cite{MR3936891,MR2338434}.}

\begin{definition}[Local Solutions]
\EO{We say that $(\bar{\mathbf{y}},\bar{\mathcal{U}})$ is a local solution for problem \eqref{eq:weak_cost}--\eqref{eq:weak_st_eq} if there exist neighborhoods $\mathcal{A} \subset \mathbf{H}_{0}^{1}(\rho,\Omega)$ and $\mathcal{B}\subset [\mathbb{R}^{2}]^{\ell} \cap \mathbb{U}_{ad}$ of $\bar{\mathbf{y}}$ and $\bar{\mathcal{U}}$, respectively, such that $J(\bar{\mathbf{y}},\bar{\mathcal{U}}) \leq J(\mathbf{y},\mathcal{U})$ for all $(\mathbf{y},\mathcal{U}) \in \mathcal{A} \times \mathcal{B}$. If the inequality is strict for every $(\mathbf{y},\mathcal{U}) \in \mathcal{A} \times \mathcal{B} \setminus \{  (\bar{\mathbf{y}},\bar{\mathcal{U}})\}$, we say that $(\bar{\mathbf{y}},\bar{\mathcal{U}})$ is a strict local solution.}
\end{definition}

\EO{From now on, we will assume that $(\bar{\mathbf{y}},\bar{\mathcal{U}})$ is a local solution to \eqref{eq:weak_cost}--\eqref{eq:weak_st_eq} such that $\bar{\mathbf{y}}$ is regular. Within this setting, the results of Theorem \ref{thm:diff_prop} guarantee the existence of neighborhoods $\mathcal{O}(\bar{\mathcal{U}}) \subset [\mathbb{R}^{2}]^{\ell} $, $\mathcal{O}(\bar{\mathbf{y}}) \subset \mathbf{V}(\rho,\Omega) $, and $\mathcal{O}(\bar{p}) \subset L^{2}(\rho,\Omega)/\mathbb{R}$ of $ \bar{\mathcal{U}}$, $\bar{\mathbf{y}}$, and $\bar{p}$, respectively, and a map of class $C^2$,
\begin{equation*}
Q: \mathcal{O}(\bar{\mathcal{U}}) \to \mathcal{O}(\bar{\mathbf{y}}) \times \mathcal{O}(\bar{p}),
\end{equation*}
such that $(\bar{\mathbf{y}},\bar{p}) = Q(\bar{\mathcal{U}})$. In addition, for each $\mathcal{U} \in \mathcal{O}(\bar{\mathcal{U}})$, the pair $(\mathbf{y}_{\mathcal{U}},p_{\mathcal{U}}) := Q(\mathcal{U})$ corresponds to the unique solution of \eqref{def:state_eq} in $\mathcal{O}(\bar{\mathbf{y}}) \times \mathcal{O}(\bar{p}).$}
%%%%%%%%%%%%%%%%%%%%%%%%%%%%%%%%%%%%%%%%%%%%%%%%%%%%%%%%%%%%%%%%%
%%%%%%%%%%%%%%%%%%%%%%%%%%%%%%%%%%%%%%%%%%%%%%%%%%%%%%%%%%%%%%%%%
%%%%%%%%%%%%%%%%%%%%%%%%%%%%%%%%%%%%%%%%%%%%%%%%%%%%%%%%%%%%%%%%%
%%%%%%%%%%%%%%%%%%%%%%%%%%%%%%%%%%%%%%%%%%%%%%%%%%%%%%%%%%%%%%%%%
\subsection{Adjoint Equation}
\label{subsec:adjjoint_equation}
\EO{We begin the section by introducing the \emph{adjoint problem} as follows:}
Find $(\mathbf{z},r) \in \mathbf{H}_{0}^{1}(\EO{\rho^{-1}, }\Omega) \times L^{2}(\EO{\rho^{-1}, }\Omega)/\mathbb{R}$ such that
\begin{multline}
\label{eq:weak_adj_eq}
\displaystyle\int_{\Omega} 
\left(\nu \nabla \mathbf{z} : \nabla \mathbf{w} 
-
(\mathbf{y}_\mathcal{U}\cdot \nabla)\mathbf{z}\mathbf{w} 
+ 
\nabla\mathbf{y}_\mathcal{U}^{\intercal}\mathbf{z} \cdot \mathbf{w} 
- 
r\text{div }\mathbf{w}\right) \\
= \displaystyle\int_{\Omega}(\mathbf{y}_\mathcal{U} - \mathbf{y}_{\Omega})\cdot\mathbf{w},
\qquad
\displaystyle \int_{\Omega}  s\text{div }\mathbf{z}= 0,
\end{multline}
for all $(\mathbf{w},s) \in \mathbf{H}_{0}^{1}(\EO{\rho, }\Omega) \times L^{2}(\EO{\rho, }\Omega)/\mathbb{R}$. \EO{Here, $\mathbf{y}_{\mathcal{U}}$ denotes the velocity component of the unique solution $(\mathbf{y}_{\mathcal{U}},p_{\mathcal{U}}) \in \mathbf{H}_{0}^{1}(\rho,\Omega) \times L^{2}(\rho,\Omega)/\mathbb{R}$ to problem \eqref{eq:weak_st_eq}, associated to $\mathcal{U} \in \mathcal{O}(\bar{\mathcal{U}})$, in the neighborhood $\mathcal{O}(\bar{\mathbf{y}}) \times \mathcal{O}(\bar{p})$.}

\EO{The well-posedness of the adjoint problem in weighted spaces is as follows: Since $(\bar{\mathbf{y}},\bar{\mathcal{U}})$ is a local solution to \eqref{eq:weak_cost}--\eqref{eq:weak_st_eq} such that $\bar{\mathbf{y}}$ is regular, a direct application of item (ii) in Theorem \ref{thm:diff_prop} reveals that 
\[
Q'(\mathcal{U}):[\mathbb{R}^{2}]^{\ell} \rightarrow \mathbf{V}(\rho,\Omega) \times L^{2}(\rho,\Omega)/\mathbb{R}
\]
is an isomorphism for every $\mathcal{U} \in \mathcal{O}(\bar{\mathcal{U}})$; the characterization of $Q'(\mathcal{U})$ being available in the item (iii) of Theorem \ref{thm:diff_prop}. On the basis of this fact,  the duality argument elaborated within the proof of Theorem \ref{thm:wellposed_lin_NS_weighted} reveals that problem \eqref{eq:weak_adj_eq} admits a unique solution $(\mathbf{z},r) \in \mathbf{H}_0^1(\rho^{-1},\Omega) \times L^{2}(\rho^{-1},\Omega)/\mathbb{R}$. 
%\DQ{with $\bar{\mathbf{y}}$ replaced by $\mathbf{y}_{\mathcal{U}}$}. 
In addition, in view of \eqref{eq:stability_phi_weighted}, we have the following stability bound in weighted spaces:
\begin{equation}\label{eq:stab_bound_adj}
\begin{aligned}
\| \nabla \mathbf{z} \|_{\mathbf{L}^2(\rho^{-1},\Omega)}
+
\| r\|_{L^2(\rho^{-1},\Omega)} 
& \lesssim 
\| \mathbf{y}_\mathcal{U} - \mathbf{y}_{\Omega} \|_{\mathbf{H}_0^1(\rho,\Omega)'}
\\
& \lesssim 
\| \mathbf{y}_\mathcal{U} - \mathbf{y}_{\Omega} \|_{\mathbf{L}^2(\Omega)}.
\end{aligned}
\end{equation}

The following result guarantees that point evaluations of the velocity component $\mathbf{z}$ of the adjoint pair $(\mathbf{z},r)$ are well-defined.}

\begin{theorem}[Regularity Estimates]\label{thm:reg_estim_adj}
If $(\mathbf{z},r)$ solves \eqref{eq:weak_adj_eq}, then \EO{$\mathbf{z}$ belongs to} $\mathbf{W}^{1,q}(\Omega)$ for some $q > 2$. Consequently, $\mathbf{z} \in \mathbf{C}(\bar \Omega)$.
\end{theorem}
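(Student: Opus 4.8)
The plan is to bootstrap the regularity of $\mathbf{z}$ by viewing the adjoint equation \eqref{eq:weak_adj_eq} as a Stokes system with a right-hand side that is better than merely $\mathbf{H}_0^1(\rho,\Omega)'$. The starting point is what we already know: by the well-posedness discussion following \eqref{eq:weak_adj_eq}, $(\mathbf{z},r) \in \mathbf{H}_0^1(\rho^{-1},\Omega) \times L^2(\rho^{-1},\Omega)/\mathbb{R}$, and by Lemma \ref{lemma:density_H-1_dualH01_omega-1}(i) this gives in particular $\mathbf{z} \in \mathbf{H}_0^1(\Omega)$ with the stability bound \eqref{eq:stab_bound_adj}. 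So $\mathbf{z}$ is already an $H^1$ function; the point is to upgrade the integrability of $\nabla \mathbf{z}$ past the exponent $2$.

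First I would rewrite \eqref{eq:weak_adj_eq} as the Stokes problem
\[
-\nu \Delta \mathbf{z} + \nabla r = (\mathbf{y}_{\mathcal{U}} - \mathbf{y}_{\Omega}) + (\mathbf{y}_{\mathcal{U}} \cdot \nabla)\mathbf{z} - (\nabla \mathbf{y}_{\mathcal{U}})^{\intercal}\mathbf{z} \quad \text{in } \Omega,
\qquad \text{div } \mathbf{z} = 0, \qquad \mathbf{z} = \mathbf{0} \text{ on } \partial\Omega,
\]
and check that the forcing term $\mathbf{G} := (\mathbf{y}_{\mathcal{U}} - \mathbf{y}_{\Omega}) + (\mathbf{y}_{\mathcal{U}} \cdot \nabla)\mathbf{z} - (\nabla \mathbf{y}_{\mathcal{U}})^{\intercal}\mathbf{z}$ lies in $\mathbf{W}^{-1,q}(\Omega)$ for some $q > 2$. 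The linear term $\mathbf{y}_{\mathcal{U}} - \mathbf{y}_{\Omega}$ is harmless: $\mathbf{y}_{\mathcal{U}} \in \mathbf{L}^{2+\varepsilon}(\Omega)$ by Theorem \ref{thm:embedding_result} (and $\mathbf{y}_{\Omega} \in \mathbf{L}^2(\Omega)$), hence this term is in $\mathbf{L}^{2}(\Omega) \hookrightarrow \mathbf{W}^{-1,q}(\Omega)$ for $q$ slightly above $2$ by Sobolev embedding in two dimensions. For the two convective terms I would use that $\mathbf{y}_{\mathcal{U}} \in \mathbf{W}_0^{1,p}(\Omega)$ for every $p \in (4/3-\epsilon,2)$ (estimate \eqref{eq:NS_p_stab}), that $\mathbf{z} \in \mathbf{H}_0^1(\Omega) \hookrightarrow \mathbf{L}^{\beta}(\Omega)$ for all $\beta < \infty$, and that $\nabla \mathbf{z} \in \mathbf{L}^2(\Omega)$; combining these by Hölder one gets $(\mathbf{y}_{\mathcal{U}}\cdot\nabla)\mathbf{z} \in \mathbf{L}^{s}(\Omega)$ and $(\nabla \mathbf{y}_{\mathcal{U}})^{\intercal}\mathbf{z} \in \mathbf{L}^{s}(\Omega)$ for some $s > 1$, hence in $\mathbf{W}^{-1,q}(\Omega)$ for some $q>2$ again by two-dimensional Sobolev embedding. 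With $\mathbf{G} \in \mathbf{W}^{-1,q}(\Omega)$ in hand I invoke the $L^q$-regularity theory for the Stokes system on Lipschitz domains — the same result used in Remark \ref{rem:reg_linearized_vel}, namely \cite[Theorem 1.6]{MR2763343} (valid for $q$ in a neighborhood of $2$) — to conclude $(\mathbf{z},r) \in \mathbf{W}^{1,q}(\Omega) \times L^q(\Omega)/\mathbb{R}$. Finally, since $q > 2 = \dim \Omega$, the Sobolev embedding $\mathbf{W}^{1,q}(\Omega) \hookrightarrow \mathbf{C}(\bar\Omega)$ gives $\mathbf{z} \in \mathbf{C}(\bar\Omega)$, so point evaluations of $\mathbf{z}$ are well-defined.

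The main obstacle is making sure the exponents actually close up: one needs the integrability exponent $q$ coming out of the Stokes regularity result to genuinely exceed $2$, and simultaneously the Hölder bookkeeping for the convective terms must land at an $\mathbf{L}^s$ with $s$ large enough that the Sobolev embedding $\mathbf{L}^s(\Omega) \hookrightarrow \mathbf{W}^{-1,q}(\Omega)$ holds for that same $q>2$. Both are true in two dimensions because $\mathbf{H}_0^1(\Omega)$ embeds into every $\mathbf{L}^{\beta}$ with $\beta<\infty$ and because the Stokes operator on a Lipschitz planar domain is invertible on $\mathbf{W}^{1,q}$ for $q$ in an open interval containing $2$; a small amount of care is needed to pick one common $q>2$ that works for every term at once, which is a routine but not entirely automatic exponent chase. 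A single bootstrap step suffices — no iteration is required — precisely because the worst term, the linear source $\mathbf{y}_{\mathcal{U}}-\mathbf{y}_\Omega$, already sits in $\mathbf{L}^2(\Omega)$.
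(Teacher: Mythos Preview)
Your proposal is correct and follows essentially the same route as the paper: rewrite \eqref{eq:weak_adj_eq} as a Stokes system, verify that the forcing term lies in $\mathbf{W}^{-1,q}(\Omega)$ for some $q>2$ by H\"older-type estimates on the convective terms, and then invoke the $L^q$-Stokes regularity on Lipschitz domains. The only cosmetic difference is that the paper estimates the dual norm $\|\mathfrak{F}_1\|_{\mathbf{W}^{-1,q}}$ directly via a weighted H\"older splitting $\|\rho^{1/4}\|_{L^\infty}\|\mathbf{y}_{\mathcal U}\|_{\mathbf{L}^4(\rho)}\|\nabla\mathbf{z}\|_{\mathbf{L}^2(\rho^{-1})}\|\mathbf{w}\|_{\mathbf{L}^4}$, whereas you first pass to unweighted spaces via $\mathbf{z}\in\mathbf{H}_0^1(\Omega)$ and place the convective products in $\mathbf{L}^s(\Omega)$ for some $s>1$ before embedding into $\mathbf{W}^{-1,q}(\Omega)$; both bookkeeping choices land at the same conclusion.
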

\begin{proof}
\EO{We begin the proof by rewriting the adjoint equation as the following Stokes problem:}
\[
\begin{array}{rcl}
\displaystyle\int_{\Omega} \left(\nu \nabla \mathbf{z} : \nabla \mathbf{w} - r\text{div }\mathbf{w}\right) &=& \displaystyle\int_{\Omega}(\mathbf{y}_\mathcal{U} - \mathbf{y}_{\Omega})\cdot\mathbf{w}
+
\int_{\Omega} 
\left[
(\mathbf{y}_\mathcal{U}\cdot \nabla)\mathbf{z}\mathbf{w} 
- 
\nabla\mathbf{y}_\mathcal{U}^{\intercal}\mathbf{z} \cdot \mathbf{w} 
\right]
,\\
\displaystyle \int_{\Omega} s \text{div }\mathbf{z} &=& 0,
\end{array}
\]
for all \EO{$(\mathbf{w},s) \in \mathbf{H}_{0}^{1}(\rho,\Omega) \times L^{2}(\rho,\Omega)/\mathbb{R}$.}
%$(\mathbf{w},s) \in \mathbf{H}_{0}^{1}(\Omega) \times L^{2}(\Omega)/\mathbb{R}$. 

Denote $\mathbf{W}^{-1,q}(\Omega) = \mathbf{W}_0^{1,q'}(\Omega)'$ and define the linear functional $\mathfrak{F} := \mathfrak{F}_1 - \mathfrak{F}_2$, where $\mathfrak{F}_1, \mathfrak{F}_2: \mathbf{H}_0^1(\EO{\rho},\Omega) \rightarrow \mathbb{R}$ are defined by $\mathfrak{F}_1( \mathbf{w} ):= \int_{\Omega}(\mathbf{y}_\mathcal{U}\cdot \nabla)\mathbf{z}\mathbf{w}$ and $\mathfrak{F}_2( \mathbf{w} ):= \int_{\Omega}\nabla\mathbf{y}_\mathcal{U}^{\intercal}\mathbf{z} \cdot \mathbf{w}$. Let us prove that $\mathfrak{F} \in \mathbf{W}^{-1,q}(\Omega)$ for some $q>2$. To accomplish this task, we first study $\mathfrak{F}_1$ on the basis of \EO{H\"older's inequality:
\begin{equation*}
\begin{aligned}
\| \mathfrak{F}_1 \|_{\mathbf{W}^{-1,q}(\Omega)} 
& \leq \sup_{ \mathbf{w} \in \mathbf{W}_0^{1,q'}(\Omega)} \frac{\|\rho^{\frac{1}{4}}\|_{L^{\infty}(\Omega)} \| \mathbf{y}_{\mathcal{U}} \|_{\mathbf{L}^{4}(\rho,\Omega)} \| \nabla \mathbf{z} \|_{\mathbf{L}^{2}(\rho^{-1},\Omega)}  \| \mathbf{w} \|_{\mathbf{L}^{4}(\Omega)} }{ \| \nabla \mathbf{w} \|_{\mathbf{L}^{q'}(\Omega)}}
\\
& \lesssim
\|\rho^{\frac{1}{4}}\|_{L^{\infty}(\Omega)} \| \nabla \mathbf{y}_{\mathcal{U}} \|_{\mathbf{L}^{2}(\rho,\Omega)} \| \nabla \mathbf{z} \|_{\mathbf{L}^{2}(\rho^{-1},\Omega)},
\end{aligned}
\label{eq:F_1}
\end{equation*}
%\begin{multline}
%\begin{split}
%\| \mathfrak{F}_1 \|_{\mathbf{W}^{-1,q}(\Omega)} 
%\leq \sup_{ \mathbf{w} \in \mathbf{W}_0^{1,q'}(\Omega)} \frac{\| \mathbf{y}_{\mathcal{U}} \|_{\mathbf{L}^{\mu}(\Omega)} \| \nabla \mathbf{z} \|_{\mathbf{L}^{2}(\Omega)}  \| \mathbf{w} \|_{\mathbf{L}^{\kappa}(\Omega)} }{ \| \nabla \mathbf{w} \|_{\mathbf{L}^{q'}(\Omega)}}, \\
%\DQ{\lesssim
%\sup_{ \mathbf{w} \in \mathbf{W}_0^{1,q'}(\Omega)} \frac{\| \mathbf{y}_{\mathcal{U}} \|_{\mathbf{L}^{\mu}(\Omega)} \| \nabla \mathbf{z} \|_{\mathbf{L}^{2}(\rho^{-1},\Omega)}  \| \mathbf{w} \|_{\mathbf{L}^{\kappa}(\Omega)} }{ \| \nabla \mathbf{w} \|_{\mathbf{L}^{q'}(\Omega)}},}
%\end{split}
%\label{eq:F_1}
%\end{multline}
where we have used
%that $\mathbf{z} \in \mathbf{H}_{0}^{1}(\rho^{-1},\Omega)$, and the injections 
$\mathbf{H}_{0}^{1}(\rho,\Omega) \hookrightarrow \mathbf{L}^{4}(\rho,\Omega)$, $\mathbf{W}_{0}^{1,q'}(\Omega)\hookrightarrow \mathbf{L}^{4}(\Omega)$, which holds for $q' \geq 4/3$ $(q\leq4)$, and $\mathbf{z} \in \mathbf{H}_{0}^{1}(\rho^{-1},\Omega)$. We thus deduce the} existence of $q>2$ such that $\mathfrak{F}_1 \in \mathbf{W}^{-1,q}(\Omega)$ and $\| \mathfrak{F}_1 \|_{\mathbf{W}^{-1,q}(\Omega)}  \lesssim \| \nabla \mathbf{y}_{\mathcal{U}}  \|_{\mathbf{L}^{2}(\rho,\Omega)} \| \nabla \mathbf{z} \|_{\mathbf{L}^{2}(\EO{\rho^{-1},}\Omega)}$.

\EO{We now control the term $\mathfrak{F}_{2}$. To accomplish this task, we invoke H\"older's inequality combined with the fact that there exists $\epsilon = \epsilon (\Omega)>0$ such that $\mathbf{y}_{\mathcal{U}} \in \mathbf{W}_{0}^{1,p}(\Omega)$ for every $p \in (4/3 - \epsilon, 2)$: 
%
%and the injections $\mathbf{H}_{0}^{1}(\rho^{-1},\Omega) \hookrightarrow \mathbf{H}_{0}^{1}(\Omega) \hookrightarrow \mathbf{L}^{\beta}(\Omega)$ for all $\beta > 1$ (see Lemma \ref{lemma:density_H-1_dualH01_omega-1}) and $\mathbf{W}_{0}^{1,q'}(\Omega)\hookrightarrow \mathbf{L}^{4}(\Omega)$, with $q' \in (4/3,2)$ to obtain that
\begin{equation*}
\| \mathfrak{F}_{2} \|_{\mathbf{W}^{-1,q}(\Omega)} 
\leq \sup_{ \mathbf{w} \in \mathbf{W}_0^{1,q'}(\Omega)} \frac{\| \nabla \mathbf{y}_{\mathcal{U}} \|_{\mathbf{L}^{p}(\Omega)} \| \mathbf{z} \|_{\mathbf{L}^{\mu}(\Omega)}  \| \mathbf{w} \|_{\mathbf{L}^{\upsilon}(\Omega)} }{ \| \nabla \mathbf{w} \|_{\mathbf{L}^{q'}(\Omega)}},
\label{eq:F_2}
\end{equation*}
with $p^{-1} + \mu^{-1} + \upsilon^{-1} = 1$. Invoke now that $\mathbf{W}_{0}^{1,q'}(\Omega)\hookrightarrow \mathbf{L}^{\sigma}(\Omega)$, which holds for every $\sigma \leq 2q'/(2-q')$, that $\mathbf{z} \in \mathbf{H}_{0}^{1}(\rho^{-1},\Omega)$, and the Sobolev embeddings $\mathbf{H}_{0}^{1}(\rho^{-1},\Omega) \hookrightarrow \mathbf{H}_{0}^{1}(\Omega) \hookrightarrow \mathbf{L}^{\beta}(\Omega)$, which hold for every $\beta < \infty$, to arrive at the existence of $q>2$ such that
$
\| \mathfrak{F}_{2} \|_{\mathbf{W}^{-1,q}(\Omega)} \lesssim \| \nabla \mathbf{y}_{\mathcal{U}} \|_{\mathbf{L}^{p}(\Omega)} \|\nabla \mathbf{z} \|_{\mathbf{L}^{2}(\rho^{-1},\Omega)}.
$

Having} obtained the existence of $q>2$ such that $\mathfrak{F} \in \mathbf{W}^{-1,q}(\Omega)$, it suffices to invoke \cite[(1.52)]{MR2987056} to conclude that $\mathbf{z} \in \mathbf{W}^{1,q}(\Omega)$. \qed
\end{proof}

%%%%%%%%%%%%%%%%%%%%%%%%%%%%%%%%%%%%%%%%%%%%%%%%%%%%%%%%%%%%%%%%%
%%%%%%%%%%%%%%%%%%%%%%%%%%%%%%%%%%%%%%%%%%%%%%%%%%%%%%%%%%%%%%%%%
%%%%%%%%%%%%%%%%%%%%%%%%%%%%%%%%%%%%%%%%%%%%%%%%%%%%%%%%%%%%%%%%%
%%%%%%%%%%%%%%%%%%%%%%%%%%%%%%%%%%%%%%%%%%%%%%%%%%%%%%%%%%%%%%%%%
\subsection{First Order Optimality Conditions}
\label{subsec:first_oder_optimality_conditions}

\EO{In this section, we derive first order optimality conditions for the optimal control problem \eqref{eq:weak_cost}--\eqref{eq:weak_st_eq}. To accomplish this task, we begin this section by introducing some preliminary ingredients. Before presenting them, we recall that we are operating under the assumption that $(\bar{\mathbf{y}},\bar{\mathcal{U}})$ is a local solution to \eqref{eq:weak_cost}--\eqref{eq:weak_st_eq}, which is such that $\bar{\mathbf{y}}$ is regular. The first ingredient is the operator $\mathcal{G}$, which is defined as follows:
\begin{equation}\label{eq:mathcal_G}
\mathcal{G}: \mathcal{O}(\bar{\mathcal{U})} \subset [\mathbb{R}^{2}]^{\ell}
 \rightarrow
  \mathcal{O}(\bar{\mathbf{y}}) \subset \mathbf{H}_{0}^{1}(\rho,\Omega), \qquad \mathcal{U} \mapsto \mathbf{y},
\end{equation}
where $\mathbf{y}$ corresponds to the velocity component of the pair $(\mathbf{y},p) = Q(\mathcal{U})$.}
\EO{The second ingredient is the reduced cost functional:
\begin{equation}\label{def:red_cost_functional}
j: \mathcal{O}(\bar{\mathcal{U}}) \rightarrow \mathbb{R},
\qquad
j(\mathcal{U}):= \dfrac{1}{2}\| \mathcal{G}(\mathcal{U}) - \mathbf{y}_{\Omega} \|^{2}_{\mathbf{L}^{2}(\Omega)} + \dfrac{\eta}{2}\sum_{t \in \mathcal{D}}|\mathbf{u}_{t}|^{2}.
\end{equation}

Having defined the reduced cost functional, we present the following standard result: If $\bar{\mathcal{U}} \in \mathbb{U}_{ad}$ denotes a locally optimal control for problem \eqref{eq:weak_cost}--\eqref{eq:weak_st_eq}, then it satisfies the following variational inequality \cite[Lemma 4.18]{Troltzsch}:
\begin{equation}\label{eq:var_eq}
j'(\bar{\mathcal{U}})(\mathcal{U} - \bar{\mathcal{U}}) \geq 0 \quad \forall \mathcal{U} \in \mathbb{U}_{ad}.
\end{equation}

The following result explores the variational inequality \eqref{eq:var_eq}.}

\begin{theorem}[First Order Optimality Conditions]\label{thm:first_opt_cond}
\EO{If the pair $(\bar{\mathbf{y}},\bar{\mathcal{U}}) \in \mathbf{H}_{0}^{1}(\rho,\Omega) \times \mathbb{U}_{ad}$ denotes a local solution to the optimal control problem \eqref{eq:weak_cost}--\eqref{eq:weak_st_eq} such that $\bar{\mathbf{y}}$ is regular, then $\bar{\mathcal{U}} \in \mathbb{U}_{ad}$ satisfies the variational inequality
\begin{equation}
\sum_{t \in \mathcal{D}}(\bar{\mathbf{z}}(t) + \eta \bar{\mathbf{u}}_{t})\cdot (\mathbf{u}_{t} - \bar{\mathbf{u}}_{t}) \geq 0 \qquad \forall \mathcal{U} = (\mathbf{u}_{1},\ldots,\mathbf{u}_{\ell}) \in \mathbb{U}_{ad},
\label{eq:variational_inequality}
\end{equation}
where $(\bar{\mathbf{z}},\bar{r}) \in \mathbf{H}_{0}^{1}(\rho^{-1},\Omega) \times L^{2}(\rho^{-1},\Omega)/\mathbb{R}$ denotes the optimal adjoint pair, which solves the adjoint problem \eqref{eq:weak_adj_eq} with $\mathbf{y}_{\mathcal{U}}$ replaced by $\bar{\mathbf{y}}$.}
\end{theorem}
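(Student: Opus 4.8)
The plan is to start from the variational inequality \eqref{eq:var_eq} for the reduced cost functional $j$ and compute $j'(\bar{\mathcal{U}})$ explicitly using the chain rule, the differentiability properties of the solution map $Q$ (equivalently, of $\mathcal{G}$) established in Theorem \ref{thm:diff_prop}, and a duality identity involving the adjoint state. First I would observe that $j = J_1 \circ \mathcal{G} + J_2$, where $J_1(\mathbf{y}) = \tfrac12\|\mathbf{y} - \mathbf{y}_\Omega\|_{\mathbf{L}^2(\Omega)}^2$ and $J_2(\mathcal{U}) = \tfrac{\eta}{2}\sum_{t\in\mathcal{D}}|\mathbf{u}_t|^2$; both are smooth, and $\mathcal{G}$ is of class $C^2$ near $\bar{\mathcal{U}}$ by Theorem \ref{thm:diff_prop}. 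Hence, for an arbitrary direction $\mathcal{V} = (\mathbf{v}_1,\dots,\mathbf{v}_\ell) \in [\mathbb{R}^2]^\ell$,
\begin{equation*}
j'(\bar{\mathcal{U}})\mathcal{V} = \int_\Omega (\bar{\mathbf{y}} - \mathbf{y}_\Omega)\cdot \boldsymbol\theta_{\mathcal{V}} + \eta \sum_{t\in\mathcal{D}} \bar{\mathbf{u}}_t\cdot\mathbf{v}_t,
\end{equation*}
where $(\boldsymbol\theta_{\mathcal{V}},\xi_{\mathcal{V}}) = \mathcal{G}'(\bar{\mathcal{U}})\mathcal{V} = Q'(\bar{\mathcal{U}})\mathcal{V}$ is, by item (iii) of Theorem \ref{thm:diff_prop}, the unique solution to the linearized system \eqref{eq:lin_NS_weighted} with $\boldsymbol\Phi$ replaced by $\bar{\mathbf{y}}$ and $\mathbf{g}$ replaced by $\sum_{t\in\mathcal{D}}\mathbf{v}_t\delta_t$.

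The key step is then to rewrite the first term $\int_\Omega(\bar{\mathbf{y}} - \mathbf{y}_\Omega)\cdot\boldsymbol\theta_{\mathcal{V}}$ using the adjoint equation \eqref{eq:weak_adj_eq} with $\mathbf{y}_{\mathcal{U}}$ replaced by $\bar{\mathbf{y}}$. I would test \eqref{eq:weak_adj_eq} with $\mathbf{w} = \boldsymbol\theta_{\mathcal{V}}$ (which is admissible since $\boldsymbol\theta_{\mathcal{V}} \in \mathbf{H}_0^1(\rho,\Omega)$ and, being in $\mathbf{V}(\rho,\Omega)$, is divergence free, so the pressure term $\int_\Omega \bar{r}\,\mathrm{div}\,\boldsymbol\theta_{\mathcal{V}}$ vanishes), and test the linearized equation \eqref{eq:lin_NS_weighted} (for $\boldsymbol\theta_{\mathcal{V}}$) with $\mathbf{w} = \bar{\mathbf{z}}$ (admissible since $\bar{\mathbf{z}} \in \mathbf{H}_0^1(\rho^{-1},\Omega)$ and is divergence free, so the term $\int_\Omega \xi_{\mathcal{V}}\,\mathrm{div}\,\bar{\mathbf{z}}$ vanishes). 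Matching the bilinear forms: the viscous terms $\int_\Omega \nu\nabla\boldsymbol\theta_{\mathcal{V}}:\nabla\bar{\mathbf{z}}$ coincide, and an integration-by-parts computation shows that the convective terms $\int_\Omega (\bar{\mathbf{y}}\otimes\boldsymbol\theta_{\mathcal{V}} + \boldsymbol\theta_{\mathcal{V}}\otimes\bar{\mathbf{y}}):\nabla\bar{\mathbf{z}}$ in \eqref{eq:lin_NS_weighted} are precisely the transpose-adjoint of the terms $-(\bar{\mathbf{y}}\cdot\nabla)\bar{\mathbf{z}}\cdot\boldsymbol\theta_{\mathcal{V}} + (\nabla\bar{\mathbf{y}})^\intercal\bar{\mathbf{z}}\cdot\boldsymbol\theta_{\mathcal{V}}$ appearing in \eqref{eq:weak_adj_eq} (the same bookkeeping already carried out in \emph{Step 1} of the proof of Theorem \ref{thm:wellposed_lin_NS_weighted} for the maps $T$ and $S$). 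This yields
\begin{equation*}
\int_\Omega(\bar{\mathbf{y}} - \mathbf{y}_\Omega)\cdot\boldsymbol\theta_{\mathcal{V}} = \left\langle \sum_{t\in\mathcal{D}}\mathbf{v}_t\delta_t, \bar{\mathbf{z}}\right\rangle = \sum_{t\in\mathcal{D}}\mathbf{v}_t\cdot\bar{\mathbf{z}}(t),
\end{equation*}
where the last equality uses that $\bar{\mathbf{z}} \in \mathbf{C}(\bar\Omega)$ by Theorem \ref{thm:reg_estim_adj}, so the point evaluations $\bar{\mathbf{z}}(t)$ are well-defined and the duality pairing of $\mathbf{v}_t\delta_t \in \mathbf{H}_0^1(\rho^{-1},\Omega)'$ with $\bar{\mathbf{z}}$ reduces to a point evaluation.

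Combining the two displays gives $j'(\bar{\mathcal{U}})\mathcal{V} = \sum_{t\in\mathcal{D}}(\bar{\mathbf{z}}(t) + \eta\bar{\mathbf{u}}_t)\cdot\mathbf{v}_t$ for every $\mathcal{V} \in [\mathbb{R}^2]^\ell$. Finally, specializing $\mathcal{V} = \mathcal{U} - \bar{\mathcal{U}}$ for an arbitrary $\mathcal{U} \in \mathbb{U}_{ad}$ (a legitimate choice since $\mathbb{U}_{ad}$ is convex, so $\mathcal{U} - \bar{\mathcal{U}}$ lies in the tangent cone and $\bar{\mathcal{U}} + s(\mathcal{U} - \bar{\mathcal{U}}) \in \mathbb{U}_{ad}$ for small $s>0$, placing it in the neighborhood $\mathcal{O}(\bar{\mathcal{U}})$ where $j$ is differentiable) and inserting into \eqref{eq:var_eq} yields exactly \eqref{eq:variational_inequality}. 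The main obstacle is the bookkeeping in the duality identity: one must carefully justify the integrations by parts in the weighted setting — in particular that all the products $\bar{\mathbf{y}}\otimes\boldsymbol\theta_{\mathcal{V}}:\nabla\bar{\mathbf{z}}$, $(\nabla\bar{\mathbf{y}})^\intercal\bar{\mathbf{z}}\cdot\boldsymbol\theta_{\mathcal{V}}$, etc., are integrable (which follows from the weighted Hölder inequalities and embeddings $\mathbf{H}_0^1(\rho,\Omega)\hookrightarrow\mathbf{L}^4(\rho,\Omega)$, $\mathbf{H}_0^1(\rho^{-1},\Omega)\hookrightarrow\mathbf{L}^4(\rho^{-1},\Omega)$ used throughout Section \ref{sec:NS_singular_rhs}), and that no boundary terms appear since all functions vanish on $\partial\Omega$ in the appropriate weighted trace sense; the verification that $\mathbf{v}_t\delta_t \in \mathbf{H}_0^1(\rho^{-1},\Omega)'$ (already recorded in Section \ref{sec:particular_weight}) together with $\bar{\mathbf{z}} \in \mathbf{H}_0^1(\rho^{-1},\Omega) \cap \mathbf{C}(\bar\Omega)$ closes the argument.
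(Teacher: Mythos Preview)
Your proposal is correct and follows essentially the same approach as the paper's own proof: both start from the abstract variational inequality \eqref{eq:var_eq}, compute $j'(\bar{\mathcal{U}})$ via the chain rule and the characterization of $Q'(\bar{\mathcal{U}})$ in Theorem \ref{thm:diff_prop}(iii), test the linearized and adjoint equations against each other, match the bilinear forms by integration by parts, and invoke $\bar{\mathbf{z}}\in\mathbf{C}(\bar\Omega)$ from Theorem \ref{thm:reg_estim_adj} to reduce the duality pairing to point evaluations. The only cosmetic difference is that you compute $j'(\bar{\mathcal{U}})\mathcal{V}$ for a general direction $\mathcal{V}$ and then specialize to $\mathcal{V}=\mathcal{U}-\bar{\mathcal{U}}$, whereas the paper works with $\mathcal{U}-\bar{\mathcal{U}}$ from the outset; also note the minor notational slip $\mathcal{G}'(\bar{\mathcal{U}})\mathcal{V}=Q'(\bar{\mathcal{U}})\mathcal{V}$ (the former is only the velocity component $\boldsymbol\theta_{\mathcal{V}}$, not the full pair).
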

\begin{proof}
We begin the proof by computing the expression $j'(\bar{\mathcal{U}})(\mathcal{U} - \bar{ \mathcal{U}})$ and rewriting the basic variational inequality \eqref{eq:var_eq} as follows:
\begin{equation}
\displaystyle\int_{\Omega}(\bar{\mathbf{y}}-\mathbf{y}_{\Omega}) \cdot \mathcal{G}'(\bar{\mathcal{U}}) (\mathcal{U} - \bar{\mathcal{U}})
+ 
\eta\sum_{t\in\mathcal{D}}\bar{\mathbf{u}}_t\cdot(\mathbf{u}_t-\bar{\mathbf{u}}_t)\geq 0
\quad
\forall 
\mathcal{U} \in \mathbb{U}_{ad}.
\label{eq:vi_first}
\end{equation}
%for all $\mathcal{U}=(\mathbf{u}_1,\ldots,\mathbf{u}_l)\in\mathbb{U}_{ad}$. 
Define $\boldsymbol{\theta}:= \mathcal{G}'(\bar{\mathcal{U}}) (\mathcal{U} - \bar{\mathcal{U}})$. Observe that $(\boldsymbol{\theta},\xi) \in \mathbf{H}_{0}^{1}(\rho,\Omega) \times L^{2}(\rho,\Omega)/\mathbb{R}$
%$\mathbf{H}_{0}^{1}(\rho,\Omega) \times L^{2}(\rho,\Omega)/\mathbb{R}$ 
solves
\begin{multline}\label{eq:thm8_eq2}
\displaystyle\int_{\Omega} \left(\nu \nabla \boldsymbol\theta : \nabla \mathbf{v}
-
\bar{\mathbf{y}} \otimes \boldsymbol\theta : \nabla\mathbf{v} 
- 
\boldsymbol{\theta} \otimes \bar{\mathbf{y}} : \nabla\mathbf{v}
- 
\xi\text{div }\mathbf{v}\right) \\
= \displaystyle\sum_{t\in\mathcal{D}}\langle(\mathbf{u}_t - \bar{\mathbf{u}}_t)\delta_{t},\mathbf{v}\rangle,
\qquad
\displaystyle \int_{\Omega} q\text{div }\boldsymbol\theta  = 0,
\end{multline}
for all $(\mathbf{v},q) \in \mathbf{H}_{0}^{1}(\rho^{-1},\Omega) \times L^{2}(\rho^{-1},\Omega)/\mathbb{R}$.
%$\mathbf{H}_{0}^{1}(\rho^{-1},\Omega) \times L^{2}(\rho^{-1},\Omega)/\mathbb{R}$. 
Having introduced the pair $(\boldsymbol{\theta},\xi)$, the variational inequality \eqref{eq:vi_first} becomes
\begin{equation}\label{eq:thm8_eq1}
\displaystyle\int_{\Omega}(\bar{\mathbf{y}}-\mathbf{y}_{\Omega})\cdot\boldsymbol{\theta}+\eta\sum_{t\in\mathcal{D}}\bar{\mathbf{u}}_t\cdot(\mathbf{u}_t-\bar{\mathbf{u}}_t)\geq 0
\qquad
\forall 
\mathcal{U}=(\mathbf{u}_1,\ldots,\mathbf{u}_l)\in\mathbb{U}_{ad}.
\end{equation}
Since the second term on the right-hand side of the previous expression is already present in the  desired inequality \eqref{eq:variational_inequality}, we focus on the first term.

%In view of the \DQ{\emph{Step 2} of Theorem \ref{thm:wellposed_lin_NS_weighted}}, we are allowed to 
\EO{Let us} set $(\bar{\mathbf{z}},\bar{r}) \in \mathbf{H}_{0}^{1}(\rho^{-1},\Omega) \times L^{2}(\rho^{-1},\Omega)/\mathbb{R}$
%$\mathbf{H}_{0}^{1}(\rho^{-1},\Omega) \times L^{2}(\rho^{-1},\Omega)/\mathbb{R}$ 
as a test pair in problem \eqref{eq:thm8_eq2}. This yields
\begin{equation}\label{eq:thm8_eq3}
%\DQ{\displaystyle\int_{\Omega} \nu \nabla \boldsymbol\theta : \nabla  \bar{\mathbf{z}}
%-
%(\bar{\mathbf{y}}\cdot \nabla)\boldsymbol\theta\bar{\mathbf{z}} 
%+ 
%(\nabla \bar{\mathbf{y}})^{\intercal}\boldsymbol{\theta}\cdot\bar{\mathbf{z}}
%= \sum_{t\in\mathcal{D}} (\mathbf{u}_t - \bar{\mathbf{u}}_t) \cdot \bar{\mathbf{z}}(t),}
\displaystyle\int_{\Omega} \left(\nu \nabla \boldsymbol\theta : \nabla \bar{\mathbf{z}}
-
\bar{\mathbf{y}} \otimes \boldsymbol\theta : \nabla\bar{\mathbf{z}} 
- 
\boldsymbol{\theta} \otimes \bar{\mathbf{y}} : \nabla\bar{\mathbf{z}}
\right) 
= \sum_{t\in\mathcal{D}} (\mathbf{u}_t - \bar{\mathbf{u}}_t) \cdot \bar{\mathbf{z}}(t),
\end{equation}
upon utilizing the fact that $\int_{\Omega} \xi \text{div } \bar{\mathbf{z}}$ vanishes and that there exists $q>2$ such that $\bar{\mathbf{z}} \in \mathbf{W}^{1,q}(\Omega) \hookrightarrow \mathbf{C}(\bar \Omega)$ (cf.~Theorem \ref{thm:reg_estim_adj}). \EO{We now set $\mathbf{w} = \boldsymbol\theta$ as a test function in the first equation of problem \eqref{eq:weak_adj_eq} to obtain}
\begin{equation}\label{eq:thm8_eq4}
\displaystyle\int_{\Omega} \left(\nu \nabla  \bar{\mathbf{z}} : \nabla \boldsymbol\theta
-
(\bar{\mathbf{y}}\cdot \nabla)\bar{\mathbf{z}} \boldsymbol\theta
+ 
\nabla \bar{\mathbf{y}}^{\intercal}\bar{\mathbf{z}}\cdot
\boldsymbol{\theta}
- 
\bar{r}\text{div }\boldsymbol{\theta}
\right) = \int_{\Omega}(\bar{\mathbf{y}} - \mathbf{y}_{\Omega}) \cdot \boldsymbol\theta.
\end{equation}
We \EO{thus utilize \eqref{eq:thm8_eq3}, \eqref{eq:thm8_eq4}, the fact that $\int_{\Omega} \bar{r}\text{div }\boldsymbol\theta$ vanishes, and an integration by parts formula for the convective terms in \eqref{eq:thm8_eq4} to arrive at the needed relation $\sum_{t\in\mathcal{D}} (\mathbf{u}_t - \bar{\mathbf{u}}_t) \cdot \bar{\mathbf{z}}(t) = \int_{\Omega}(\bar{\mathbf{y}} - \mathbf{y}_{\Omega}) \cdot \boldsymbol\theta$. In view of \eqref{eq:thm8_eq1}, the previously derived identity allows to arrive at \eqref{eq:variational_inequality}.} \qed
\end{proof}
%
%%%%%%%%%%%%%%%%%%%%%%%%%%%%%%%%%%%%%%%%%%%%%%%%%%%%%%%%%%%%%%%%%
%%%%%%%%%%%%%%%%%%%%%%%%%%%%%%%%%%%%%%%%%%%%%%%%%%%%%%%%%%%%%%%%%
%%%%%%%%%%%%%%%%%%%%%%%%%%%%%%%%%%%%%%%%%%%%%%%%%%%%%%%%%%%%%%%%%
%%%%%%%%%%%%%%%%%%%%%%%%%%%%%%%%%%%%%%%%%%%%%%%%%%%%%%%%%%%%%%%%%
%%
\subsection{Second Order Optimality Conditions}
\label{subsec:second_order_optimality_conditions}

\EO{In this section, we analyze necessary and sufficient second order optimality conditions.}

\subsubsection{\EO{Auxiliary Results}}
\EO{We begin this section by recalling that, as stated at the beginning of Section \ref{sec:op_conditions}, we are operating under the assumption that $(\bar{\mathbf{y}},\bar{\mathcal{U}}) \in \mathbf{H}_{0}^{1}(\rho,\Omega) \times \mathbb{U}_{ad}$ is a local solution of \eqref{eq:weak_cost}--\eqref{eq:weak_st_eq} such that  $\bar{\mathbf{y}}$ is regular.} 
%\DQ{From this section we make the assumption that the neighborhood $\mathcal{O}(\bar{\mathcal{U}})$  is bounded and convex. Otherwise we can restrict the analysis to a new neigborhood contained in $\mathcal{O}(\bar{\mathcal{U}})$ with such properties.}

\EO{We begin our studies with the following estimate.
\begin{lemma}[Auxiliary Estimate]
\label{lemma:auxiliary_estimate}
Let $\mathcal{U}\in \mathcal{O}(\bar{\mathcal{U}})$ and $\mathcal{V}\in[\mathbb{R}^{2}]^{\ell}$. Let $\mathbf{y}=\mathcal{G}(\mathcal{U})$, $(\boldsymbol\theta,\xi) = Q'(\mathcal{U})\mathcal{V}$, and $(\bar{\boldsymbol\theta},\bar{\xi}) = Q'(\bar{\mathcal{U}})\mathcal{V}$. Then, we have the estimate
\begin{equation}\label{eq:estimate_thetas}
\|\nabla (\boldsymbol\theta - \bar{\boldsymbol\theta})\|_{\mathbf{L}^2(\rho,\Omega)} \lesssim \|\mathcal{U} - \bar{\mathcal{U}}\|_{[\mathbb{R}^2]^{\ell}} \|\mathcal{V}\|_{[\mathbb{R}^2]^{\ell}}
\end{equation}
where the hidden constant is independent of $(\boldsymbol\theta,\xi)$, $(\bar{\boldsymbol\theta},\bar{\xi})$, and $\mathcal{V}$.
\end{lemma}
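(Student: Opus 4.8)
The plan is to subtract the two linearized systems, to recognize the resulting equation for the difference as an instance of \eqref{eq:lin_NS_weighted}, and then to invoke the stability estimate \eqref{eq:stability_lin_NS_weighted} of Theorem \ref{thm:wellposed_lin_NS_weighted}. By item (iii) of Theorem \ref{thm:diff_prop}, the pair $(\boldsymbol\theta,\xi) = Q'(\mathcal{U})\mathcal{V}$ solves \eqref{eq:lin_NS_weighted} with $\boldsymbol\Phi$ replaced by $\mathbf{y} = \mathcal{G}(\mathcal{U})$ and forcing $\mathbf{G} := \sum_{t\in\mathcal{D}}\mathbf{v}_t\delta_t$, while $(\bar{\boldsymbol\theta},\bar{\xi}) = Q'(\bar{\mathcal{U}})\mathcal{V}$ solves the analogous problem with $\boldsymbol\Phi = \bar{\mathbf{y}}$ and the same forcing $\mathbf{G}$. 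Setting $(\boldsymbol\delta,\pi) := (\boldsymbol\theta - \bar{\boldsymbol\theta},\, \xi - \bar{\xi})$, subtracting the two weak formulations, and using the algebraic identities $\mathbf{y}\otimes\boldsymbol\theta - \bar{\mathbf{y}}\otimes\bar{\boldsymbol\theta} = \mathbf{y}\otimes\boldsymbol\delta + (\mathbf{y}-\bar{\mathbf{y}})\otimes\bar{\boldsymbol\theta}$ and $\boldsymbol\theta\otimes\mathbf{y} - \bar{\boldsymbol\theta}\otimes\bar{\mathbf{y}} = \boldsymbol\delta\otimes\mathbf{y} + \bar{\boldsymbol\theta}\otimes(\mathbf{y}-\bar{\mathbf{y}})$, one finds that $(\boldsymbol\delta,\pi) \in \mathbf{H}_0^1(\rho,\Omega)\times L^2(\rho,\Omega)/\mathbb{R}$ solves \eqref{eq:lin_NS_weighted} with $\boldsymbol\Phi = \mathbf{y}$ (the divergence constraint being inherited from those of $\boldsymbol\theta$ and $\bar{\boldsymbol\theta}$) and right-hand side
\[
\langle \mathbf{g}_\delta,\mathbf{w}\rangle = \int_\Omega \big( (\mathbf{y}-\bar{\mathbf{y}})\otimes\bar{\boldsymbol\theta} + \bar{\boldsymbol\theta}\otimes(\mathbf{y}-\bar{\mathbf{y}}) \big) : \nabla\mathbf{w}, \qquad \mathbf{w}\in\mathbf{H}_0^1(\rho^{-1},\Omega).
\]
Since $\mathbf{y}$ is regular for every $\mathcal{U}\in\mathcal{O}(\bar{\mathcal{U}})$ (this is already implicit in item (iii) of Theorem \ref{thm:diff_prop}), Theorem \ref{thm:wellposed_lin_NS_weighted} applies, so that $(\boldsymbol\delta,\pi)$ is the unique solution associated to $\mathbf{g}_\delta$ and
\[
\|\nabla\boldsymbol\delta\|_{\mathbf{L}^2(\rho,\Omega)} + \|\pi\|_{L^2(\rho,\Omega)} \lesssim \big(1 + \|\nabla\mathbf{y}\|_{\mathbf{L}^p(\Omega)}\big)\,\|\mathbf{g}_\delta\|_{\mathbf{H}_0^1(\rho^{-1},\Omega)'},
\]
with $p\in(4/3-\epsilon,2)$ and $\epsilon = \epsilon(\Omega)>0$.

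The heart of the argument is then to bound $\|\mathbf{g}_\delta\|_{\mathbf{H}_0^1(\rho^{-1},\Omega)'}$. I would distribute the Muckenhoupt weight as $\rho = \rho^{1/4}\cdot\rho^{1/4}\cdot\rho^{-1/2}$ and apply H\"older's inequality with exponents $4$, $4$, $2$ to the factors $(\mathbf{y}-\bar{\mathbf{y}})$, $\bar{\boldsymbol\theta}$, $\nabla\mathbf{w}$, which shows $\mathbf{g}_\delta \in \mathbf{H}_0^1(\rho^{-1},\Omega)'$ with $\|\mathbf{g}_\delta\|_{\mathbf{H}_0^1(\rho^{-1},\Omega)'} \lesssim \|\mathbf{y}-\bar{\mathbf{y}}\|_{\mathbf{L}^4(\rho,\Omega)}\,\|\bar{\boldsymbol\theta}\|_{\mathbf{L}^4(\rho,\Omega)}$. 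For the first factor, the weighted embedding $\mathbf{H}_0^1(\rho,\Omega)\hookrightarrow\mathbf{L}^4(\rho,\Omega)$ (\cite[Theorem 4.12]{MR2797702}; see also \cite[Proposition 2]{MR3998864}) combined with the Lipschitz estimate of Lemma \ref{lemma:lips_prop} yields $\|\mathbf{y}-\bar{\mathbf{y}}\|_{\mathbf{L}^4(\rho,\Omega)} \lesssim \|\nabla(\mathbf{y}-\bar{\mathbf{y}})\|_{\mathbf{L}^2(\rho,\Omega)} \lesssim \|\mathcal{U}-\bar{\mathcal{U}}\|_{[\mathbb{R}^2]^\ell}$. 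For the second factor, the same embedding together with the stability bound \eqref{eq:stability_lin_NS_weighted} applied to $(\bar{\boldsymbol\theta},\bar{\xi})$ give $\|\bar{\boldsymbol\theta}\|_{\mathbf{L}^4(\rho,\Omega)} \lesssim \|\nabla\bar{\boldsymbol\theta}\|_{\mathbf{L}^2(\rho,\Omega)} \lesssim (1+\|\nabla\bar{\mathbf{y}}\|_{\mathbf{L}^p(\Omega)})\,\|\mathbf{G}\|_{\mathbf{H}_0^1(\rho^{-1},\Omega)'}$; since $\delta_t\in H_0^1(\rho^{-1},\Omega)'$ (cf.~Sect.~\ref{sec:particular_weight}), one has $\|\mathbf{G}\|_{\mathbf{H}_0^1(\rho^{-1},\Omega)'} \le \sum_{t\in\mathcal{D}}|\mathbf{v}_t|\,\|\delta_t\|_{H_0^1(\rho^{-1},\Omega)'} \lesssim \|\mathcal{V}\|_{[\mathbb{R}^2]^\ell}$, and, $\|\nabla\bar{\mathbf{y}}\|_{\mathbf{L}^p(\Omega)}$ being a fixed finite quantity, we conclude $\|\bar{\boldsymbol\theta}\|_{\mathbf{L}^4(\rho,\Omega)} \lesssim \|\mathcal{V}\|_{[\mathbb{R}^2]^\ell}$.

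Collecting these bounds gives $\|\nabla(\boldsymbol\theta - \bar{\boldsymbol\theta})\|_{\mathbf{L}^2(\rho,\Omega)} \le \|\nabla\boldsymbol\delta\|_{\mathbf{L}^2(\rho,\Omega)} + \|\pi\|_{L^2(\rho,\Omega)} \lesssim \big(1 + \|\nabla\mathbf{y}\|_{\mathbf{L}^p(\Omega)}\big)\|\mathcal{U}-\bar{\mathcal{U}}\|_{[\mathbb{R}^2]^\ell}\|\mathcal{V}\|_{[\mathbb{R}^2]^\ell}$. Finally, I would absorb $1 + \|\nabla\mathbf{y}\|_{\mathbf{L}^p(\Omega)}$ into the hidden constant after recalling, as in the proof of Theorem \ref{thm:wellposed_lin_NS_weighted} (cf.~\eqref{eq:NS_p_stab}), that the states $\mathbf{y} = \mathcal{G}(\mathcal{U})$ remain bounded in $\mathbf{W}_0^{1,p}(\Omega)$ uniformly over the bounded neighborhood $\mathcal{O}(\bar{\mathcal{U}})$; the resulting constant then depends only on $Q'$, $\mathcal{O}(\bar{\mathcal{U}})$, $\Omega$, and $\nu$, and not on $(\boldsymbol\theta,\xi)$, $(\bar{\boldsymbol\theta},\bar{\xi})$, or $\mathcal{V}$, which is \eqref{eq:estimate_thetas}. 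The step I expect to be the main obstacle is the H\"older estimate for $\mathbf{g}_\delta$: the weight must be split so that each of the three factors lands in a weighted Lebesgue space for which an embedding or an a priori bound is available, which is exactly what forces the $\rho^{1/4}\cdot\rho^{1/4}\cdot\rho^{-1/2}$ balancing; one should also verify that $\mathbf{y}_{\mathcal{U}}$ is regular for all $\mathcal{U}\in\mathcal{O}(\bar{\mathcal{U}})$ so that Theorem \ref{thm:wellposed_lin_NS_weighted} genuinely applies to $(\boldsymbol\delta,\pi)$. The remaining steps are a routine chain of H\"older inequalities and continuity estimates.
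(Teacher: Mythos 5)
Your proposal is correct and follows essentially the same route as the paper: subtract the two linearized systems, view the difference as a solution of \eqref{eq:lin_NS_weighted} with the commutator terms as a new forcing, bound that forcing in $\mathbf{H}_0^1(\rho^{-1},\Omega)'$ via the $\rho^{1/4}\cdot\rho^{1/4}\cdot\rho^{-1/2}$ H\"older split and the embedding $\mathbf{H}_0^1(\rho^{\pm 1},\Omega)\hookrightarrow\mathbf{L}^4(\rho^{\pm 1},\Omega)$, and conclude with the Lipschitz property of Lemma \ref{lemma:lips_prop}. The only (cosmetic) difference is that the paper centers the linearization at $\bar{\mathbf{y}}$ rather than at $\mathbf{y}=\mathcal{G}(\mathcal{U})$, which sidesteps the uniformity-in-$\mathcal{U}$ issues for the stability constant that you rightly flag at the end.
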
}
\begin{proof}
\EO{We begin the proof by noticing that $(\boldsymbol\theta - \bar{\boldsymbol\theta},\xi - \bar{\xi})$
solves the following problem: Find $(\boldsymbol\theta - \bar{\boldsymbol\theta},\xi - \bar{\xi}) \in \mathbf{H}_{0}^{1}(\rho,\Omega) \times L^{2}(\rho,\Omega)/\mathbb{R}$ such that
\begin{multline}\label{eq:bar_theta-theta}
\int_{\Omega} ( \nu \nabla (\boldsymbol\theta - \bar{\boldsymbol\theta}) : \nabla \mathbf{w} - \bar{\mathbf{y}}\otimes (\boldsymbol\theta -\bar{\boldsymbol\theta}) : \nabla \mathbf{w} - (\boldsymbol\theta -\bar{\boldsymbol\theta})  \otimes \bar{\mathbf{y}}: \nabla \mathbf{w} - (\xi - \bar{\xi})\text{div }\mathbf{w} ) \\
= \int_{\Omega} [(\mathbf{y} - \bar{\mathbf{y}})\otimes \boldsymbol\theta + \boldsymbol\theta \otimes (\mathbf{y} - \bar{\mathbf{y}})] : \nabla \mathbf{w}, \quad  \int_{\Omega} s\text{div }(\boldsymbol\theta - \bar{\boldsymbol\theta}) = 0,
\end{multline}
for all $(\mathbf{w},s)\in \mathbf{H}_{0}^{1}(\rho^{-1},\Omega) \times L^{2}(\rho^{-1},\Omega)/\mathbb{R}$. Since $\bar{\mathbf{y}}$ is regular, a direct application of Theorem \ref{thm:wellposed_lin_NS_weighted} reveals that problem \eqref{eq:bar_theta-theta} is well-posed upon realizing that the forcing term of the momentum equation belongs to the dual space of $\mathbf{H}_0^1(\rho^{-1},\Omega)$; see the estimates in \eqref{eq:estimate_hat_theta-theta} below.}
%Since $\bar{\mathbf{y}}$ is regular,
%%and $\mathcal{U}\in \mathcal{O}(\bar{\mathcal{U}})$, 
%it follows, from Theorem \ref{thm:wellposed_lin_NS_weighted}, that problem \eqref{eq:bar_theta-theta} is well posed.}

\EO{To derive \eqref{eq:estimate_thetas} we invoke the stability estimate \eqref{eq:stability_lin_NS_weighted}, H\"older's inequality, the weighted embeddings $\mathbf{H}_0^1(\rho^{\pm},\Omega)\hookrightarrow \mathbf{L}^4(\rho^{\pm},\Omega)$, and the Lipschitz property of Lemma \ref{lemma:lips_prop}. With these arguments and estimates, we have
% for the difference $\mathbf{y} - \bar{\mathbf{y}} = \mathcal{G}(\mathcal{U}) - \mathcal{G}(\bar{\mathcal{U}})$, it follows that
\begin{equation}
\begin{aligned}
\|\nabla(\boldsymbol\theta - \bar{\boldsymbol\theta})\|_{\mathbf{L}^2(\rho,\Omega)}
&  \lesssim 
%\DQ{(1 + \|\nabla \mathbf{y}\|_{\mathbf{L}^2(\rho,\Omega)} + \|\nabla \bar{\mathbf{y}}\|_{\mathbf{L}^2(\rho,\Omega)})}
\|(\mathbf{y} - \bar{\mathbf{y}}) \otimes \boldsymbol\theta  + \bar{\boldsymbol\theta} \otimes (\mathbf{y} - \bar{\mathbf{y}})\|_{\mathbf{H}_0^1(\rho^{-1},\Omega)'}
\\
& \lesssim
%\DQ{(1 + \|\nabla \mathbf{y}\|_{\mathbf{L}^2(\rho,\Omega)} + \|\nabla \bar{\mathbf{y}}\|_{\mathbf{L}^2(\rho,\Omega)})}
\left[ \|\nabla\boldsymbol\theta\|_{\mathbf{L}^2(\rho,\Omega)} + \|\nabla\bar{\boldsymbol\theta}\|_{\mathbf{L}^2(\rho,\Omega)} \right]
\|\nabla(\mathbf{y} - \bar{\mathbf{y}})\|_{\mathbf{L}^2(\rho,\Omega)}
\\
& \lesssim
\|\mathcal{V}\|_{[\mathbb{R}^2]^{\ell}}
 \|\mathcal{U}-\bar{\mathcal{U}}\|_{[\mathbb{R}^2]^{\ell}}.
\end{aligned}
\label{eq:estimate_hat_theta-theta}
\end{equation}
This concludes the proof. \qed}
\end{proof}

\EO{We conclude this section with the following result.}

\begin{theorem}[Properties of $j''$]\label{thm:property_j''} 
\EO{The reduced cost functional $j$, defined in \eqref{def:red_cost_functional}, is of class $C^2$. In addition, for $\mathcal{U}\in \mathcal{O}(\bar{\mathcal{U}})$ and $\mathcal{V}\in[\mathbb{R}^{2}]^{\ell}$, we have}
\begin{equation}\label{eq:characterization_j''}
\EO{j''(\mathcal{U})\mathcal{V}^{2} = \int_{\Omega}|\boldsymbol\theta|^{2} + 2\int_{\Omega}\boldsymbol\theta\otimes\boldsymbol\theta:\nabla \mathbf{z} + \eta\sum_{t\in\mathcal{D}}|\mathbf{v}_{t}|^{2}.}
\end{equation}
\EO{Here, $(\boldsymbol\theta,\xi) = Q'(\mathcal{U})\mathcal{V}$ and $(\mathbf{z},r)$ denotes the unique solution of \eqref{eq:weak_adj_eq} with $\mathbf{y}_{\mathcal{U}} = \mathcal{G}(\mathcal{U})$. Finally, we have the bound}
\begin{equation}\label{eq:j''_lipschitz}
\EO{|j''(\mathcal{U})\mathcal{V}^2-j''(\bar{\mathcal{U}})\mathcal{V}^2|\lesssim \|\mathcal{U}-\bar{\mathcal{U}}\|_{[\mathbb{R}^2]^{\ell}}\|\mathcal{V}\|_{[\mathbb{R}^2]^{\ell}}^2.}
\end{equation}
\end{theorem}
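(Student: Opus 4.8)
The plan is to establish the three claims in turn. First, $j$ is of class $C^{2}$: the map $\mathcal{G}$ of \eqref{eq:mathcal_G} is the velocity component of the $C^{2}$ map $Q$ of Theorem~\ref{thm:diff_prop} (post-composed with a bounded linear projection), hence $C^{2}$; since $\mathbf{H}_{0}^{1}(\rho,\Omega)\hookrightarrow\mathbf{L}^{2}(\Omega)$ is linear and bounded (Theorem~\ref{thm:embedding_result}) and $\mathbf{v}\mapsto\tfrac12\|\mathbf{v}-\mathbf{y}_{\Omega}\|_{\mathbf{L}^{2}(\Omega)}^{2}$ is smooth on $\mathbf{L}^{2}(\Omega)$, the chain rule gives $j\in C^{2}$ and, for $\mathcal{U}\in\mathcal{O}(\bar{\mathcal{U}})$ and $\mathcal{V}\in[\mathbb{R}^{2}]^{\ell}$,
\begin{equation*}
j''(\mathcal{U})\mathcal{V}^{2}=\int_{\Omega}|\boldsymbol\theta|^{2}+\int_{\Omega}(\mathbf{y}-\mathbf{y}_{\Omega})\cdot\boldsymbol\psi+\eta\sum_{t\in\mathcal{D}}|\mathbf{v}_{t}|^{2},
\end{equation*}
where $\mathbf{y}=\mathcal{G}(\mathcal{U})$, $\boldsymbol\theta$ is the velocity component of $Q'(\mathcal{U})\mathcal{V}$ and $\boldsymbol\psi$ that of $Q''(\mathcal{U})\mathcal{V}^{2}$; by Theorem~\ref{thm:diff_prop}\,(iii)--(iv) these solve \eqref{eq:lin_NS_weighted} and \eqref{eq:second_der_sol_map_S} (with $\mathcal{V}_{1}=\mathcal{V}_{2}=\mathcal{V}$), respectively. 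Thus \eqref{eq:characterization_j''} reduces to proving $\int_{\Omega}(\mathbf{y}-\mathbf{y}_{\Omega})\cdot\boldsymbol\psi=2\int_{\Omega}\boldsymbol\theta\otimes\boldsymbol\theta:\nabla\mathbf{z}$.

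For this identity I would mimic the proof of Theorem~\ref{thm:first_opt_cond}: since $\boldsymbol\psi\in\mathbf{H}_{0}^{1}(\rho,\Omega)$, set $\mathbf{w}=\boldsymbol\psi$ in \eqref{eq:weak_adj_eq} (noting $\int_{\Omega}r\,\textnormal{div }\boldsymbol\psi=0$, taking $q=r$ in \eqref{eq:second_der_sol_map_S}); since $\mathbf{z}\in\mathbf{H}_{0}^{1}(\rho^{-1},\Omega)$, set $\mathbf{v}=\mathbf{z}$ in \eqref{eq:second_der_sol_map_S} (noting $\int_{\Omega}\gamma\,\textnormal{div }\mathbf{z}=0$, taking $s=\gamma$ in \eqref{eq:weak_adj_eq}). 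Because $\textnormal{div }\mathbf{y}=\textnormal{div }\mathbf{z}=\textnormal{div }\boldsymbol\psi=0$ and $\mathbf{z}\in\mathbf{C}(\bar\Omega)$ by Theorem~\ref{thm:reg_estim_adj}, the same integration by parts for the convective terms as in the proof of Theorem~\ref{thm:first_opt_cond} shows the left-hand sides of the two resulting identities coincide; equating the two right-hand sides yields $\int_{\Omega}(\mathbf{y}-\mathbf{y}_{\Omega})\cdot\boldsymbol\psi=2\int_{\Omega}\boldsymbol\theta\otimes\boldsymbol\theta:\nabla\mathbf{z}$, hence \eqref{eq:characterization_j''}.

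For \eqref{eq:j''_lipschitz}, I would apply \eqref{eq:characterization_j''} at $\mathcal{U}$ and at $\bar{\mathcal{U}}$; writing $(\bar{\boldsymbol\theta},\bar\xi)=Q'(\bar{\mathcal{U}})\mathcal{V}$ and letting $(\bar{\mathbf{z}},\bar r)$ be the adjoint state for $\bar{\mathbf{y}}$, the $\eta$-terms cancel and
\begin{equation*}
j''(\mathcal{U})\mathcal{V}^{2}-j''(\bar{\mathcal{U}})\mathcal{V}^{2}=\int_{\Omega}(\boldsymbol\theta-\bar{\boldsymbol\theta})\cdot(\boldsymbol\theta+\bar{\boldsymbol\theta})+2\int_{\Omega}\big(\boldsymbol\theta\otimes\boldsymbol\theta-\bar{\boldsymbol\theta}\otimes\bar{\boldsymbol\theta}\big):\nabla\mathbf{z}+2\int_{\Omega}\bar{\boldsymbol\theta}\otimes\bar{\boldsymbol\theta}:\nabla(\mathbf{z}-\bar{\mathbf{z}}).
\end{equation*}
The first two integrals I would bound by weighted H\"older inequalities (writing $1=\rho^{1/4}\rho^{1/4}\rho^{-1/2}$ and using $\int_{\Omega}\rho^{-1}<\infty$), the embedding $\mathbf{H}_{0}^{1}(\rho,\Omega)\hookrightarrow\mathbf{L}^{4}(\rho,\Omega)$, the estimate \eqref{eq:estimate_thetas} of Lemma~\ref{lemma:auxiliary_estimate} for $\|\nabla(\boldsymbol\theta-\bar{\boldsymbol\theta})\|_{\mathbf{L}^{2}(\rho,\Omega)}$, the bound \eqref{eq:stability_lin_NS_weighted} for $\|\nabla\boldsymbol\theta\|_{\mathbf{L}^{2}(\rho,\Omega)}+\|\nabla\bar{\boldsymbol\theta}\|_{\mathbf{L}^{2}(\rho,\Omega)}\lesssim\|\mathcal{V}\|_{[\mathbb{R}^{2}]^{\ell}}$, and \eqref{eq:stab_bound_adj} for $\|\nabla\mathbf{z}\|_{\mathbf{L}^{2}(\rho^{-1},\Omega)}$, all uniformly bounded on $\mathcal{O}(\bar{\mathcal{U}})$ (which may be taken bounded, so that $\|\nabla\mathbf{y}_{\mathcal{U}}\|_{\mathbf{L}^{p}(\Omega)}$ stays bounded there, cf.\ \eqref{eq:NS_p_stab}). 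Both integrals are then $\lesssim\|\mathcal{U}-\bar{\mathcal{U}}\|_{[\mathbb{R}^{2}]^{\ell}}\|\mathcal{V}\|_{[\mathbb{R}^{2}]^{\ell}}^{2}$.

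The main obstacle is the third integral, for which I need $\|\nabla(\mathbf{z}-\bar{\mathbf{z}})\|_{\mathbf{L}^{2}(\rho^{-1},\Omega)}\lesssim\|\mathcal{U}-\bar{\mathcal{U}}\|_{[\mathbb{R}^{2}]^{\ell}}$. Subtracting the adjoint problems \eqref{eq:weak_adj_eq} written for $\mathbf{y}_{\mathcal{U}}$ and for $\bar{\mathbf{y}}$ and splitting the convective terms, $(\mathbf{z}-\bar{\mathbf{z}},r-\bar r)$ solves an adjoint-type problem with the \emph{regular} coefficient $\bar{\mathbf{y}}$ and right-hand side
\begin{equation*}
\langle\mathbf{R},\mathbf{w}\rangle=\int_{\Omega}(\mathbf{y}_{\mathcal{U}}-\bar{\mathbf{y}})\cdot\mathbf{w}+\int_{\Omega}\big((\mathbf{y}_{\mathcal{U}}-\bar{\mathbf{y}})\cdot\nabla\big)\mathbf{z}\,\mathbf{w}-\int_{\Omega}\nabla(\mathbf{y}_{\mathcal{U}}-\bar{\mathbf{y}})^{\intercal}\mathbf{z}\cdot\mathbf{w};
\end{equation*}
the weighted well-posedness of this problem (Section~\ref{subsec:adjjoint_equation}, via Theorem~\ref{thm:wellposed_lin_NS_weighted}) gives $\|\nabla(\mathbf{z}-\bar{\mathbf{z}})\|_{\mathbf{L}^{2}(\rho^{-1},\Omega)}\lesssim\|\mathbf{R}\|_{\mathbf{H}_{0}^{1}(\rho,\Omega)'}$, and I would estimate $\|\mathbf{R}\|_{\mathbf{H}_{0}^{1}(\rho,\Omega)'}\lesssim\|\mathcal{U}-\bar{\mathcal{U}}\|_{[\mathbb{R}^{2}]^{\ell}}$ using Lemma~\ref{lemma:lips_prop}, the embeddings of Theorem~\ref{thm:embedding_result}, the bound $\mathbf{z}\in\mathbf{C}(\bar\Omega)\subset\mathbf{L}^{\infty}(\Omega)$ of Theorem~\ref{thm:reg_estim_adj}, weighted H\"older inequalities with $\int_{\Omega}\rho^{-1}<\infty$, and the uniform bounds on $\|\nabla\mathbf{y}_{\mathcal{U}}\|_{\mathbf{L}^{p}(\Omega)}$ and $\|\nabla\mathbf{z}\|_{\mathbf{L}^{2}(\rho^{-1},\Omega)}$; the last term, carrying $\nabla(\mathbf{y}_{\mathcal{U}}-\bar{\mathbf{y}})^{\intercal}\mathbf{z}$, is the delicate one and is precisely where the $\mathbf{L}^{\infty}(\Omega)$ regularity of $\mathbf{z}$ is essential. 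Finally, $|\int_{\Omega}\bar{\boldsymbol\theta}\otimes\bar{\boldsymbol\theta}:\nabla(\mathbf{z}-\bar{\mathbf{z}})|\lesssim\|\bar{\boldsymbol\theta}\|_{\mathbf{L}^{4}(\rho,\Omega)}^{2}\|\nabla(\mathbf{z}-\bar{\mathbf{z}})\|_{\mathbf{L}^{2}(\rho^{-1},\Omega)}\lesssim\|\mathcal{U}-\bar{\mathcal{U}}\|_{[\mathbb{R}^{2}]^{\ell}}\|\mathcal{V}\|_{[\mathbb{R}^{2}]^{\ell}}^{2}$ by the preceding bound and \eqref{eq:stability_lin_NS_weighted}, which completes the proof of \eqref{eq:j''_lipschitz}.
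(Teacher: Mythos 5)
Your treatment of the $C^{2}$ regularity and of the identity \eqref{eq:characterization_j''} coincides with the paper's: compute $j''$ by the chain rule, introduce $(\boldsymbol\psi,\gamma)=Q''(\mathcal{U})\mathcal{V}^{2}$, test \eqref{eq:second_der_sol_map_S} with $\mathbf{z}$ and \eqref{eq:weak_adj_eq} with $\boldsymbol\psi$, and integrate by parts using the divergence-free constraints. Your decomposition of $j''(\mathcal{U})\mathcal{V}^{2}-j''(\bar{\mathcal{U}})\mathcal{V}^{2}$ is algebraically equivalent to \eqref{eq:identity_ju_jhatu}, and the reduction of \eqref{eq:j''_lipschitz} to the bound $\|\nabla(\mathbf{z}-\bar{\mathbf{z}})\|_{\mathbf{L}^{2}(\rho^{-1},\Omega)}\lesssim\|\mathcal{U}-\bar{\mathcal{U}}\|_{[\mathbb{R}^{2}]^{\ell}}$ via the equation for the difference of adjoint states is also the paper's route (the paper keeps $\mathbf{y}_{\mathcal{U}}$ as coefficient and $\bar{\mathbf{z}}$ on the right-hand side, you do the symmetric choice; both work, yours additionally requires the bounds on the adjoint state to be uniform over $\mathcal{O}(\bar{\mathcal{U}})$, which is fine).

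There is, however, a genuine gap in your estimate of the term $\int_{\Omega}\nabla(\mathbf{y}_{\mathcal{U}}-\bar{\mathbf{y}})^{\intercal}\mathbf{z}\cdot\mathbf{w}$, which you correctly single out as the delicate one. The test function $\mathbf{w}$ only belongs to $\mathbf{H}_{0}^{1}(\rho,\Omega)$, hence to $\mathbf{L}^{2+\varepsilon}(\Omega)$ for the small $\varepsilon$ of Theorem \ref{thm:embedding_result}; so even with $\mathbf{z}\in\mathbf{L}^{\infty}(\Omega)$ in hand, H\"older forces you to measure $\nabla(\mathbf{y}_{\mathcal{U}}-\bar{\mathbf{y}})$ in $\mathbf{L}^{q}(\Omega)$ with $q\geq(2+\varepsilon)/(1+\varepsilon)$, i.e., $q$ close to $2$ from below, \emph{and you need this quantity to be Lipschitz in} $\mathcal{U}$. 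None of the tools you list delivers that: Lemma \ref{lemma:lips_prop} is a Lipschitz bound in $\mathbf{L}^{2}(\rho,\Omega)$ only, and transferring it to an unweighted norm via $\mathbf{H}_{0}^{1}(\rho,\Omega)\hookrightarrow\mathbf{W}_{0}^{1,\kappa}(\Omega)$ reaches only $\kappa<4/(2+\alpha)$ (the range in which $\rho^{-\kappa/(2-\kappa)}$ is integrable), which degenerates to $1$ as $\alpha\uparrow 2$; the uniform bound on $\|\nabla\mathbf{y}_{\mathcal{U}}\|_{\mathbf{L}^{p}(\Omega)}$ is not a Lipschitz bound; and the alternative pairing of $\rho^{1/2}\nabla(\mathbf{y}_{\mathcal{U}}-\bar{\mathbf{y}})$ against $\rho^{-1/2}|\mathbf{z}||\mathbf{w}|$ would require $\|\mathbf{w}\|_{\mathbf{L}^{2}(\rho^{-1},\Omega)}\lesssim\|\nabla\mathbf{w}\|_{\mathbf{L}^{2}(\rho,\Omega)}$, which is not available for general $\alpha\in(0,2)$. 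The missing ingredient is precisely the Lipschitz property of the control-to-state map in $\mathbf{W}_{0}^{1,p}(\Omega)$ for $p\in(4/3,2)$ close to $2$, which the paper imports at this exact point from \cite[Lemma 4.4]{MR3936891} (cf.\ Remark \ref{rem:reg_linearized_vel}); with that estimate, $\|\nabla(\mathbf{y}_{\mathcal{U}}-\bar{\mathbf{y}})\|_{\mathbf{L}^{p}(\Omega)}\lesssim\|\mathcal{U}-\bar{\mathcal{U}}\|_{[\mathbb{R}^{2}]^{\ell}}$, your argument closes, and the $\mathbf{L}^{\infty}$ bound on the adjoint velocity is in fact not needed, since $\bar{\mathbf{z}}\in\mathbf{L}^{\mu}(\Omega)$ for every finite $\mu$ already suffices.
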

\begin{proof}
\EO{Since Theorem \ref{thm:diff_prop} guarantees that $Q$ is second order Fr\'echet differentiable, it is immediate that $\mathcal{G}$, defined in \eqref{eq:mathcal_G}, is second order Fr\'echet differentiable as a map from $[\mathbb{R}^{2}]^{\ell}$ into $\mathbf{H}_0^1(\rho,\Omega)$. Consequently, $j$ is of class $C^2$.} 

\EO{We now derive the identity \eqref{eq:characterization_j''}. To accomplish this task, we begin with a basic computation, which reveals that, for $\mathcal{U}\in \mathcal{O}(\bar{\mathcal{U}})$ and $\mathcal{V} \in[\mathbb{R}^2]^{\ell}$, we have
\begin{equation}\label{eq:charac_j2_prev}
j''(\mathcal{U})\mathcal{V}^{2}
=
\int_{\Omega} |\mathcal{G}'(\mathcal{U})\mathcal{V}|^{2} + \int_{\Omega}(\mathcal{G}(\mathcal{U})-\mathbf{y}_{\Omega})\cdot \mathcal{G}''(\mathcal{U})\mathcal{V}^{2} + \eta\sum_{t\in\mathcal{D}}|\mathbf{v}_{t}|^{2}.
\end{equation}
Define $(\boldsymbol \psi, \gamma) := Q''(\mathcal{U})\mathcal{V}^{2}$, i.e., $(\boldsymbol \psi, \gamma)  \in \mathbf{H}_{0}^{1}(\rho,\Omega) \times L^{2}(\rho,\Omega)/\mathbb{R}$ corresponds to the unique solution to \eqref{eq:second_der_sol_map_S} with both $\boldsymbol\theta_{\mathcal{V}_{1}}$ and $\boldsymbol\theta_{\mathcal{V}_{2}}$ being replaced by $\boldsymbol\theta$ and $\mathbf{y}_{\mathcal{U}} =  \mathcal{G}(\mathcal{U})$. Notice that $\boldsymbol \psi = \mathcal{G}''(\mathcal{U})\mathcal{V}^{2}$. Setting $\mathbf{v} = \mathbf{z}$ in the first equation of the problem that $(\boldsymbol \psi, \gamma)$ solves (c.f. \eqref{eq:second_der_sol_map_S} with  $\boldsymbol\theta_{\mathcal{V}_{1}} = \boldsymbol\theta_{\mathcal{V}_{2}} =\boldsymbol\theta$), we arrive at
\[
\int_{\Omega} ( [\nu \nabla \boldsymbol{\psi} - 
\mathbf{y}_{\mathcal{U}} \otimes \boldsymbol{\psi}
-
\boldsymbol{\psi} \otimes \mathbf{y}_{\mathcal{U}}]: \nabla\mathbf{z}
- 
\gamma\textnormal{div }\mathbf{z}) = 2\int_{\Omega}\boldsymbol{\theta} \otimes \boldsymbol{\theta} : \nabla \mathbf{z}.
\]
Here, $(\mathbf{z},r) \in \mathbf{H}_{0}^{1}(\rho^{-1},\Omega) \times L^{2}(\rho^{-1},\Omega)/\mathbb{R}$ corresponds to the unique solution to the adjoint problem \eqref{eq:weak_adj_eq}. Similarly, we set $\mathbf{w} = \boldsymbol\psi$ in the first equation of the adjoint problem \eqref{eq:weak_adj_eq}. This yields
\[
\int_{\Omega} ( \nu \nabla \mathbf{z}:\nabla \mathbf{\boldsymbol\psi}
 - 
(\mathbf{y}_{\mathcal{U}} \cdot \nabla) \mathbf{z} \boldsymbol{\psi}
+
\nabla \mathbf{y}_{\mathcal{U}}^{\intercal}\mathbf{z}\cdot \boldsymbol{\psi} - r\text{div }\boldsymbol{\psi})
= 
\int_{\Omega}(\mathbf{y}_{\mathcal{U}} - \mathbf{y}_{\Omega}) \cdot \boldsymbol{\psi}.
 \]
We now resort to an integration by parts argument based on the  fact that $\text{div} \ \boldsymbol\psi = \text{div} \ \mathbf{y}_{\mathcal{U}} = \text{div} \ \mathbf{z} = 0$ to obtain \eqref{eq:characterization_j''}.}

\EO{Let us proceed with the task of deriving \eqref{eq:j''_lipschitz}. To accomplish this task, we define $(\bar{\boldsymbol\theta},\bar{\xi}) := Q'(\bar{\mathcal{U}})\mathcal{V}$ and notice that $(\boldsymbol\theta,\xi)$ and $(\bar{\boldsymbol\theta},\bar{\xi})$ solve problem \eqref{eq:lin_NS_weighted} with $\boldsymbol\Phi = \mathcal{G}(\mathcal{U})$ and $\boldsymbol\Phi = \mathcal{G}(\bar{\mathcal{U}})$, respectively, and $\mathbf{g} = \sum_{t \in \mathcal{D}} \mathbf{v}_{t}\delta_{t} $. In view of the derived identity \eqref{eq:characterization_j''}, we write the following equality:
\begin{multline}
\label{eq:identity_ju_jhatu}
j''(\mathcal{U})\mathcal{V}^2-j''(\bar{\mathcal{U}})\mathcal{V}^2
=
\int_{\Omega}(\boldsymbol\theta - \bar{\boldsymbol\theta}) \cdot (\boldsymbol\theta + \bar{\boldsymbol\theta}) + 2\left[\int_{\Omega} \boldsymbol\theta\otimes (\boldsymbol\theta - \bar{\boldsymbol\theta}) : \nabla \mathbf{z}\right.
\\
\left. + \int_{\Omega} \boldsymbol\theta\otimes \bar{\boldsymbol\theta} : \nabla (\mathbf{z} - \bar{\mathbf{z}}) + \int_{\Omega} (\boldsymbol\theta - \bar{\boldsymbol\theta})\otimes \bar{\boldsymbol\theta} : \nabla \bar{\mathbf{z}}\right].
\end{multline}
Here, $(\bar{\mathbf{z}},\bar{r})$ denotes the unique solution to problem \eqref{eq:weak_adj_eq} with $\mathbf{y}_{\mathcal{U}} $ being replaced by $\bar{\mathbf{y}} = \mathcal{G}(\bar{\mathcal{U}})$. Invoke H\"older's inequality and the Sobolev embeddings $\mathbf{H}_0^1(\rho,\Omega)\hookrightarrow \mathbf{L}^4(\rho,\Omega)$ (cf.~\cite[Theorem 1.3]{Fabes_et_al1982}) and $\mathbf{H}_0^1(\rho,\Omega)\hookrightarrow \mathbf{L}^2(\Omega)$ (cf.~Theorem \ref{thm:embedding_result}) to arrive at
\begin{multline*}
|j''(\mathcal{U})\mathcal{V}^2-j''(\bar{\mathcal{U}})\mathcal{V}^2|
\lesssim
\| \nabla \boldsymbol\theta\|_{\mathbf{L}^{2}(\rho,\Omega)}\| \nabla \bar{\boldsymbol\theta}\|_{\mathbf{L}^{2}(\rho,\Omega)}\|\nabla(\mathbf{z} - \bar{\mathbf{z}})\|_{\mathbf{L}^2(\rho^{-1},\Omega)}
\\
+ \Lambda \|\nabla(\boldsymbol\theta - \bar{\boldsymbol\theta})\|_{\mathbf{L}^2(\rho,\Omega)}
\big(\| \nabla \boldsymbol\theta\|_{\mathbf{L}^2(\rho,\Omega)} + \| \nabla \bar{\boldsymbol\theta}\|_{\mathbf{L}^2(\rho,\Omega)} \big).
\end{multline*}
where $\Lambda:= 1 + \|\nabla \mathbf{z}\|_{\mathbf{L}^2(\rho^{-1},\Omega)} +  \|\nabla \bar{\mathbf{z}}\|_{\mathbf{L}^2(\rho^{-1},\Omega)}$. This bound combined with the stability estimate \eqref{eq:stability_lin_NS_weighted}, the auxiliary estimate \eqref{eq:estimate_thetas}, and the boundedness of $\bar{\mathbf{z}}, \mathbf{z}$ in $\mathbf{H}_0^1(\rho^{-1},\Omega)$, which follows from \eqref{eq:stab_bound_adj}, yield}
\begin{equation*}
|j''(\mathcal{U})\mathcal{V}^2-j''(\bar{\mathcal{U}})\mathcal{V}^2|
\lesssim \|\mathcal{U} - \bar{\mathcal{U}}\|_{[\mathbb{R}^2]^{\ell}} \|\mathcal{V}\|_{[\mathbb{R}^2]^{\ell}}^2 + \|\nabla(\mathbf{z} - \bar{\mathbf{z}})\|_{\mathbf{L}^2(\rho^{-1},\Omega)}\|\mathcal{V}\|_{[\mathbb{R}^2]^{\ell}}^2.
\end{equation*}
Therefore, it suffices to bound $\|\nabla(\mathbf{z} -\bar{\mathbf{z}})\|_{\mathbf{L}^2(\EO{\rho^{-1}},\Omega)}$. To accomplish this goal, we notice that $(\mathbf{z} - \bar{\mathbf{z}}, r - \bar{r})\in \mathbf{H}_0^1(\EO{\rho^{-1}},\Omega)\times L^2(\EO{\rho^{-1}},\Omega)/\mathbb{R}$ solves
\begin{multline}\label{eq:z_hatz}
\displaystyle\int_{\Omega} 
\left(\nu \nabla (\mathbf{z} - \bar{\mathbf{z}}) : \nabla \mathbf{w} 
-
(\mathbf{y}_\mathcal{U}\cdot \nabla)(\mathbf{z} - \bar{\mathbf{z}})\mathbf{w} 
+ 
\nabla\mathbf{y}_\mathcal{U}^{\intercal}(\mathbf{z} - \bar{\mathbf{z}})\cdot \mathbf{w} 
- 
(r - \bar{r})\text{div }\mathbf{w}\right) \\
= \displaystyle\int_{\Omega}(\mathbf{y}_\mathcal{U} - \EO{\bar{\mathbf{y}}})\cdot\mathbf{w} + \int_{\Omega}([\mathbf{y}_\mathcal{U} - \EO{\bar{\mathbf{y}}}]\cdot \nabla) \bar{\mathbf{z}}\mathbf{w} + \int_{\Omega}[\nabla(\EO{\bar{\mathbf{y}}} - \mathbf{y}_\mathcal{U})]^{\intercal}\bar{\mathbf{z}}\cdot \mathbf{w},
\\
\displaystyle \int_{\Omega}  s\text{div}(\mathbf{z} - \bar{\mathbf{z}})= 0, 
\end{multline}
for all $(\mathbf{w},s) \in \mathbf{H}_{0}^{1}(\EO{\rho},\Omega) \times L^{2}(\EO{\rho},\Omega)/\mathbb{R}$. \EO{We now bound the $\mathbf{H}_{0}^{1}(\rho,\Omega)'$-norm of the right hand-side of the first equation of \eqref{eq:z_hatz}. We begin by noticing that H\"older's inequality combined with the embedding $\mathbf{H}_{0}^{1}(\rho,\Omega) \hookrightarrow \mathbf{L}^{4}(\rho,\Omega)$ yield
\[
\|([\mathbf{y}_\mathcal{U} - \bar{\mathbf{y}}]\cdot \nabla) \bar{\mathbf{z}}\|_{\mathbf{H}_{0}^{1}(\rho,\Omega)'} \lesssim \|\nabla(\mathbf{y}_\mathcal{U} - \EO{\bar{\mathbf{y}}})\|_{\mathbf{L}^{2}(\rho,\Omega)}\|\nabla\bar{\mathbf{z}}\|_{\mathbf{L}^{2}(\rho^{-1},\Omega)}.
\]
Similarly, by exploiting the fact that $\EO{\bar{\mathbf{y}}, \mathbf{y}_{\mathcal{U}}} \in \mathbf{W}_0^{1,p}(\Omega)$ for every $p \in (4/3 - \epsilon, 2)$, where $\epsilon = \epsilon(\Omega) > 0$ (cf. estimate \eqref{eq:NS_p_stab}),  we obtain
\[
\|[\nabla(\mathbf{y}_\mathcal{U} - \bar{\mathbf{y}})]^{\intercal} \bar{\mathbf{z}}\|_{\mathbf{H}_{0}^{1}(\rho,\Omega)'} 
\lesssim
\sup_{\mathbf{v} \in \mathbf{H}_{0}^{1}(\rho,\Omega) }
% \|\rho^{-1/4}\|_{\mathbf{L}^{\zeta}(\Omega)}
\frac{
 \|\nabla(\mathbf{y}_\mathcal{U} - \bar{\mathbf{y}})\|_{\mathbf{L}^{p}(\Omega)}
 \|\bar{\mathbf{z}}\|_{\mathbf{L}^{\mu}(\Omega)}
  \|\mathbf{v}\|_{\mathbf{L}^{\varrho}(\Omega)}}
  {\| \nabla\mathbf{v} \|_{\mathbf{L}^{2}(\rho,\Omega)}}, 
\]
where $p^{-1} + \mu^{-1} + \varrho^{-1} = 1$. In view of the embeddings $\mathbf{H}_{0}^{1}(\rho^{-1},\Omega) \hookrightarrow \mathbf{H}_{0}^{1}(\Omega)  \hookrightarrow  \mathbf{L}^{\beta}(\Omega)$ for every $\beta < \infty$, we can thus set $\varrho = 2 + \varepsilon$, with $\varepsilon$ being dictated by Theorem \ref{thm:embedding_result} and utilize the embedding $ \mathbf{H}_0^1(\EO{\rho},\Omega) \hookrightarrow \mathbf{L}^{2 + \varepsilon}(\Omega)$ to conclude that 
\[
\|[\nabla(\mathbf{y}_\mathcal{U} - \bar{\mathbf{y}})]^{\intercal} \bar{\mathbf{z}}\|_{\mathbf{H}_{0}^{1}(\rho,\Omega)'} 
\lesssim
 \|\nabla(\mathbf{y}_\mathcal{U} - \bar{\mathbf{y}})\|_{\mathbf{L}^{p}(\Omega)}
 \|\nabla \bar{\mathbf{z}}\|_{\mathbf{L}^{2}(\rho^{-1},\Omega)}. 
\]
Having controlled the $\mathbf{H}_{0}^{1}(\rho,\Omega)'$-norm of the right hand-side of the first equation of \eqref{eq:z_hatz}, we invoke the weighted stability estimate \eqref{eq:stab_bound_adj} twice to obtain
\begin{multline*}
\| \nabla (\mathbf{z} - \bar{\mathbf{z}}) \|_{\mathbf{L}^2(\rho^{-1},\Omega)}
\lesssim  
\left( \| \nabla (\mathbf{y}_{\mathcal{U}} - \EO{\bar{\mathbf{y}}}) \|_{\mathbf{L}^2(\rho,\Omega)}
+
 \|\nabla(\mathbf{y}_\mathcal{U} - \bar{\mathbf{y}})\|_{\mathbf{L}^{p}(\Omega)}
 \right)
 \\
 \cdot
 \|\nabla \bar{\mathbf{z}}\|_{\mathbf{L}^{2}(\rho^{-1},\Omega)}
 \lesssim 
 \left( \| \nabla (\mathbf{y}_{\mathcal{U}} - \EO{\bar{\mathbf{y}}}) \|_{\mathbf{L}^2(\rho,\Omega)}
+
 \|\nabla(\mathbf{y}_\mathcal{U} - \bar{\mathbf{y}})\|_{\mathbf{L}^{p}(\Omega)}
 \right) \| \bar{\mathbf{y}} - \mathbf{y}_{\Omega}  \|_{\mathbf{L}^2(\Omega)}.
\end{multline*}
%The desired estimate \eqref{eq:j''_lipschitz} thus follows from 
We now utilize the Lipschitz property of Lemma \ref{lemma:lips_prop} and the one in \cite[Lemma 4.4]{MR3936891}, upon further restricting the neighborhood $\mathcal{O}(\bar{\mathcal{U}})$ if necessary, to obtain the bound $\| \nabla (\mathbf{z} - \bar{\mathbf{z}}) \|_{\mathbf{L}^2(\rho^{-1},\Omega)} \lesssim \|\mathcal{U} - \bar{\mathcal{U}}\|_{[\mathbb{R}^2]^{\ell}}$. This concludes the proof.} \qed
\end{proof}

\subsubsection{Second Order Necessary \EO{and Sufficient} Optimality Conditions}

Before \EO{presenting necessary and sufficient second order optimality conditions, we introduce a few ingredients. Let us define}
\begin{equation}\label{eq:derivative_j}
\boldsymbol\Psi:=(\boldsymbol\Psi_{1},\ldots,\boldsymbol\Psi_{\ell})\in [\mathbb{R}^2]^{\ell}, 
\qquad
\boldsymbol\Psi_{t}:=\bar{\mathbf{z}}(t) + \eta\bar{\mathbf{u}}_{t}, 
\qquad 
 t\in \mathcal{D}.
\end{equation}
Let $s \in \mathcal{D}$ and $\mathcal{U} \in \mathbb{U}_{ad}$ be such that $\mathbf{u}_t = \bar{\mathbf{u}}_{t}$ for $t \in \mathcal{D} \setminus \{s\}$. Set $\mathcal{U}$ into the variational inequality \eqref{eq:variational_inequality}. This yields
\begin{equation}
0 \leq (\bar{\mathbf{z}}(s) + \eta \bar{\mathbf{u}}_s) \cdot (\mathbf{u}_s - \bar{\mathbf{u}}_s)  
= 
\boldsymbol\Psi_{s} \cdot (\mathbf{u}_s - \bar{\mathbf{u}}_s).
\label{eq:var_ineq_aux}
\end{equation}
Let $i, j \in \{1,2\}$ be such that $i \neq j$. Set $(\mathbf{u}_s)_{i} = (\bar{\mathbf{u}}_s)_i$. If $ (\bar{\mathbf{u}}_s)_j = (\mathbf{a}_s)_j$, then inequality \eqref{eq:var_ineq_aux} reveals that
\[
0 \leq \boldsymbol\Psi_{s} \cdot (\mathbf{u}_s - \bar{\mathbf{u}}_s) = (\boldsymbol\Psi_{s})_j [ (\mathbf{u}_s)_j -  (\mathbf{a}_s)_j ] \implies (\boldsymbol\Psi_{s})_j  \geq 0.
\]
Similarly, 
\begin{itemize}
\item if $(\mathbf{a}_{s})_{j} < (\bar{\mathbf{u}}_{s})_{j} < (\mathbf{b}_{s})_{j}$, then
$(\boldsymbol\Psi_{s})_{j} = 0$, and
\item if $(\bar{\mathbf{u}}_{s})_{j} = (\mathbf{b}_{s})_{j}$, then
$(\boldsymbol\Psi_{s})_{j} \leq 0$.
\end{itemize}
\EO{Let us also} introduce the cone of critical directions at $\bar{\mathcal{U}} \in \mathbb{U}_{ad}$:
\begin{equation*}
\mathbf{C}_{\bar{\mathcal{U}}}:=\{\mathcal{V}=(\mathbf{v}_{1},\ldots,\mathbf{v}_{\ell})\in [\mathbb{R}^2]^{\ell}\, \text{ that satisfies } \eqref{eq:sign_cond} \text{ and } \eqref{eq:critical_cone_charac_2} \},
\end{equation*}
where, for $t\in\mathcal{D}$ and $i\in\{1,2\}$, conditions \eqref{eq:sign_cond} and \eqref{eq:critical_cone_charac_2} read as \EO{follows:
\begin{equation}\label{eq:sign_cond}
(\mathbf{v}_{t})_{i}
\begin{cases}
\geq 0  \quad \text{ if } (\bar{\mathbf{u}}_{t})_{i} = (\mathbf{a}_{t})_{i},
%\text{ and } (\boldsymbol\Psi_{t})_{i} = 0, 
\\
\leq 0  \quad \text{ if } (\bar{\mathbf{u}}_{t})_{i} = (\mathbf{b}_{t})_{i}, 
%\text{ and } (\boldsymbol\Psi_{t})_{i} = 0,
\end{cases}
\end{equation}
and
\begin{equation}\label{eq:critical_cone_charac_2}
(\boldsymbol\Psi_{t})_i\neq 0 \implies (\mathbf{v}_{t})_{i}=0;
\end{equation}
compare with \cite[(3.16)]{MR3311948}.}
%\subsubsection{Second Order Necessary Optimality Conditions}
%\DQ{At this point it is important to notice that the }

%We now present a second order necessary optimality condition. 

\EO{As stated in \cite[Section 3.3]{MR3311948}, the following result follows from the standard Karush--Kuhn--Tucker theory of mathematical optimization in finite-dimensional spaces (see, for instance, \cite[Theorem 3.8]{MR3311948} and \cite[Section 6.3]{MR2012832}) on the basis of the results derived in Theorem \ref{thm:property_j''}.} 

\begin{theorem}[Second Order Necessary \EO{and Sufficient} Optimality Conditions]\label{thm:second_necess_suff_opt_cond} 
\EO{If $\bar{\mathcal{U}} \in \mathbb{U}_{ad}$ is a local minimum for problem \eqref{eq:weak_cost}--\eqref{eq:weak_st_eq}, then $j''(\bar{\mathcal{U}})\mathcal{V}^2 \geq 0$ for all $\mathcal{V}\in \mathbf{C}_{\bar{\mathcal{U}}}$.} \EO{Conversely, if $\bar{\mathcal{U}} \in \mathbb{U}_{ad}$ satisfies the variational inequality \eqref{eq:variational_inequality} and the second order sufficient condition
\begin{equation}
\label{eq:second_order_sufficient}
j''(\bar{\mathcal{U}})\mathcal{V}^2 > 0 \quad \forall \mathcal{V}\in \mathbf{C}_{\bar{\mathcal{U}}} \setminus \{ \mathbf{0} \},
\end{equation}
then there exists $\mu>0$ and $\sigma >0$ such that
\begin{equation}\label{eq:quad_growth_new}
\displaystyle j(\mathcal{U})\geq j(\bar{\mathcal{U}})+\frac{\mu}{2}\|\mathcal{U}-\bar{\mathcal{U}}\|_{[\mathbb{R}^2]^\ell}^2
\qquad \forall \mathcal{U}\in \mathbb{U}_{ad}: \|\mathcal{U}-\bar{\mathcal{U}}\|_{[\mathbb{R}^2]^\ell}\leq \sigma.
\end{equation}
In particular, $\bar{\mathcal{U}} $ is a strict local solution of \eqref{eq:weak_cost}--\eqref{eq:weak_st_eq}.
}
\end{theorem}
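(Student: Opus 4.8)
The plan is to treat \eqref{eq:weak_cost}--\eqref{eq:weak_st_eq}, locally around $\bar{\mathcal{U}}$, as the finite-dimensional problem of minimizing the reduced cost functional $j$ over the polyhedral (box) set $\mathbb{U}_{ad}\subset[\mathbb{R}^{2}]^{\ell}$. Theorem \ref{thm:property_j''} guarantees that $j$ is of class $C^2$ on $\mathcal{O}(\bar{\mathcal{U}})$, so both assertions then follow from the classical second order theory of mathematical optimization in finite dimensions (\cite[Section 3.3, Theorem 3.8]{MR3311948}, \cite[Section 6.3]{MR2012832}); the only nonstandard ingredient is the local Lipschitz estimate \eqref{eq:j''_lipschitz}, which will play the role usually played by the continuity of $j''$. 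Throughout, I would use the identity $j'(\bar{\mathcal{U}})\mathcal{V}=\boldsymbol\Psi\cdot\mathcal{V}=\sum_{t\in\mathcal{D}}\boldsymbol\Psi_{t}\cdot\mathbf{v}_{t}$, valid for every $\mathcal{V}=(\mathbf{v}_{1},\ldots,\mathbf{v}_{\ell})\in[\mathbb{R}^{2}]^{\ell}$, which is precisely the adjoint-calculus identity established in the proof of Theorem \ref{thm:first_opt_cond} together with the definition \eqref{eq:derivative_j}.

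\emph{Necessity.} I would fix $\mathcal{V}\in\mathbf{C}_{\bar{\mathcal{U}}}$ and observe that the sign conditions \eqref{eq:sign_cond} are exactly the description of the tangent cone to the box $\mathbb{U}_{ad}$ at $\bar{\mathcal{U}}$, so $\bar{\mathcal{U}}+t\mathcal{V}\in\mathbb{U}_{ad}$ for all sufficiently small $t>0$. A second order Taylor expansion of $j$ at $\bar{\mathcal{U}}$ then reads
\[
j(\bar{\mathcal{U}}+t\mathcal{V}) = j(\bar{\mathcal{U}})+t\,j'(\bar{\mathcal{U}})\mathcal{V}+\tfrac{t^{2}}{2}\,j''(\bar{\mathcal{U}})\mathcal{V}^{2}+o(t^{2}),\qquad t\downarrow 0,
\]
and the first order term vanishes because, by \eqref{eq:critical_cone_charac_2}, for every $t\in\mathcal{D}$ and $i\in\{1,2\}$ either $(\boldsymbol\Psi_{t})_{i}=0$ or $(\mathbf{v}_{t})_{i}=0$. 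Local optimality gives $j(\bar{\mathcal{U}}+t\mathcal{V})\geq j(\bar{\mathcal{U}})$ for small $t$; dividing by $t^{2}$ and letting $t\downarrow 0$ yields $j''(\bar{\mathcal{U}})\mathcal{V}^{2}\geq 0$.

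\emph{Sufficiency.} I would argue by contradiction: if \eqref{eq:quad_growth_new} failed for every $\mu,\sigma>0$, there would be $\mathcal{U}_{k}\in\mathbb{U}_{ad}$, $\mathcal{U}_{k}\neq\bar{\mathcal{U}}$, $\mathcal{U}_{k}\to\bar{\mathcal{U}}$, with $j(\mathcal{U}_{k})<j(\bar{\mathcal{U}})+\tfrac{1}{2k}\rho_{k}^{2}$, where $\rho_{k}:=\|\mathcal{U}_{k}-\bar{\mathcal{U}}\|_{[\mathbb{R}^{2}]^{\ell}}$. Setting $\mathcal{V}_{k}:=\rho_{k}^{-1}(\mathcal{U}_{k}-\bar{\mathcal{U}})$ and passing to a subsequence, $\mathcal{V}_{k}\to\mathcal{V}$ with $\|\mathcal{V}\|_{[\mathbb{R}^{2}]^{\ell}}=1$; feasibility $\mathcal{U}_{k}\in\mathbb{U}_{ad}$ forces the $\mathcal{V}_{k}$, hence $\mathcal{V}$, to satisfy \eqref{eq:sign_cond}. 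A first order expansion together with the variational inequality $j'(\bar{\mathcal{U}})\mathcal{V}_{k}\geq 0$ shows $j'(\bar{\mathcal{U}})\mathcal{V}_{k}\to 0$, so $\boldsymbol\Psi\cdot\mathcal{V}=0$; since the sign conditions on $\mathcal{V}$ together with the componentwise sign information on $\boldsymbol\Psi$ derived right before the theorem make $\boldsymbol\Psi\cdot\mathcal{V}$ a sum of nonnegative terms, each term vanishes, which is exactly \eqref{eq:critical_cone_charac_2}; thus $\mathcal{V}\in\mathbf{C}_{\bar{\mathcal{U}}}\setminus\{\mathbf{0}\}$. Next, a second order mean value expansion produces $\hat{\mathcal{U}}_{k}$ on the segment $[\bar{\mathcal{U}},\mathcal{U}_{k}]$ (which, for $k$ large, lies in the convex neighborhood $\mathcal{O}(\bar{\mathcal{U}})$) with
\[
j(\mathcal{U}_{k}) = j(\bar{\mathcal{U}})+\rho_{k}\,j'(\bar{\mathcal{U}})\mathcal{V}_{k}+\tfrac{\rho_{k}^{2}}{2}\,j''(\hat{\mathcal{U}}_{k})\mathcal{V}_{k}^{2};
\]
using $j'(\bar{\mathcal{U}})\mathcal{V}_{k}\geq 0$ and $j(\mathcal{U}_{k})<j(\bar{\mathcal{U}})+\tfrac{1}{2k}\rho_{k}^{2}$ gives $j''(\hat{\mathcal{U}}_{k})\mathcal{V}_{k}^{2}<\tfrac{1}{k}$. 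Finally, the Lipschitz bound \eqref{eq:j''_lipschitz} gives $|j''(\hat{\mathcal{U}}_{k})\mathcal{V}_{k}^{2}-j''(\bar{\mathcal{U}})\mathcal{V}_{k}^{2}|\lesssim\|\hat{\mathcal{U}}_{k}-\bar{\mathcal{U}}\|_{[\mathbb{R}^{2}]^{\ell}}\to 0$, and since $j''(\bar{\mathcal{U}})$ is a fixed continuous quadratic form on $[\mathbb{R}^{2}]^{\ell}$, $j''(\bar{\mathcal{U}})\mathcal{V}_{k}^{2}\to j''(\bar{\mathcal{U}})\mathcal{V}^{2}$; passing to the limit yields $j''(\bar{\mathcal{U}})\mathcal{V}^{2}\leq 0$, contradicting \eqref{eq:second_order_sufficient}. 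Therefore \eqref{eq:quad_growth_new} holds, and since the state is uniquely determined by $\mathcal{U}$ through $Q$ in $\mathcal{O}(\bar{\mathbf{y}})\times\mathcal{O}(\bar{p})$, this makes $(\bar{\mathbf{y}},\bar{\mathcal{U}})$ a strict local solution of \eqref{eq:weak_cost}--\eqref{eq:weak_st_eq}. The hard part is precisely this last argument: the passage from $j''(\hat{\mathcal{U}}_{k})$ to $j''(\bar{\mathcal{U}})$ is what requires the nonstandard Lipschitz estimate \eqref{eq:j''_lipschitz}, which in turn rests on the weighted stability bounds of Theorem \ref{thm:wellposed_lin_NS_weighted} and the adjoint regularity of Theorem \ref{thm:reg_estim_adj}.
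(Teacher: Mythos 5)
Your proof is correct and follows exactly the route the paper takes: the paper gives no detailed argument but simply invokes the standard finite-dimensional Karush--Kuhn--Tucker second order theory (citing \cite[Theorem 3.8]{MR3311948} and \cite[Section 6.3]{MR2012832}) on the basis of the $C^2$ regularity and Lipschitz estimate \eqref{eq:j''_lipschitz} from Theorem \ref{thm:property_j''}, and your writeup is a correct, fully detailed instantiation of precisely that argument (feasible-direction Taylor expansion for necessity; normalized-sequence contradiction with \eqref{eq:j''_lipschitz} replacing continuity of $j''$ for sufficiency).
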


\EO{To present the following result, we introduce, for $\tau > 0$, the cone 
\begin{equation*}
\mathbf{C}_{\bar{\mathcal{U}}}^{\tau}:=\{\mathcal{V}=(\mathbf{v}_{1},\ldots,\mathbf{v}_{\ell})\in [\mathbb{R}^2]^{\ell} \text{ that satisfies } \eqref{eq:sign_cond} \text{ and } \eqref{eq:v_i_tau} \},
\end{equation*}
where, for $t \in \mathcal{D}$ and $i \in \{ 1, 2 \}$, condition \eqref{eq:v_i_tau} reads as follows:}
%\begin{equation}\label{eq:sign_cond_tau}
%(\mathbf{v}_{t})_{i}
%\begin{cases}
%\geq 0  \quad \text{ if } (\bar{\mathbf{u}}_{t})_{i} = (\mathbf{a}_{t})_{i} \text{ and } |(\boldsymbol\Psi_{t})_{i}| \leq \tau, \\
%\leq 0  \quad \text{ if } (\bar{\mathbf{u}}_{t})_{i} = (\mathbf{b}_{t})_{i} \text{ and } |(\boldsymbol\Psi_{t})_{i}| \leq \tau,
%\end{cases}
%\end{equation}
%and
\begin{equation}\label{eq:v_i_tau}
 |(\boldsymbol\Psi_{t})_i| > \tau \implies (\mathbf{v}_{t})_{i} = 0.
\end{equation}

\EO{The following result is immediate in view of our finite dimensional setting.}

\begin{theorem}[\EO{Equivalence}]\label{thm:equivalent_opt_cond}
\EO{Let $(\bar{\mathbf{y}},\bar{p})$, $(\bar{\mathbf{z}},\bar{r})$ and $\bar{\mathcal{U}}$ satisfy 
the first order optimality conditions \eqref{eq:weak_st_eq}, \eqref{eq:weak_adj_eq}, and \eqref{eq:variational_inequality}. Then, \eqref{eq:second_order_sufficient} is equivalent to
%the following statements are equivalent:
%\begin{equation}\label{eq:second_order_2_2}
%j''(\bar{\mathcal{U}})\mathcal{V}^2 > 0 \quad \forall \mathcal{V} \in \mathbf{C}_{\bar{\mathcal{U}}}\setminus \{\mathbf{0}\},
%\end{equation}
%and
}
\begin{equation}\label{eq:second_order_equivalent}
\exists \kappa, \tau >0: \quad j''(\bar{\mathcal{U}})\mathcal{V}^2  \geq \kappa \|\mathcal{V}\|_{[\mathbb{R}^2]^{\ell}}^2 \quad \forall \mathcal{V} \in \mathbf{C}_{\bar{\mathcal{U}}}^\tau.
\end{equation}
\end{theorem}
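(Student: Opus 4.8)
The plan is to prove the two implications separately, using throughout that everything takes place in the finite-dimensional space $[\mathbb{R}^{2}]^{\ell}$ and that, by Theorem \ref{thm:property_j''}, $j$ is of class $C^2$, so that $\mathcal{V} \mapsto j''(\bar{\mathcal{U}})\mathcal{V}^2$ is a continuous quadratic form, homogeneous of degree two.

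\emph{The implication \eqref{eq:second_order_equivalent} $\Rightarrow$ \eqref{eq:second_order_sufficient} is immediate.} First I would observe that $\mathbf{C}_{\bar{\mathcal{U}}} \subseteq \mathbf{C}_{\bar{\mathcal{U}}}^{\tau}$ for every $\tau > 0$: indeed, if $\mathcal{V} \in \mathbf{C}_{\bar{\mathcal{U}}}$ and $|(\boldsymbol\Psi_{t})_{i}| > \tau$, then in particular $(\boldsymbol\Psi_{t})_{i} \neq 0$, so condition \eqref{eq:critical_cone_charac_2} forces $(\mathbf{v}_{t})_{i} = 0$, which is precisely \eqref{eq:v_i_tau}; condition \eqref{eq:sign_cond} is common to both cones. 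Hence, if \eqref{eq:second_order_equivalent} holds with constants $\kappa, \tau > 0$, then for every $\mathcal{V} \in \mathbf{C}_{\bar{\mathcal{U}}} \setminus \{\mathbf{0}\} \subseteq \mathbf{C}_{\bar{\mathcal{U}}}^{\tau}$ we get $j''(\bar{\mathcal{U}})\mathcal{V}^2 \geq \kappa \|\mathcal{V}\|_{[\mathbb{R}^{2}]^{\ell}}^2 > 0$, which is \eqref{eq:second_order_sufficient}.

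\emph{The converse implication is proved by contradiction.} Assume \eqref{eq:second_order_sufficient} holds but \eqref{eq:second_order_equivalent} fails. Then, for each $n \in \mathbb{N}$, choosing $\kappa = \tau = 1/n$, there exists $\mathcal{V}_{n} \in \mathbf{C}_{\bar{\mathcal{U}}}^{1/n}$ with $j''(\bar{\mathcal{U}})\mathcal{V}_{n}^2 < \tfrac{1}{n}\|\mathcal{V}_{n}\|_{[\mathbb{R}^{2}]^{\ell}}^2$. Since $\mathbf{C}_{\bar{\mathcal{U}}}^{1/n}$ is a cone and $j''(\bar{\mathcal{U}})$ is homogeneous of degree two, I may normalize so that $\|\mathcal{V}_{n}\|_{[\mathbb{R}^{2}]^{\ell}} = 1$, whence $j''(\bar{\mathcal{U}})\mathcal{V}_{n}^2 < 1/n$. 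By compactness of the unit sphere of $[\mathbb{R}^{2}]^{\ell}$, a nonrelabeled subsequence satisfies $\mathcal{V}_{n} \to \mathcal{V}$ with $\|\mathcal{V}\|_{[\mathbb{R}^{2}]^{\ell}} = 1$. The key point is that $\mathcal{V} \in \mathbf{C}_{\bar{\mathcal{U}}}$: the sign conditions \eqref{eq:sign_cond} are closed and pass to the limit, while for \eqref{eq:critical_cone_charac_2}, if $(\boldsymbol\Psi_{t})_{i} \neq 0$ then $|(\boldsymbol\Psi_{t})_{i}| > 1/n$ for all $n$ sufficiently large, so $(\mathbf{v}_{t}^{n})_{i} = 0$ by \eqref{eq:v_i_tau}, and letting $n \uparrow \infty$ gives $(\mathbf{v}_{t})_{i} = 0$. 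Finally, continuity of the quadratic form yields $j''(\bar{\mathcal{U}})\mathcal{V}^2 = \lim_{n\to\infty} j''(\bar{\mathcal{U}})\mathcal{V}_{n}^2 \leq 0$, which contradicts \eqref{eq:second_order_sufficient} since $\mathcal{V} \in \mathbf{C}_{\bar{\mathcal{U}}} \setminus \{\mathbf{0}\}$. This contradiction establishes \eqref{eq:second_order_equivalent}.

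\emph{On the main obstacle.} There is no deep difficulty: the result is a finite-dimensional compactness argument of the type used, e.g., in \cite[Section 3.3]{MR3311948}. The only point requiring care is the passage to the limit of condition \eqref{eq:critical_cone_charac_2} in the converse implication, and it is precisely the flexibility of the relaxed cones $\mathbf{C}_{\bar{\mathcal{U}}}^{1/n}$ with $\tau = 1/n \to 0^{+}$ that forces the limiting direction into the smaller critical cone $\mathbf{C}_{\bar{\mathcal{U}}}$. For completeness I would also record that one may take $\kappa$ in \eqref{eq:second_order_equivalent} to be, say, half the minimum of $j''(\bar{\mathcal{U}})\mathcal{V}^2$ over $\{\mathcal{V} \in \mathbf{C}_{\bar{\mathcal{U}}}^{\tau} : \|\mathcal{V}\|_{[\mathbb{R}^{2}]^{\ell}} = 1\}$ for a sufficiently small $\tau > 0$, this minimum being attained and strictly positive by the same compactness reasoning.
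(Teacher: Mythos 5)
Your proof is correct and is exactly the standard finite-dimensional compactness argument that the paper invokes implicitly when it declares the result ``immediate in view of our finite dimensional setting'' without supplying any details: the inclusion $\mathbf{C}_{\bar{\mathcal{U}}}\subseteq\mathbf{C}_{\bar{\mathcal{U}}}^{\tau}$ gives one direction, and the normalization--compactness--limit argument with $\tau=\kappa=1/n$ gives the other, with the passage of \eqref{eq:critical_cone_charac_2} to the limit handled correctly. Nothing is missing.
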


\section{Conclusions}\label{sec:conclusions}

\EO{We have analyzed, in two dimensions, an optimal control problem for the Navier--Stokes equations within a functional framework inherited by Muckenhoupt weights and Muckenhoupt weighted Sobolev spaces. The control variable corresponds to the amplitude of forces modeled as point sources. In view of the nonuniqueness of solutions for the Navier--Stokes equations, we have operated under an assumption guaranteeing local uniqueness of the state equation around optimal controls. With this assumption at hand, we were able to prove that a suitable linearization of the Navier--Stokes equations is well-posed in weighted spaces. We have also proved that the adjoint state equation is well-posed in weighted spaces and derived further regularity properties for its solution. A combination of these results allowed us to obtain first order necessary optimality conditions. Finally, we provided second order necessary and sufficient optimality conditions on suitable cones of critical directions.

This contribution can be seen as a first step for the development of numerical methods for approximating solutions to the aforementioned optimal control problem. In particular, the obtained results could be useful to obtain a priori and a posteriori error estimates for suitable finite element discretizations.}

\begin{acknowledgements}
FF is supported by UTFSM through Beca de Mantenci\'on. FL is partially supported by DIUBB through project 2120173 GI/C Universidad del B\'io--B\'io and ANID through FONDECYT grant 11200529. EO is partially supported by ANID through FONDECYT grant 1220156. DQ is partially supported by ANID/Subdirecci\'on del Capital Humano/Doctorado Nacional/2021--21210988.
\end{acknowledgements}

\footnotesize
\bibliographystyle{acm}
\bibliography{references_sin_url}

\end{document}